\documentclass[11pt]{amsart}
\usepackage{amsmath,amssymb,amsthm}
\usepackage{mathtools}
\usepackage{stmaryrd} 
\setlength{\parindent}{0.9em}
\usepackage{graphicx,setspace}
\usepackage{tikz}
\usetikzlibrary{arrows}
\usepackage{wasysym}
\usepackage[makeroom]{cancel}
\singlespacing
\usepackage{fullpage}
\usepackage{comment}
\usepackage{enumerate}
\usepackage{mathtools}
\usepackage{url}
\usepackage{tipa}
\usepackage{upgreek}
\usepackage{csquotes}
\allowdisplaybreaks

\newcommand{\Zmod}[1]{\mathbb{Z}/#1\mathbb{Z}}
\newcommand{\Emod}[1]{E\left(\Zmod{#1}\right)}
\newcommand{\ENZ}{\Emod{N}}
\newcommand{\zero}{\mathcal{O}}

\newcommand{\characteristic}{\text{char}}

\newcommand{\jacsym}[2]{\left( \frac{#1}{#2} \right)}

\theoremstyle{plain}
\newtheorem{theorem}{Theorem}[section]
\newtheorem{proposition}[theorem]{Proposition}

\newtheorem{lemma}[theorem]{Lemma}
\newtheorem*{notation*}{Notation}
\newtheorem{corollary}[theorem]{Corollary}

\newtheorem*{claim*}{Claim}
\theoremstyle{definition}

\newtheorem{definition}[theorem]{Definition}

\providecommand{\abs}[1]{\left\lvert#1\right\rvert}

\def\N{\mathbb{N}}
\def\Z{\mathbb{Z}}
\def\Q{\mathbb{Q}}

\def\S{\mathbb{S}}
\def\mcO{\mathcal{O}}

\def\ord{\operatorname{ord}}

\baselineskip=17pt

\renewcommand\ord\nu
\usepackage{hyperref}
\DeclareMathOperator{\order}{order}

\DeclareMathOperator{\lcm}{lcm}

\begin{document}

\title{Strongly Non-zero Points and Elliptic Pseudoprimes}

\author[L. Babinkostova]{L. Babinkostova}
\address{Department of Mathematics\\ Boise State University\\
Boise, ID 83725, USA}
\email{liljanababinkostova@boisestate.edu}

\author[D. Fillmore]{D. Fillmore}
\address{Department of Mathematics\\ University of South Carolina\\
Columbia, SC 29208, USA}
\email{Dylanf@email.sc.edu}

\author[P. Lamkin]{P. Lamkin}
\address{Department of Mathematics\\ Carnegie Mellon University\\
Pittsburgh, PA 15213, USA}
\email{plamkin@andrew.cmu.edu}

\author[A. Lin]{A. Lin}
\address{Department of Mathematics\\ Princeton University\\
Princeton, NJ 08544, USA}
\email{adlin@princeton.edu }

\author[C. L. Yost-Wolff]{C. L. Yost-Wolff}
\address{Department of Mathematics\\ Massachusetts Institute of Technology\\
Cambridge, MA 02139, USA}
\email{calvinyw@mit.edu}

\thanks{Supported by the National Science Foundation under the Grant number DMS-1659872.}
\thanks{$^{\S}$ Corresponding Author: liljanababinkostova@boisestate.edu}
\subjclass[2010]{14H52, 14K22, 11Y01, 11N25, 11G07, 11G20, 11B99} 
\keywords{Elliptic curves, pseudoprimes, strongly non-zero points, elliptic pseudoprimes, elliptic Carmichael numbers}

\begin{abstract}
We examine the notion of strongly non-zero points and use it as a tool in the study of several types of  elliptic pseudoprimes introduced in \cite{Gordon-psp}, \cite{Silverman} and \cite{REU2017}. Moreover, we give give some probabilistic results about the existence of strong elliptic pseudoprimes for a randomly chosen point on a randomly chosen elliptic curve.
\end{abstract}
\maketitle

\section{Introduction}

The notion of testing a number for primality has long been an interesting problem in mathematics.  Possibly the most well-known primality test is based on Fermat's Little Theorem: if $p$ is a prime number and $b$ is an integer not divisible by $p$, then $b^{p-1} \equiv 1 \pmod{p}$. However, the converse does not hold: there are composite numbers $N$ and positive integers $1< b<N$ for which $b^{N-1} \equiv 1 \pmod{N}$. We refer the reader to the survey article by C. Pomerance \cite {P} for a nice introduction to primality testing. Clasically, a natural number $N$ is a \textit{pseudoprime} to the base $b$ if $N$ is composite and $b^{N-1}\equiv 1 \bmod{N}$. If $N$ is a pseudoprime for all $b$ with $\gcd(b,N)=1$ then $N$ is called \textit{Carmichael number}.   In \cite{KC}, Korselt characterized these numbers as follows: $N$ is a Carmichael number if and only if $N$ is square-free and $p-1 \mid N - 1$ for every prime $p \mid N$. In 1986, the long-standing conjecture that there are infinitely many Carmichael numbers was proven by Alford, Granville, and Pomerance \cite {AGP}.\\

Since the 1980's, elliptic curves have been used in algorithmic number theory to give deterministic algorithms that are faster than earlier algorithms that did not use elliptic curves. We refer the reader to \cite{Lenstra-factoring} for historical remarks on elliptic curve primality testing. The general framework of elliptic curve primality testing is based on the following fundamental theorem of Goldwasser and Kilian \cite{GK}.
\begin{theorem}{\cite{GK}}
Let $E/\mathbb{Q}$ be an elliptic curve, and let $M$ and $N$ be positive integers with $M>(N^{1/4 + 1})2$ and $N$ is coprime to $\Delta(E)$. Suppose there is a point $P\in E/\mathbb{Q}$ such that $MP$ is zero$\mod N$ and $(M/p)P$ is strongly non-zero$\mod N$ for every prime $p\mid M$. Then $N$ is prime. 
\end{theorem}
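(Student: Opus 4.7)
The plan is to prove the theorem by contradiction: assuming $N$ is composite, exhibit a prime factor $q$ of $N$ small enough that the Hasse bound on $\#E(\F_q)$ contradicts the hypothesis $M > (N^{1/4}+1)^2$. The \emph{strongly non-zero} hypothesis will enter precisely to pin down the exact order of the reduced point.

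Suppose for contradiction that $N$ is composite. Then $N$ has a prime divisor $q$ with $q \leq \sqrt{N}$. Because $\gcd(N,\Delta(E))=1$, the curve $E$ has good reduction at $q$, so we may consider the reduction $\widetilde{P}$ of $P$ in $E(\F_q)$ via the reduction map $E(\Q)\to \ENZ \to \Emod{q}$ (or directly after clearing denominators). The congruence $MP \equiv \zero \pmod{N}$ then forces $M\widetilde{P}=\zero$ in $E(\F_q)$, so the order of $\widetilde{P}$ divides $M$.

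The key input is the notion of \emph{strongly non-zero mod $N$}: by its intended definition (the one that makes the Goldwasser--Kilian test work), a point being strongly non-zero mod $N$ means its reduction modulo every prime divisor of $N$ is nonzero in the corresponding group over $\F_q$. Applying this to the hypothesis that $(M/p)P$ is strongly non-zero mod $N$ for every prime $p\mid M$, we conclude that $(M/p)\widetilde{P}\neq \zero$ in $E(\F_q)$ for every prime $p\mid M$. Combined with $M\widetilde{P}=\zero$, this pins down the order of $\widetilde{P}$ in $E(\F_q)$ to be \emph{exactly} $M$.

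By Lagrange's theorem in the finite group $E(\F_q)$, this forces $M \mid \#E(\F_q)$, and in particular $M \leq \#E(\F_q)$. But the Hasse bound gives
\[
\#E(\F_q) \leq q + 1 + 2\sqrt{q} = (\sqrt{q}+1)^2 \leq (N^{1/4}+1)^2 < M,
\]
using $q\leq \sqrt{N}$ and the hypothesis on $M$. This contradiction shows $N$ must be prime. The main conceptual obstacle is the strongly-non-zero step: one must verify that the definition really does propagate non-vanishing from $\Zmod{N}$ down to each $\F_q$ for $q\mid N$, since without this, $(M/p)P$ could be nonzero mod $N$ while vanishing mod the specific small prime $q$ we need, ruining the order argument. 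Everything else (Hasse, Lagrange, good reduction) is standard.
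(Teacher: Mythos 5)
Your proof is correct, and it is the standard Goldwasser--Kilian argument; the paper itself states this theorem with a citation to \cite{GK} and gives no proof, so there is nothing internal to compare against. Your handling of the one genuinely delicate point is right: with the paper's Definition \ref{def:stronglynonzero}, ``zero mod $N$'' means $N \mid z$, hence $q \mid z$ for every prime $q \mid N$, while ``strongly non-zero mod $N$'' means $\gcd(z,N)=1$, hence $q \nmid z$ for every $q \mid N$; so the order of the reduction $\widetilde{P}$ in $E(\mathbb{F}_q)$ divides $M$ but no $M/p$, and is therefore exactly $M$. Choosing $q \le \sqrt{N}$ and combining Lagrange with Hasse then contradicts $M > (N^{1/4}+1)^2$ (the exponent in the paper's statement is a typo, which you correctly repaired). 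No gaps.
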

Although the original algorithm of Goldwasser-Kilian is no longer used, their result is used as a framework for the ``AKS" primality test,  developed by Agrawal, Kayal, and Saxena in \cite{AKS}, which is the only known algorithm that determines the primality or compositeness of any integer in deterministic polynomial time. \\

In 1992, Gordon introduced the notion of an {\em elliptic pseudoprime} \cite{Gordon-psp} as a natural extension of the definition of a pseudoprime from groups arising from elliptic curves with complex multiplication.
\begin{definition}\cite{Gordon-psp}
Let $E/\Q$ be an elliptic curve with complex multiplication by an order in $\Q(\sqrt{-d})$ and let $P \in E(\mathbb{Q})$ have infinite order. A composite number $N$ is called an \emph{elliptic pseudoprime} if $\jacsym{-d}{N} = -1$, $N$ is coprime to  $\Delta(E)$, and $N$ satisfies $(N+1)P\equiv \zero \pmod{N}$.
\end{definition}
We will use the notation ``G-pseudoprime" to denote Gordon's notion of an elliptic pseudoprime. In \cite{Silverman-Carmichael}, Silverman extends Gordon's  notion of elliptic pseudoprimes by allowing any elliptic curve $E/\Q$, not just elliptic curves with complex multiplication, as well as any $P \in \Emod{N}$. 
\begin{definition}\cite{Silverman-Carmichael}
Let $N \in \mathbb{Z}$, let $E/\mathbb{Q}$ be an elliptic curve, and let $P \in \Emod{N}$. Write the $L$-series of $E/\mathbb{Q}$ as $L(E/\mathbb{Q},s) = \sum_n \frac{a_n}{n^s}$. Then $N$ is an \emph{elliptic pseudoprime} for $(E,P)$ if $N$ has at least two distinct prime factors, $E$ has good reduction at every prime $p$ dividing $N$, and $(N+1-a_N) P \equiv \zero \pmod{N}$.
\end{definition}
{\flushleft {We will use the notation ``S-pseudoprime" to denote Silverman's notion of an elliptic pseudoprime.}} \\

In this paper we study elliptic G- and S- pseudoprimes for strongly non-zero points on the elliptic curve $\Emod{N}$ (Section 3).  Moreover, we give bounds on the number of points on a given elliptic curve for which an odd integer $N$  is a strong elliptic G-pseudoprime and probabilistic results for a given odd integer $N$ being a strong elliptic G- pseudoprime  for a randomly chosen point on a randomly chosen elliptic curve (Section 4). We prove similar results for strong elliptic S-pseudoprimes. Prior to these results we give a brief introduction to elliptic curves and elliptic pseudoprimes (Section 2)
\section{Preliminaries} 
\subsection{Elliptic Curves}
We introduce some elementary features of elliptic curves which are relevant to the topics presented in this paper. We refer the reader to \cite{Silverman} and \cite{Washington} for detailed introduction to elliptic curves. Let $k$ be a field and let $\overline{k}$ denote its algebraic closure. An \emph{elliptic curve} $E$ over a field $k$ is a non-singular {\footnote{\tiny an algebraic curve is said to be non-singular if there is not point on the curve at which all partial derivatives vanish.}} curve with an affine equation of the form

\begin{align} 
	E/k: y^2 + a_1xy + a_3y = x^3 + a_2x^2 + a_4x + a_6     \label{eq1}
\end{align}
where $a_1,a_2,a_3,a_4,a_6 \in k$. An equation of the above form (\ref{eq1}) is called a \emph{generalized Weierstrass} equation.

Recall that the points in projective space $\mathbb{P}^2(k)$ correspond to the equivalence classes in $k^3 - \{(0,0,0)\}$ under the equivalence relation $(x, y, z) \sim (ux, uy, uz)$ with $u\in k^{\times}$. The equivalence class containing $(x,y,z)$ is denoted by $[x:y:z]$. The projective equation corresponding to the affine equation (\ref{eq1}) is the homogeneous equation
\begin{align}
	E/k: y^2z + a_1xyz + a_3yz^2 = x^3 + a_2x^2z + a_4xz^2 + a_6z^3,  \label{eq}
\end{align}
where $a_1,a_2,a_3,a_4,a_6 \in \overline{k}$.  

The point $[0 : 1 : 0]$ is called the \emph{point at infinity} and is denoted by $\mathcal{O}$. The projective points of $E$ over $\overline{k}$ form an abelian group with $\mathcal{O}$ as the identity.

If $\characteristic(k)\neq 2,3 $, then the equation of $E$ can be written as
\begin{align*}
	E/k: y^2 = x^3 + Ax + B  \label{eq2}
\end{align*} 
where $A, B \in k$.

An elliptic curve $E/k: y^2z = x^3+Axz^2+Bz^3$ is non-singular if and only if its discriminant, $4A^3+27B^2$, is nonzero.  Associated to $E/\mathbb{Q}$ is the $L$-function $L(E,s)$, which is defined as the Euler product
	\begin{align*}
		L(E,s) = \prod_p \frac{1}{1- a_p p^{-s} + 1_E(p) p^{1-2s}}
	\end{align*}
	where
	\begin{align*}
	1_E(p) = \begin{cases} 1 &\text{if } E \text{ has good reduction at } p \\
													0 &\text{otherwise} \end{cases}
\end{align*}
 and $a_p = p+1-\#\Emod{p}$ whether or not $E$ has good reduction at $p$. Alternatively expressing $L(E,s)$ as the Dirichlet series $L(E,s) = \sum_n \frac{a_n}{n^s}$, the map sending a positive integer $n$ to the coefficient $a_n$ is a multiplicative function with
\begin{align*}
	a_{1} &= 1 \\
	a_{p^e} &= a_p a_{p^{e-1}} - 1_E(p) p a_{p^{e-2}} \qquad \text{for all } e \geq 2.
\end{align*}
See \cite[Chapter 8.3]{DiamondShurman} and \cite[Appendix C, Section 16]{Silverman} for more on $L$-series of elliptic curves. \par

An elliptic curve $E/\mathbb{Z}/N\mathbb{Z}$ is the set of solutions $[x:y:z]$ (requiring that $\gcd(x, y, z, N)=1$) in projective space over $\mathbb{Z}/N\mathbb{Z}$ to a Weierstrass equation
\begin{align*} 
	E/k: y^2 + a_1xy + a_3y = x^3 + a_2x^2 + a_4x + a_6    
\end{align*}
where the discriminant $\Delta$ has no prime factor in common with $N$. There is a group law on $\Emod{N}$ given by explicit formulae which can be computed (see \cite{Washington}). For a given elliptic curve $E/\mathbb{Q} : y^2=x^3 +Ax+B$ where $A, B,N \in \mathbb{Z}$ with $N$ positive odd integer such that $\gcd(N, 4A^3+ 27B^2)=1$ there is a group homomorphism from $E/\mathbb{Q}$ to $\Emod{N}$ by representing the points in $E/\mathbb{Q}$ as triples $[x:y:z]\in \mathbb{P}^2(k)$.

If the prime factorization of $N$ is $N = p_1^{e_1} \cdots p_k^{e_k}$ then $\Emod{N}$ is isomorphic as a group to the direct product of elliptic curve groups 
$$ \Emod{N} \simeq \Emod{p_1^{e_1}} \oplus \cdots \oplus \Emod{p_k^{e_k}}.$$
In particular, if we let $E_i$ be the reduction of $E$ modulo $p_i$, then $E_i$ is an elliptic curve over the field $\mathbb{F}_{p_i}$. It is known that
$$\#E(\mathbb{Z}/p_i^{e_i}\mathbb{Z})=p_i^{{e_i}-1}\#E_i(\mathbb{F}_{p_i})$$
We refer the reader to \cite{{Lenstra-factoring}, {Washington}} for details about elliptic curves over $\mathbb{Z}/N\mathbb{Z}$.

\subsection{Elliptic Pseudoprimes}
n this section we give some background on elliptic pseudoprimes in general. For other articles that study elliptic pseudoprimes and related notions see \cite{{Gordon-psp}, {Gordon-number}, {EPT}, {Ekstrom}, {Muller}, {Silverman-Carmichael}}. 
\begin{definition}\cite{Gordon-psp}\label{GordonEP}
Let $E/\Q$ be an elliptic curve with complex multiplication in $\Q(\sqrt{-d})$, let $P$ be a point in $E$ of infinite order, and let $N$ be a composite number with $\gcd(N,6\Delta) = 1$. Then, $N$ is an \emph{elliptic pseudoprime} for $(E,P)$ if $\jacsym{-d}{N} =-1$ and 
$$(N+1) P \equiv \mathcal{O}\pmod N\footnote{\tiny For details on computing multiples of points in elliptic curve modulo $N$, see \cite[Chapter 3.2]{W1} or Appendix \ref{SectionMult}.}$$
\end{definition}

In \cite{Silverman-Carmichael}, Silverman extends Gordon's aforementioned notion of elliptic pseudoprimes by allowing any elliptic curve $E/\Q$, not just elliptic curves with complex multiplication, as well as any $P \in \Emod{N}$. 
\begin{definition}\cite{Silverman-Carmichael}\label{SilvermanEP}
Let $N \in \mathbb{Z}$, let $E/\mathbb{Q}$ be an elliptic curve, and let $P \in \Emod{N}$. Write the $L$-series of $E/\mathbb{Q}$ as $L(E/\mathbb{Q},s) = \sum_n \frac{a_n}{n^s}$. Then $N$ is an \emph{elliptic pseudoprime} for $(E,P)$ if $N$ has at least two distinct prime factors, $E$ has good reduction at every prime $p$ dividing $N$, and $(N+1-a_N) P \equiv \zero \pmod{N}$.
\end{definition}
It is not hard to check that for (most) $N$, $\jacsym{-d}{N} = -1$ and $N$ is square-free if and only if $a_N = 0$. Thus, $(n + 1 - a_N)P = (n + 1)P$, so (most) elliptic pseudoprimes in Gordon's sense are also pseudoprimes in Silverman's sense.

\begin{definition}{\cite{Gordon-psp}}
Let $E/\Q$ be an elliptic curve. A composite number $N$ with $\gcd\left(N, 6\Delta\right) = 1$ is an elliptic \emph{G-pseudoprime} for the curve $E/\Q$ with complex multiplication by the field $K = \Q\left(\sqrt{-d}\right)$ and a point $P \in E\left(\Q\right)$ of infinite order if $\left(\frac{-d}{N} \right) = -1$ and
\[
	\left(N + 1\right)P \equiv \mcO \bmod{N}.
\]
\end{definition}

\begin{definition}{\cite{Gordon-psp}}
\label{def:strong G-psp}
Let $E/\Q$ be an elliptic curve with complex multiplication. Suppose $N$ is a composite number with $\gcd\left(N, 6\Delta\right) = 1$. Write $N+1 = 2^st$ where $t$ is odd. Then $N$ is called a  \emph{strong elliptic G-pseudoprime} for a curve $E$ with complex multiplication by $K = \Q\left(\sqrt{-d}\right)$ and a point $P \in E\left(\Q\right)$ with infinite order if $\left(\frac{-d}{N} \right) = -1$ and either
\begin{enumerate}
\renewcommand{\labelenumi}{(\roman{enumi})}
\item $tP \equiv \mcO \bmod{N}$, or
\item $\left(2^rt\right)P \equiv \left(x:0:1\right) \bmod{N}$ for some $0 \leq r \leq s-1$ and some $x \in \Z/N\Z$.
\end{enumerate}
\end{definition}

\begin{definition}{\cite{Gordon-psp}}
Let $E/\Q$ be an elliptic curve. A composite number $N$ is an elliptic \emph{(strong) G-Carmichael number for $E$} if it is a (strong) G-pseudoprime for $E$ at all points $P \in E\left(\Z/N\Z\right)$.
\end{definition}

\begin{definition}{\cite{Silverman-Carmichael}}
Let $E/\Q$ be an elliptic curve and it's associated L-series be $L(E,s) = \sum_{n \ge 1} a_n/n^s$. A composite number $N$ is an elliptic \emph{S-pseudoprime} for $E/\Q$ and a point $P \in E\left(\Z/N\Z\right)$ if $N$ has at least two distinct prime factors, $E$ has good reduction at every prime $p \mid N$, and
\[
	\left(N + 1 - a_N\right)P \equiv \mcO \bmod{N}.
\]
\end{definition}
In \cite{REU 2017}, the authors extend the notion of a strong elliptic G-pseudoprime by considering non-CM curves.
\begin{definition} {\cite{REU2017}}\label{def:strong S-psp}
Let $E/\Q$ be an elliptic curve and its associated L-series be $L(E,s) = \sum_{n \ge 1} a_n/n^s$. Let $N$ be an integer, and let $P$ be a point in $E(\Z/N\Z)$. Write $N+1-a_N = 2^st$, where $t$ is odd. Then, $N$ is a \emph{strong elliptic S-pseudoprime} for $(E,P)$ if $N$ has at least two distinct prime factors, $E/\Q$ has good reduction at every prime $p \mid N$, and one of the following holds:
\begin{enumerate}[(i)]
\item $tP \equiv \mcO \bmod{N}$, or
\item $\left(2^rt\right)P \equiv \left(x:0:1\right) \bmod{N}$ for some $0 \leq r \leq s-1$ and some $x \in \Z/N\Z$.
\end{enumerate}
\end{definition}
From these definitions of S-pseudoprimes for a specific point $P$ on a curve $E$, it is natural to extend the idea of Carmichael numbers for the group $(\Z/N\Z)^\times$ to Carmichael numbers for the group $E(\Z/N\Z)$. 
\begin{definition}[{\cite{Silverman-Carmichael},  \cite{REU2017}}]
Let $E/\Q$ be an elliptic curve. A composite number $N$ is a \emph{(strong) elliptic S-Carmichael number for $E/\Q$} if it is a (strong) elliptic S-pseudoprime for $E/\Q$ at all points $P \in E\left(\Z/N\Z\right)$.
\end{definition}

\section{Strongly Nonzero Points and Elliptic Pseudoprimes}
In this section we use the notion of strongly non-zero points and use it as a tool for examining G- and S- elliptic Carmichael numbers. 
\begin{definition}\label{def:stronglynonzero}
Let $P = (x : y : z)$ be a projective point on an elliptic curve $E/\Q$, where $x, y, z \in \Z$, and let $N$ be a nonzero integer. If $z = 0 \mod N$ then the point $P$ is said to be \textit{zero$\mod N$}; otherwise, $P$ is \textit{non-zero$\mod N$}. If $gcd(z, N) =1$ then the point $P$ is said to be \textit{strongly non-zero$\mod N$}. 
\end{definition}

Note that if $P$ is strongly non-zero$\mod N$, then $P$ is non-zero$\mod p$ for every prime $p\vert N$. When $N$ is prime, the notions of nonzero and strongly non-zero coincide. 

\begin{lemma}\label{lemma:order1}
 Let $Q$ be a strongly non-zero point on the elliptic curve $E\left(\Z/N\Z\right)$. 
 Consider the group decomposition
\[
	E\left(\mathbb{Z}/{N\mathbb{Z}}\right) \cong \bigoplus_{p \mid N} E\left(\mathbb{Z}/{p^{\nu_p\left(N\right)}\mathbb{Z}}\right).
\]
where $\nu_p\left(N\right)$ denotes the $p$-adic valuation of $N$. Let $Q_p\in E\left(\Z/p^{\nu_p\left(N\right)}\Z\right)$ denote the point corresponding to $Q$ for a prime $p \mid N$. Then $Q_p$ is a strongly non-zero point$\mod p^{\nu_p(N)}$ for all $p \mid N$.
\end{lemma}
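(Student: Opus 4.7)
The plan is to unwind the definitions and use the fact that the isomorphism $E(\Z/N\Z) \cong \bigoplus_{p \mid N} E(\Z/p^{\nu_p(N)}\Z)$ is given coordinate-wise by reduction modulo $p^{\nu_p(N)}$, combined with the Chinese Remainder Theorem on coordinates. The strong non-zeroness condition is a statement about the $z$-coordinate of a projective representative, so the argument reduces to tracking what happens to that coordinate under componentwise reduction.

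First I would fix a projective representative $Q = (x : y : z)$ with $x, y, z \in \Z$ such that $\gcd(z, N) = 1$, which is possible by the assumption that $Q$ is strongly non-zero$\mod N$ in the sense of Definition~\ref{def:stronglynonzero}. Next, I would recall that the isomorphism in the statement sends $Q$ to the tuple of reductions: for each prime $p \mid N$, the component $Q_p \in E(\Z/p^{\nu_p(N)}\Z)$ has the representative
\[
Q_p = \bigl( x \bmod p^{\nu_p(N)} : y \bmod p^{\nu_p(N)} : z \bmod p^{\nu_p(N)} \bigr).
\]
I would briefly check that this is a well-defined projective point in $E(\Z/p^{\nu_p(N)}\Z)$, namely that $\gcd\!\bigl(x, y, z, p^{\nu_p(N)}\bigr) = 1$; this follows immediately because $z$ itself is coprime to $p^{\nu_p(N)}$ (see the next step).

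The crucial step is then the divisibility observation: since $\gcd(z, N) = 1$ and $p \mid N$, we have $p \nmid z$, so $\gcd\!\bigl(z, p^{\nu_p(N)}\bigr) = 1$. Hence the $z$-coordinate of the representative of $Q_p$ just exhibited is a unit modulo $p^{\nu_p(N)}$, which by Definition~\ref{def:stronglynonzero} means that $Q_p$ is strongly non-zero$\mod p^{\nu_p(N)}$, completing the proof.

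The only real subtlety — and the closest thing to an obstacle — is being careful that ``strongly non-zero'' is a property of the projective class, not of a specific representative, and that the chosen representative with $\gcd(z,N) = 1$ does descend under the coordinate-wise CRT map to a genuine representative of $Q_p$ (as opposed to something with all three coordinates divisible by $p$). Since we picked $z$ to be a unit mod $N$, this is automatic: no scaling is needed and the reduction is clean. Everything else is a direct application of definitions.
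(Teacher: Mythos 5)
Your proof is correct and follows essentially the same route as the paper's: both arguments pick a representative of $Q$ whose $z$-coordinate is a unit modulo $N$ (the paper normalizes to $z=1$, you keep $\gcd(z,N)=1$), observe that the component $Q_p$ is obtained by coordinate-wise reduction, and conclude from $\gcd\left(p^{\nu_p(N)}, z\right) \mid \gcd(N,z) = 1$ that the reduced $z$-coordinate is a unit modulo $p^{\nu_p(N)}$. No substantive differences.
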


\begin{proof}
Since $Q$ is strongly non-zero point, we may write $Q=\left(x:y:z \right)$ with $z=1$. 
Then $Q_p=\left(x \bmod p^{\ord_p\left(N\right)}: y \bmod p^{\ord_p\left(N\right)}: z \bmod p^{\ord_p\left(N\right)}\right)$. Note that for any integer $k>0$, 
\begin{align*}
  \gcd\left(p^{\ord_p\left(N\right)},z + kp^{\ord_p\left(N\right)}\right)= \gcd\left(p^{\ord_p\left(N\right)},z\right).
\end{align*}
Also, since $p^{\nu_p\left(N\right)} \mid N$,
\begin{align*}
  \gcd\left(p^{\ord_p\left(N\right)},z\right) \mid \gcd\left(N,z\right) = 1
\end{align*}
Thus $\gcd\left(p^{\ord_p\left(N\right)},z \bmod p^{\ord_p\left(N\right)}\right) =1$, which implies that $Q_p$ is strongly non-zero point$\mod p^{\nu_p(N)}$.
\end{proof}

\begin{corollary}\label{lemma:order2}
Let $Q$ be a point in $E(\Z/N\Z)$, and let $Q_p$ as defined above. Then $Q$ is a zero point$\mod N$ if and only if there exists a prime $p \mid N$ such that $Q_p$ is a zero point$\mod N$.
\end{corollary}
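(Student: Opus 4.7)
The plan is to rephrase each side of the biconditional as a divisibility statement on the $z$-coordinate of an integer representative, then to route everything through the CRT-style isomorphism $E(\Z/N\Z)\cong\bigoplus_{p\mid N}E(\Z/p^{\nu_p(N)}\Z)$ recorded in the Preliminaries. Writing $Q=(x:y:z)$ with $x,y,z\in\Z$ and using the representative $Q_p=(x\bmod p^{\nu_p(N)}:y\bmod p^{\nu_p(N)}:z\bmod p^{\nu_p(N)})$ from the proof of Lemma~\ref{lemma:order1}, Definition~\ref{def:stronglynonzero} unpacks each statement: ``$Q$ is a zero point$\bmod N$'' becomes $N\mid z$, while ``$Q_p$ is a zero point'' (as an element of $E(\Z/p^{\nu_p(N)}\Z)$) becomes $p^{\nu_p(N)}\mid z$.

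For the forward direction ($\Rightarrow$), suppose $N\mid z$. Since $N=\prod_{p\mid N}p^{\nu_p(N)}$ and the prime power factors pairwise divide $N$, we have $p^{\nu_p(N)}\mid z$ for every $p\mid N$; since $N>1$ has at least one prime divisor, there exists such a $p$ with $Q_p$ a zero point, which is the asserted existential.

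For the reverse direction ($\Leftarrow$), I would exploit the direct sum decomposition, under which $Q$ corresponds to the tuple $(Q_p)_{p\mid N}$ and the identity $\mathcal{O}$ corresponds to the all-identity tuple. The existence of some $p\mid N$ with $Q_p=\mathcal{O}$ pins down the divisibility $p^{\nu_p(N)}\mid z$ at that prime, which one then wants to combine with information from the other primes to conclude $N\mid z$. The main obstacle here is that a single vanishing tuple coordinate is not sufficient on its own to force $Q=\mathcal{O}$; one must read ``$Q_p$ zero$\bmod N$'' in the contrapositive spirit of the Note preceding Lemma~\ref{lemma:order1}, namely as the failure of the strongly non-zero condition at $p$. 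Under that reading, the biconditional becomes $\gcd(z,N)>1$ iff there exists $p\mid N$ with $p\mid z$, which is an immediate gcd identity and follows directly from the multiplicativity $\gcd(z,N)=\prod_{p\mid N}\gcd(z,p^{\nu_p(N)})$ together with Lemma~\ref{lemma:order1}. Thus I expect the cleanest proof to either (i) invoke Lemma~\ref{lemma:order1} and its contrapositive under this interpretation, or (ii) re-derive both directions from the divisibility characterization above together with the observation that $N\mid z \iff \forall p\mid N:\ p^{\nu_p(N)}\mid z$, which for a single prime power factor collapses the existential and universal quantifiers.
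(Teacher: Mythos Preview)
The paper supplies no proof for this corollary, so there is no detailed argument to compare against. Your diagnosis is nonetheless exactly right: under the literal Definition~\ref{def:stronglynonzero} (``zero'' meaning $z\equiv 0\bmod N$), the $\Leftarrow$ direction is simply false, since the vanishing of a single component $Q_p$ does not force $Q=\mathcal{O}$. The paper's own subsequent usage resolves the ambiguity in the way you anticipate: in the proof of Corollary~\ref{lemma:order4} the only ``non-zero'' point of $E(\Z/p\Z)$ is declared to be $\mathcal{O}$, and in the proof of Lemma~\ref{lemma:order10} the present corollary is invoked to conclude that a point all of whose components are strongly non-zero is itself strongly non-zero. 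So ``zero'' here must be read as ``not strongly non-zero,'' i.e.\ $\gcd(z,N)>1$. Under that reading the statement becomes the equivalence $\gcd(z,N)>1\iff\exists\,p\mid N$ with $p\mid z$; one direction is the contrapositive of Lemma~\ref{lemma:order1} and the other is the elementary converse supplied by your $\gcd$ identity. Your option~(i) is exactly what the ``Corollary'' label is pointing to.
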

Throughout the rest of the section we consider the case when $E(\Z/{N\Z})$ has strongly non-zero points$\mod N$.

\begin{proposition}\label{lemma:order3}
Let $E(\Z/p^m\Z)$ be an elliptic curve and $Q\in E(\Z/p^m\Z)$ a point. Let $\sigma_{m,n}: E\left(\Z/p^m\Z\right) \rightarrow E\left(\Z/p^n\Z\right)$, $m \geq n$ be the homomorphism given by  $$\sigma_{m,n}\left(Q\right)=\left(x \bmod p^n:y \bmod p^n:z \bmod p^n\right)$$ Then $\sigma_{m,n}\left(Q\right)$ is a non-zero point  in  $E\left(\Z/p^n\Z\right)$ if and only if $Q$ is a non-zero point in  $E\left(\Z/p^m\Z\right)$.
\end{proposition}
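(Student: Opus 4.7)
The plan is to unfold Definition~\ref{def:stronglynonzero} so that both sides of the biconditional reduce to divisibility of the single integer $z$ by a power of $p$, and then dispatch each direction by elementary divisibility combined with the curve equation. Fix a representative $Q = (x : y : z)$ with $\gcd(x, y, z, p^m) = 1$, as required in the paper's definition of $E(\Z/p^m\Z)$. By Definition~\ref{def:stronglynonzero}, $Q$ is non-zero$\bmod p^m$ precisely when $p^m \nmid z$, and $\sigma_{m,n}(Q)$ is non-zero$\bmod p^n$ precisely when $p^n \nmid z$; so the proposition reduces to the numerical claim $p^m \nmid z \iff p^n \nmid z$.

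One direction is immediate from $n \leq m$: if $p^m \mid z$ then $p^n \mid z$, and the contrapositive gives that $p^n \nmid z$ implies $p^m \nmid z$. This handles the implication $\sigma_{m,n}(Q)$ non-zero $\Rightarrow Q$ non-zero.

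For the reverse implication, assume $p^m \nmid z$ and aim to show $p^n \nmid z$. Set $a = \nu_p(z)$, $b = \nu_p(y)$, $c = \nu_p(x)$; the coprimality hypothesis forces $\min(a, b, c) = 0$. Substituting into $y^2 z + a_1 xyz + a_3 y z^2 = x^3 + a_2 x^2 z + a_4 x z^2 + a_6 z^3$ viewed modulo $p^m$ and comparing $p$-adic valuations of the two sides, one splits on whether $c = 0$ (the right has valuation $0$ while the left has valuation $\geq a$, forcing $a = 0$) or $b = 0$ with $c \geq 1$ (the left has valuation $a$ while the right has $\min(3c, c+2a, 3a)$, and matching these modulo $p^m$ leaves only $a = 0$ or $a \geq m$, the latter contradicting $p^m \nmid z$). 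In either case $a = 0 < n$, so $p^n \nmid z$ as desired. The main obstacle will be the valuation bookkeeping when some Weierstrass coefficients $a_i$ are themselves divisible by $p$: this shifts the valuations of the cross-terms upward and could in principle allow additional matchings, so one has to verify that after accounting for such shifts, the only values of $a$ compatible with the equation in $\Z/p^m\Z$ remain $0$ and those at least $m$.
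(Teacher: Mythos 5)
Your reduction of the statement to the numerical claim ``$p^m \nmid z \iff p^n \nmid z$'' is the correct unfolding of Definition~\ref{def:stronglynonzero}, and the easy direction is fine. The gap is in the reverse direction: the dichotomy you assert --- that the Weierstrass equation together with $\gcd(x,y,z,p^m)=1$ forces $\nu_p(z)=0$ or $\nu_p(z)\ge m$ --- is false. Carry out your own valuation bookkeeping in the case $b=\nu_p(y)=0$, $c=\nu_p(x)\ge 1$: the left-hand side has valuation exactly $a=\nu_p(z)$, while the right-hand side has valuation $3c$ whenever $c<a$, so the congruence mod $p^m$ is also satisfied by $a=3c$ for any $c\ge 1$ with $3c<m$. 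Such points genuinely exist: they are the nontrivial elements of the kernel of reduction $E_1=\ker\bigl(E(\Z/p^m\Z)\to E(\Z/p\Z)\bigr)$, which in the formal-group parametrization take the form $(t:1:t^3u)$ with $\nu_p(t)=1$ and $u$ a unit, hence $\nu_p(z)=3$. For $m\ge 4$ and $n=3$ such a point is non-zero $\bmod\ p^m$ while its image under $\sigma_{m,3}$ is zero $\bmod\ p^3$, so the biconditional as literally stated fails; the obstacle is not the one you flagged (Weierstrass coefficients divisible by $p$) but an entire family of admissible valuations your case analysis omits, and no refinement of that analysis can close it.

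What is true, and what the paper's own proof actually establishes, is the \emph{strongly} non-zero version: whether $p\mid z$ is unchanged by replacing $z$ with $z\bmod p^n$ (for $n\ge 1$), so $\sigma_{m,n}(Q)$ is strongly non-zero $\bmod\ p^n$ if and only if $Q$ is strongly non-zero $\bmod\ p^m$. That statement is pure divisibility, needs no input from the curve equation, and is the version the paper invokes downstream (e.g., at the end of the proof of Lemma~\ref{lemma:order7}). If you replace ``non-zero'' by ``strongly non-zero'' throughout, your opening paragraph already contains a complete proof, and the entire second half of your argument can be discarded.
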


\begin{proof}
Write $Q=\left(x:y:z\right)$. Then $\sigma_{m,n}\left(Q\right)=\left(x \bmod p^n:y \bmod p^n:z \bmod p^n\right)$. Then for any integer $k$, $p\mid \left(z - kp^n\right)$ if and only if $p\mid z$.  Since $p$ is prime, for any integer $i>0$, if $\gcd\left(p^i,z\right)>1$, then $p\mid z$. It follows that $\gcd\left(p^n,z \bmod p^n\right)>1$ if and only if $\gcd\left(p^m,z \bmod p^n\right)>1$.
\end{proof}

\begin{corollary}\label{lemma:order4}
 If $Q$ is a non-zero point on the elliptic curve $E\left(\Z/p^m\Z\right)$, then $\vert Q \vert=p^k$ for some integer $k < m$.
\end{corollary}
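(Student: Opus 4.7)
The statement only makes sense if ``non-zero'' is understood as ``in the kernel of reduction modulo $p$ but not the identity,'' i.e., $p \mid z$ yet $z \not\equiv 0 \pmod{p^m}$; otherwise a strongly non-zero point already in $E(\F_p)$ could have any order coprime to $p$, contradicting the claim. Under this reading, my plan is to place $Q$ in $\ker\bigl(E(\Z/p^m\Z) \to E(\F_p)\bigr)$ and invoke Lagrange's theorem using the order of this kernel recorded in Section 2.

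Step 1: Normalize $Q = (x:y:z)$ so that $\gcd(x,y,z,p^m) = 1$ and suppose $p \mid z$. Reducing the Weierstrass relation $y^2 z = x^3 + Axz^2 + Bz^3$ modulo $p$ yields $x^3 \equiv 0 \pmod p$, so $p \mid x$, and then the coprimality condition forces $p \nmid y$. Consequently $Q$ reduces modulo $p$ to $(0 : y \bmod p : 0)$, which after scaling by the unit $(y \bmod p)^{-1}$ is $[0:1:0] = \mathcal{O}$ in $E(\F_p)$. Hence $Q$ lies in the kernel of reduction.

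Step 2: The formula $\#E(\Z/p^m\Z) = p^{m-1}\cdot \#E(\F_p)$ stated in Section 2, combined with surjectivity of the reduction map (a standard Hensel-lifting argument for smooth curves), shows that this kernel has order exactly $p^{m-1}$. By Lagrange's theorem, $|Q|$ divides $p^{m-1}$, so $|Q| = p^k$ for some $0 \le k \le m-1 < m$, as required.

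The main obstacle is purely interpretational: pinning down the intended meaning of ``non-zero'' so that the conclusion holds. Once that is settled, the argument is essentially Lagrange applied to a subgroup whose order has already been quoted, with the short Weierstrass equation doing the one-line work of placing $Q$ into that subgroup.
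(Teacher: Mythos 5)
Your proof is correct and is essentially the paper's argument: both identify the points in question with $\ker\bigl(E(\Z/p^m\Z)\to E(\Z/p\Z)\bigr)$, note that this kernel has order $p^{m-1}$ (you via the formula $\#E(\Z/p^m\Z)=p^{m-1}\#E(\F_p)$ and surjectivity, the paper via the filtration with successive quotients $\ker(\sigma_{f,f-1})\cong\Z/p\Z$), and conclude by Lagrange. Your interpretational caveat is also on target: the paper's own proof tacitly reads ``non-zero'' as ``reduces to $\mathcal{O}$ modulo $p$,'' i.e.\ $p\mid z$ (it asserts that the only such point of $E(\Z/p\Z)$ is $\mathcal{O}$ and identifies these points with $\ker(\sigma_{m,1})$), which conflicts with the literal wording of Definition~\ref{def:stronglynonzero}; your explicit Weierstrass-equation check that $p\mid z$ forces reduction to $\mathcal{O}$ is a detail the paper leaves implicit.
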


\begin{proof}
Let $\sigma_{m,1}: E\left(\Z/p^m\Z\right) \rightarrow E\left(\Z/p\Z\right)$ be the homomorphism as in Lemma \ref{lemma:order3}. Note that the only non-zero point in $E\left(\Z/p\Z\right)$ is the identity $\mcO$.
By Lemma \ref{lemma:order3}, $\ker\left(\sigma_{m,1}\right)$ is the set of all non-zero points in $E\left(\Z/p^m\Z\right)$. Also, from $E_{m-1}/E_m \cong \ker(\sigma_{m,m-1})$ and $\ker(\sigma_{f,f-1}) \cong \Z/p\Z$ we have that $\abs{\ker\left(\sigma_{m,1}\right)}=p^{m-1}$. This implies that that $\vert Q\vert \mid p^{m-1}$. Thus $\vert Q\vert=p^k$ for some integer $0 \le k < m$. 
\end{proof}
\begin{lemma}\label{lemma:order6}
Let $Q$ be a non-zero point on the elliptic curve $E\left(\Z/p^n\Z\right)$ and $k$ coprime to $p$. Then there exists a strongly non-zero point $P \in E\left(\Z/p^n\Z\right)$ such that $kP=Q$ if and only if there exists a strongly nonzero point $P' \in E\left(\Z/p^n\Z\right)$ with $\vert P'\vert $ dividing $k$.
\end{lemma}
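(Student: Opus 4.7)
The plan is to view $E(\Z/p^n\Z)$ through the reduction-mod-$p$ map $\sigma_{n,1}\colon E(\Z/p^n\Z)\to E(\Z/p\Z)$. From the proof of Corollary \ref{lemma:order4}, the kernel $\ker(\sigma_{n,1})$ has order $p^{n-1}$ and contains precisely the non-zero points, whereas strongly non-zero points are exactly those that reduce to a non-identity element of $E(\Z/p\Z)$. In particular $|Q|=p^a$ for some $a\ge 0$, and multiplication by $k$ is an automorphism of the finite $p$-group $\ker(\sigma_{n,1})$ since $\gcd(k,p)=1$.

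For the forward implication, suppose $P$ is strongly non-zero with $kP=Q$. Write $|P|=p^b m$ with $\gcd(m,p)=1$. The standard order formula gives $|kP|=p^b m/\gcd(m,k)$, and comparing this to $|Q|=p^a$ by $p$-part versus prime-to-$p$ part forces $b=a$ and $m\mid k$. Set $P':=p^a P$, so $|P'|=m\mid k$. To conclude that $P'$ is strongly non-zero, it suffices to verify $P'\neq \mcO$, because $|P'|=m$ is coprime to $p$ while $\ker(\sigma_{n,1})$ is a $p$-group. If instead $P'=\mcO$, then $|P|$ is a power of $p$, so $\sigma_{n,1}(P)$ has order $p^c$ for some $c\ge 0$; the case $c=0$ contradicts $P$ being strongly non-zero, and $c\ge 1$ makes $\sigma_{n,1}(Q)=k\cdot\sigma_{n,1}(P)$ have order $p^c$ (since $\gcd(k,p)=1$), contradicting $Q\in\ker(\sigma_{n,1})$.

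For the reverse implication, suppose $P'$ is strongly non-zero with $|P'|\mid k$. As noted above, multiplication by $k$ is an automorphism of $\ker(\sigma_{n,1})$, so there exists $Q_1\in\ker(\sigma_{n,1})$ with $kQ_1=Q$. Setting $P:=Q_1+P'$ yields $kP=kQ_1+kP'=Q+\mcO=Q$, while $\sigma_{n,1}(P)=\sigma_{n,1}(Q_1)+\sigma_{n,1}(P')=\mcO+\sigma_{n,1}(P')\neq\mcO$, certifying that $P$ is strongly non-zero.

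The delicate step is the verification that $P'\neq\mcO$ in the forward direction; this uses both that $Q$ itself lies in $\ker(\sigma_{n,1})$ and the invertibility of multiplication by $k$ on elements of order coprime to $p$. Everything else reduces to elementary manipulations of orders in the finite abelian group $E(\Z/p^n\Z)$.
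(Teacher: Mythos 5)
Your proof is correct. The reverse implication is essentially the paper's own argument: the paper chooses $y$ with $ky\equiv 1\bmod p^n$ and sets $P=yQ+P'$, which is precisely your $P=Q_1+P'$ with $Q_1=yQ$ serving as the preimage of $Q$ under the multiplication-by-$k$ automorphism of the $p$-group $\ker(\sigma_{n,1})$. The forward implication is where you genuinely diverge. The paper reuses the same inverse $y$ and sets $P'=P-yQ$, so that $kP'=kP-kyQ=Q-Q=\mcO$ yields $\vert P'\vert\mid k$ at once, and strong non-zeroness of $P'$ is automatic because one is subtracting an element of $\ker(\sigma_{n,1})$ from a point outside that kernel. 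You instead take $P'=p^aP$, the prime-to-$p$ component of $P$, which requires the order bookkeeping $\vert P\vert=p^am$ with $m\mid k$ and, more importantly, a separate check that $P'\neq\mcO$ --- the step you rightly flag as delicate and correctly dispose of by reducing modulo $p$ and using that $\gcd(k,p)=1$ preserves the order of $\sigma_{n,1}(P)$. Both routes are elementary and valid; the paper's single trick treats the two directions uniformly and avoids your case analysis, while your construction gives the slightly sharper conclusion that $\vert P'\vert$ equals exactly the prime-to-$p$ part of $\vert P\vert$ rather than merely some divisor of $k$. Your reading of the paper's (inconsistently stated) terminology --- non-zero points are the elements of $\ker(\sigma_{n,1})$, strongly non-zero points are those with non-trivial image in $E(\Z/p\Z)$ --- is the one needed to make the lemma and its neighbors cohere, and your argument is consistent with it throughout.
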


\begin{proof}
Let $Q$, $E\left(\Z/p^n\Z\right)$ and $k$ be given. Let $P' \in E(\Z/p^n\Z)$ a strongly non-zero point such that $\vert P'\vert$ divides $k$. Since $\gcd(k,p^n) = 1$, there exists a positive integer $y$ such that $ky \equiv 1 \bmod p^n$. Let $P= yQ+P'$. Thus by Corollary \ref{lemma:order4}, $P$ is a strongly non-zero point. Note that
\[
  kP= kyQ+kP'= kyQ + \mcO = kyQ = Q
\] 
Conversely, assume that $P$ with $kP=Q$ is a a strongly nonzero point. Let $y$ be a positive integer such that $ky = 1 \bmod p^n$. Let $P' = P-yQ$.  By Corollary \ref{lemma:order4}, 
$P'$ is a strongly non-zero point. Note that
\[
  kP'= kP-kyQ= Q - Q = \mcO
\]
Therefore $\vert P' \vert |k$.
\end{proof}

\begin{lemma}\label{lemma:order7}          
Let $Q$ be a non-zero point in $E\left(\Z/p^n\Z\right)$ and let  $\sigma_{n,n-1}: E\left(\Z/p^n\Z\right) \rightarrow E\left(\Z/p^{n-1}\Z\right)$ be the natural homomorphism. Let $k$ be an integer and $P' \in E\left(\Z/p^{n-1}\Z\right)$ be a strongly non-zero point such that $kP'=\sigma_{n,n-1}\left(Q\right)$ and $p \nmid k$. Then there exists a point $P\in E\left(\Z/p^n\Z\right)$ such that $kP=Q$.
\end{lemma}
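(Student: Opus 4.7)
The plan is to lift $P'$ to some $\tilde P\in E(\Z/p^n\Z)$, then correct $\tilde P$ by an element of $\ker(\sigma_{n,n-1})$ so that multiplication by $k$ lands exactly on $Q$.

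For the lifting step, I would use that $P'$ is strongly non-zero to write it as $(x':y':1)$ with $x',y'\in\Z/p^{n-1}\Z$, pick arbitrary integer representatives, and search for $(\alpha,\beta)\in\F_p^2$ such that $(x'+p^{n-1}\alpha \,:\, y'+p^{n-1}\beta \,:\, 1)$ satisfies the Weierstrass equation mod $p^n$. Since $P'$ already satisfies the equation mod $p^{n-1}$, substituting and expanding collapses the $p^n$ condition to a single $\F_p$-linear equation in $(\alpha,\beta)$ whose coefficients are the partial derivatives of the curve equation at $(x',y')$. Because $E$ has good reduction at $p$, these partial derivatives cannot both vanish mod $p$, so the linear equation is solvable and produces the desired $\tilde P$ with $\sigma_{n,n-1}(\tilde P)=P'$.

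Once $\tilde P$ is available, set $R := Q - k\tilde P$. Applying $\sigma_{n,n-1}$ gives $\sigma_{n,n-1}(R) = \sigma_{n,n-1}(Q) - kP' = \mcO$, so $R\in\ker(\sigma_{n,n-1})$. The proof of Corollary \ref{lemma:order4} identifies $\ker(\sigma_{n,n-1})$ as a group of order $p$, hence cyclic. Since $p\nmid k$, multiplication by $k$ is an automorphism of this cyclic group, so there exists (a unique) $S\in\ker(\sigma_{n,n-1})$ with $kS=R$. Taking $P := \tilde P + S$ then yields $kP = k\tilde P + kS = k\tilde P + R = Q$, as required.

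The main obstacle is the first step: establishing surjectivity of $\sigma_{n,n-1}$ on strongly non-zero points. This is essentially a Hensel-type argument, and the entire content is the non-vanishing mod $p$ of the Jacobian of the Weierstrass equation at $(x',y')$, which is precisely the smoothness of $E$ at the reduction of $P'$ and is guaranteed by the good-reduction hypothesis implicit in the setup. Once the lift exists, the algebraic correction using the cyclic kernel is purely group-theoretic and immediate.
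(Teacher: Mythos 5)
Your proof is correct, and it takes a genuinely different route from the paper's. The paper works entirely inside the abstract group: it decomposes $E(\Z/p^n\Z) \cong H \oplus \Z/p^{a_j}\Z$ so that $\ker(\sigma_{n,n-1})$ sits inside the single cyclic factor $\Z/p^{a_j}\Z$, identifies $E(\Z/p^{n-1}\Z)$ with $H \oplus \Z/p^{a_j-1}\Z$ compatibly with $\sigma_{n,n-1}$, and then applies Hensel's lemma to the linear polynomial $kx-s$ to lift the $\Z/p^{a_j}\Z$-coordinate of $P'$ to one mapping onto that of $Q$. You instead lift the point $P'$ itself through $\sigma_{n,n-1}$ by a Hensel argument on the Weierstrass equation (valid because good reduction makes the reduced curve smooth at the reduction of $P'$; note the Taylor expansion requires $2(n-1)\ge n$, i.e.\ $n\ge 2$, which is the only case in which the lemma has content), and then correct the error $R = Q - k\tilde P$ inside the order-$p$ kernel, on which multiplication by $k$ is an automorphism since $p\nmid k$. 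Your version buys a cleaner argument: it avoids the paper's somewhat delicate claim that the two curves admit compatible direct-sum decompositions differing in exactly one exponent, and the correction step is purely formal. The cost is that you must prove surjectivity of $\sigma_{n,n-1}$ directly, but that fact (equivalently $\#E(\Z/p^m\Z)=p^{m-1}\#E(\mathbb{F}_p)$, quoted in the paper's preliminaries) is implicitly needed in the paper's proof as well, to know the kernel has order $p$. One small addendum worth recording: your $P=\tilde P+S$ satisfies $\sigma_{n,n-1}(P)=P'$, so by Proposition \ref{lemma:order3} it is strongly non-zero; the paper's proof establishes this too, and the induction in Theorem \ref{lemma:order5} relies on it even though the statement of the lemma does not demand it.
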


\begin{proof}
Note that $\vert\ker\left(\sigma_{n,n-1}\right)\vert=p$. It follows that $\ker\left(\sigma_{n,n-1}\right)\cong \mathbb{Z}/{p\mathbb{Z}}$. We can write 
\[
  E\left(\Z/p^n\Z\right) \cong \bigoplus_i \mathbb{Z}/{p^{a_i}\mathbb{Z}}  \oplus G
\]
where $G$ does not contain any elements of order $p$.  Since $\ker\left(\sigma_{n,n-1}\right)\cong \mathbb{Z}/{p\mathbb{Z}}$ is a normal subgroup of $E\left(\Z/p^n\Z\right)$, it follows that
\[
  E\left(\Z/p^{n-1}\Z\right) \cong \bigoplus_i \mathbb{Z}/{p^{a_i-b_i}\mathbb{Z}}  \oplus G
\]
where $b_j=1$ for exactly one index $j$ and $b_i = 0$ for all other indices $i\neq j$. Let $j$ be the index such that $b_j = 1$. Then we can write
\begin{align}
  E\left(\Z/p^n\Z\right) &\cong H \oplus \mathbb{Z}/{p^{a_j}\mathbb{Z}} \\
  E\left(\Z/p^{n-1}\Z\right) &\cong H \oplus \mathbb{Z}/{p^{a_j-1}\mathbb{Z}}
\end{align}
where $H \cong \bigoplus_{i \neq j} \mathbb{Z}/{p^{a_i}\mathbb{Z}} \oplus G$ . Let   $\psi: H \oplus \mathbb{Z}/{p^{a_j}\mathbb{Z}} \rightarrow H \oplus \mathbb{Z}/{p^{a_j-1}\mathbb{Z}}$ be given by
\begin{align}
  &\psi: \left(y,z\right) \mapsto \left(y, z\bmod{p^{a_j-1}}\right)
\end{align}
where $y \in H$, $z \in \Z/p^{a_j}\Z$.
Let $Q\in E\left(\mathbb{Z}/p^n\mathbb{Z}\right)$ be a non-zero point, and let $Q \in H \oplus \mathbb{Z}/{p^{a_j}\mathbb{Z}}$. Then $\psi\left(Q\right)$ is a non-zero point. Assume that there exists a strongly non-zero point $P'\in E\left(\Z/p^{n-1}\Z\right)$ such that $kP'=\sigma_{n,n-1}\left(Q\right)$. Write $Q \cong \left(r,s\right)$ with $r \in H$ and $s \in \mathbb{Z}/{p^{a_j}\mathbb{Z}}$. Similarly write $P' \cong \left(h,g\right)$ with $h \in H$ and $g \in \mathbb{Z}/{p^{a_j-1}\mathbb{Z}}$. By assumption, $kh=r$ and $kg \equiv s \bmod{p^{a_j - 1}}$. Consider the polynomial $f\left(x\right)=kx-s$. Since $k \neq 0 \bmod p$, $f\left(x\right)$ does not have any double roots$\mod p^{a_j-1}$.  Then by Hensel's lemma there exists a number $g' \in \Z/p^{a_j}\Z$ with $g'\equiv g\mod{p^{a_j-1}}$ such that $f\left(g'\right) = 0 \bmod p^{a_j}$. It follows that $kg'-s=0 \bmod p^{a_j}$. Thus $k\left(h,g'\right)=\left(r,s\right)$. Choose $P$ such that $P \cong \left(h,g'\right)$ with $h \in H$ and $g' \in \mathbb{Z}/{p^{a_j-1}\mathbb{Z}}$ yields $kP=Q$.\\
Note that 
\[
  \psi\left(P\right)=\psi\left(h,g'\right) = \left(h,g\right) = P'
\]
Thus by Lemma \ref{lemma:order3} $P$ is a strongly non-zero point.
\end{proof}

\begin{theorem}\label{lemma:order5}
Let $p$ be an odd prime and $Q$ be a non-zero point in $E\left(\Z/p^n\Z\right)$. There exists an integer $k$ and a strongly non-zero point $P\in E\left(\Z/p^n\Z\right)$ such that $kP = Q$ if and only if one of the following holds:

\begin{itemize}
	\item [(a)] $E\left(\Z/p\Z\right)$ is not anomalous.
	\item [(b)] $E\left(\Z/p^n\Z\right) \cong \Z/{p^n\Z}$.
	\item [(c)]  $E\left(\Z/p^n\Z\right) \cong \Z/p\Z \oplus \Z/{p^{n-1}\Z}$ and $Q \cong (Q_1,Q_2)$ with $Q_1 \in \Z/p\Z$, $Q_2 \in \Z/{p^{n-1}\Z}$, where $Q_2$ is not a generator of $\Z/{p^{n-1}\Z}$
\end{itemize}
\end{theorem}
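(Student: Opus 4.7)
The plan rests on the short exact sequence
\[
  0 \longrightarrow \ker(\sigma_{n,1}) \longrightarrow E(\Z/p^n\Z) \longrightarrow E(\F_p) \longrightarrow 0,
\]
together with the fact (using that $p$ is odd) that $\ker(\sigma_{n,1})$ is cyclic of order $p^{n-1}$, and the observation that $P$ is strongly non-zero iff its image in $E(\F_p)$ is non-trivial. The essential case is when $Q$ is non-zero but lies in the kernel of reduction; for strongly non-zero $Q$ one trivially takes $P=Q$ and $k=1$.

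For the $(\Leftarrow)$ direction I would exhibit $(k,P)$ in each case. In case (a), since $E(\F_p)$ is not anomalous, Hasse's bound forces some prime $\ell\ne p$ to divide $\#E(\F_p)$; any order-$\ell$ point of $E(\F_p)$ lifts through the $\ell$-primary component (on which the reduction is an isomorphism, since $\ker(\sigma_{n,1})$ is a $p$-group) to a strongly non-zero $P'\in E(\Z/p^n\Z)$ of order $\ell$, and Lemma \ref{lemma:order6} with $k=\ell$ supplies $P$. In case (b), a generator of $E(\Z/p^n\Z)\cong\Z/p^n\Z$ reduces to a generator of the anomalous $E(\F_p)\cong\Z/p\Z$ and so is strongly non-zero, while $Q\in pE(\Z/p^n\Z)$ is of the form $(pm)P$. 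In case (c), I would pick the decomposition $E(\Z/p^n\Z)=A\oplus K$ with $A\cong\Z/p\Z$ a complement and $K=\ker(\sigma_{n,1})\cong\Z/p^{n-1}\Z$; writing $Q=(0,Q_2)$ with $Q_2=ps$ (since $Q_2$ is not a generator), the pair $P=(1,1)\in A\oplus K$ and $k=ps$ satisfies $kP=(0,ps)=Q$, and $P$ is strongly non-zero by its nontrivial $A$-component.

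For the converse, I would proceed contrapositively. If (a), (b), (c) all fail, then $E(\F_p)$ is anomalous, and the only two isomorphism classes of groups fitting into $0\to\Z/p^{n-1}\Z\to G\to\Z/p\Z\to 0$ are $\Z/p^n\Z$ (non-split) and $\Z/p\Z\oplus\Z/p^{n-1}\Z$ (split), so the failure of (b) selects the split case and the failure of (c) forces $Q_2$ to be a generator of $\Z/p^{n-1}\Z$. In the same $A\oplus K$ decomposition, a strongly non-zero $P=(a,b)$ has $a\ne 0$, so $kP=Q=(0,Q_2)$ forces $ka\equiv 0\pmod p$ and hence $p\mid k$; consequently $kb\in pK$, which contradicts the fact that the generator $Q_2$ is not in $pK$.

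The principal obstacle will be the cyclicity of $\ker(\sigma_{n,1})$ for odd $p$, which goes slightly beyond what the earlier corollaries supply (they give only its order) and underpins both the classification of the anomalous $E(\Z/p^n\Z)$ and the realization of $K$ as a direct summand in case (c). A related subtlety is that ``$Q_2$ is not a generator'' should be interpreted with the $\Z/p^{n-1}\Z$ summand chosen as the formal-group factor (the canonical kernel of reduction); with this convention it is equivalent to the intrinsic statement $|Q|\ne p^{n-1}$, which is meaningful for $n\ge 3$, while case (c) is vacuous for $n=2$ because then any non-zero $Q$ already has its kernel-component generating $\Z/p\Z$.
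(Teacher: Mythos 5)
Your proof is correct, and for half of the theorem it takes a genuinely different route from the paper's. For the non-anomalous case (a), the paper argues by induction on $n$: the base case $n=1$ is immediate, and the inductive step lifts a solution from $E(\Z/p^{n-1}\Z)$ to $E(\Z/p^n\Z)$ using the Hensel-lifting statement of Lemma \ref{lemma:order7}. You instead note that non-anomalous (together with the section's standing assumption that strongly non-zero points exist, which rules out $\#E(\Z/p\Z)=1$) forces a prime $\ell\neq p$ to divide $\#E(\Z/p\Z)$, lift an order-$\ell$ point through the prime-to-$p$ part of the reduction map to a strongly non-zero point of order $\ell$, and then apply Lemma \ref{lemma:order6} with $k=\ell$; this bypasses the induction and Lemma \ref{lemma:order7} entirely, at the cost of invoking Hasse. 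The anomalous cases are handled essentially as in the paper: the generator argument for $E(\Z/p^n\Z)\cong\Z/p^n\Z$, the explicit witness in case (c) (your $P=(1,1)$, $k=ps$ is the same computation as the paper's $P=(1,r)$, $k=p$), and the ``$p\mid k$ forces $Q_2\in pK$'' contradiction for the forward direction. Both arguments rest on the kernel of reduction being cyclic of order $p^{n-1}$ for odd $p$ -- the paper itself cites this with a placeholder -- so your identification of that as the main external input is accurate rather than a gap. Your closing observation, that condition (c) is only well-posed once the $\Z/p^{n-1}\Z$ summand is taken to be the kernel of reduction (equivalently, that ``$Q_2$ is not a generator'' means $\vert Q\vert<p^{n-1}$), is a genuine clarification that the paper leaves implicit.
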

\begin{proof}
The cases when $E$ is not anomalous will be proven by induction on $n$. Note that this is trivially satisfied for $n=1$ because there are no non-zero points and since the order of the curve is coprime to $p$, $k$ is coprime to $p$.
Suppose that the statements holds up to $n-1$. Let $Q$ be a non-zero point in  $E\left(\Z/p^{n}\Z\right)$, and $\sigma_{n,n-1}$ be as defined above. Then $\sigma_{n,n-1}\left(Q\right) \in E\left(\Z/p^{n-1}\Z\right)$ is a non-zero point, so by the inductive hypothesis we have $k,P'$ such that $kP' = \sigma_{n,n-1}\left(Q\right)$ with $k$ coprime to $p$ and $P'$ a strongly non-zero point. Then the claim follows by Lemma \ref{lemma:order7}.
In the case $E\left(\Z/p\Z\right)$  is anomalous at $p$ we have two cases:
\begin{enumerate}[\text{Case} (1):]
\item $E(\Z/{p^n\Z}) \cong \Z/{p^n\Z}$. Consider the natural homomorphism $\sigma_{n,1}: E\left(\Z/p^n\Z\right) \rightarrow E\left(\Z/p\Z\right)$. Since $\sigma_{n,1}$ is surjective, for any generator $P$ of $E\left(\Z/p^n\Z\right)$, $\sigma_{n,1}\left(P\right) \neq \mcO$. Thus there are no non-zero points that are generators of $E\left(\Z/{p^n\Z}\right)$. Therefore there is a strongly non-zero point $P$ which is a generator of $E\left(\Z/{p^n\Z}\right)$. Thus for all points $Q$, there exists a strongly non-zero point $P$ with $kP=Q$ for some integer $k$.
\item $E(\Z/{p^n\Z}) \cong \Z/{p\Z} \oplus \Z/{p^{n-1}\Z}$.  It is well known (CITE SOMETHING) that $E_1/E_{n} \cong \mathbb{Z}/p^{n-1}\mathbb{Z}$ and it follows that for any point $P \cong (P_1,P_2)$ in $E(\Z/{p^n\Z})$ where $P_1 \in \Z/{p\Z}$ and $P_2 \in \Z/{p^{n-1}\Z}$, $P$ is a non-zero point if and only if $P_1$ is the identity. We want to show a non-zero point $Q \cong (0,Q_2)$ can be written as $kP$ for some integer $k$ and some strongly non-zero point $P$ if and only if $Q_2$ is not a generator of $\Z/{p^{n-1}\Z}$.\\
($\Rightarrow$) Assume that $Q \cong (0,Q_2)$ where $Q_2 \in \Z/{p^{n-1}\Z}$ can be written as $kP$ for some integer $k$ and some strongly non-zero point $P \cong (P_1,P_2)$ with $P_1 \in \Z/{p\Z}$ and $P_2 \in \Z/{p^{n-1}\Z}$. Since $P$ is strongly non-zero, $P_1 \neq 0$. However, since $kP_1 = 0$ and $P_1 \in \Z/p\Z$, $\order(P_1) = p$, so $p \mid k$, which implies that $p \mid kP_2 = Q_2$. Therefore $Q_2$ is not a generator of $\Z/{p^{n-1}\Z}$.\\

Conversely, assume $Q \cong (0,Q_2)$ where $Q_2 \in \Z/{p^{n-1}\Z}$ and $Q_2$ is not a generator.  Then $Q_2 = p \cdot r$ for some $r \in \Z/{p^{n-1}\Z}$, and thus 
$Q=kP$ for $k=p$ and $P \cong (1,r)$.
\end{enumerate}
\end{proof}

Note that the last case in Theorem \ref{lemma:order5} only applies for non-zero points that are not generators of the subgroup $\Z/p^{n-1}\Z$ of $E(\Z/p^n\Z)$. The following holds for all non-zero points in $E(\Z/p^n\Z)$.

\begin{lemma}\label{lemma:order9}
Let $E\left(\Z/p^n\Z\right)$ be an elliptic curve and $Q\in E\left(\Z/p^n\Z\right)$ be a non-zero point. Then there exists a strongly non-zero point $P \in E(\Z/p^n\Z)$ such that $\vert Q\vert$ divides $\vert P\vert$.
\end{lemma}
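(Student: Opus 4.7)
The plan is to exploit Corollary \ref{lemma:order4}, which tells us that every non-zero point in $E(\Z/p^n\Z)$ has order a power of $p$, together with the primary decomposition of $G := E(\Z/p^n\Z)$. Write $G = H \oplus B$, where $H$ is the Sylow $p$-subgroup and $B$ has order coprime to $p$. The subgroup $K$ of non-zero points (the kernel of reduction modulo $p$) lies inside $H$ and has order $p^{n-1}$, and our given point $Q \in K$ has $|Q| = p^k$ for some $0 \le k < n$. The goal is to produce a strongly non-zero $P \in G \setminus K$ with $p^k$ dividing $|P|$.

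I would split into two cases according to whether $B$ is trivial. When $B$ is nontrivial, pick any $R \in B$ with $R \neq \mathcal{O}$ and set $P := Q + R$. Since $K \subseteq H$ we have $R \notin K$, so $P \notin K$; and because $|Q| = p^k$ is coprime to $|R|$, the sum has order exactly $p^k \cdot |R|$, so $p^k \mid |P|$. The interesting case is $G = H$, i.e.\ the whole group is a $p$-group. Here the standing assumption of the section that strongly non-zero points exist forces $K \subsetneq H$, and one must find $P \in H \setminus K$ with $|P|$ divisible by $p^k$.

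The heart of the argument is the following sublemma, which I regard as the main technical step: in any finite abelian $p$-group $H$ with proper subgroup $K$, there is some $P \in H \setminus K$ whose order equals the exponent $p^e$ of $H$. To prove it, first ask whether some element of order $p^e$ already lies outside $K$; if so, take $P$ to be such an element. Otherwise every element of order $p^e$ is contained in $K$, so pick any $y \in H \setminus K$ (then $|y| = p^j$ for some $j < e$, for otherwise $y$ would be forced into $K$) and any $x \in K$ with $|x| = p^e$, and set $P := x + y$. Then $P \notin K$, while the identity $x = P - y$ gives $p^e = |x| \mid \lcm(|P|, p^j)$; since $|P|$ already divides $\lcm(p^e, p^j) = p^e$ and $j < e$, this forces $|P| = p^e$. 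Because $|Q| = p^k$ divides the exponent $p^e$ of $H$, the conclusion $p^k \mid |P|$ follows. The obstacle worth flagging is precisely ruling out $|P| < p^e$ in the subcase where $K$ swallows all elements of maximal order, and the observation that $x = P - y$ is what resolves it.
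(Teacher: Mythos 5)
Your proof is correct, but it takes a genuinely different route from the paper's. The paper splits according to whether $(E,Q)$ falls under Theorem \ref{lemma:order5}: when it does, $Q=kP$ for some strongly non-zero $P$, so $\vert Q\vert \mid \vert P\vert$ immediately; in the one remaining configuration ($E(\Z/p^n\Z)\cong \Z/p\Z\oplus\Z/p^{n-1}\Z$ with $Q_2$ a generator) it counts the $p^{n-1}(p-1)$ elements of order exactly $p^{n-1}$ and observes they cannot all fit inside the $p^{n-1}$-element kernel of reduction, so some strongly non-zero point has maximal order. You instead argue purely group-theoretically, never invoking Theorem \ref{lemma:order5}: you split on whether $E(\Z/p^n\Z)$ is a $p$-group, dispose of the mixed-order case by adding a nontrivial prime-to-$p$ component to $Q$ (which leaves the $p$-part of the order intact and exits the kernel $K$), and handle the $p$-group case with the clean general sublemma that a proper subgroup of a finite abelian $p$-group must omit some element of maximal order --- your observation that $x=P-y$ forces $\vert P\vert=p^e$ is exactly the right way to close the one delicate subcase. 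What each approach buys: the paper's proof is short on the page because it leans on machinery already built (though Theorem \ref{lemma:order5} itself requires Hensel lifting and induction), while yours is self-contained, avoids any case analysis on anomalous reduction or the explicit structure $\Z/p\Z\oplus\Z/p^{n-1}\Z$, and works uniformly in $p$ (the paper's counting step, as written, silently uses $p$ odd to get ``more than $p^{n-1}$'' elements of maximal order). You do rely on the standing assumption that strongly non-zero points exist to get $K\subsetneq H$ in the $p$-group case, but the paper makes that assumption for the whole section, so this is fine.
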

\begin{proof} We will consider the following cases
\begin{enumerate}[\text{Case} (1):]
  \item $Q$ and $E$ satisfy one of the conditions from Theorem \ref{lemma:order5}. In this case $\vert Q\vert \Big | \vert P\vert$ since $kP=Q$.
  \item $E\left(\Z/{p^n\Z}\right) \cong \Z/p\Z \oplus \Z/{p^{n-1}\Z}$ and $Q=(0,Q_2)$ with $Q_2 \in \Z/{p^{n-1}\Z}$, where $Q_2$ is a generator of $\Z/p^{n-1}\Z$. In the case when $n=1$, this case is trivially true because $Q = \mcO \in E(\Z/p^n\Z)$, so $\vert Q\vert = 1$. When $n>1$, note that the order of any point in $E(\Z/p^n\Z)$ divides $p^{n-1}$. There are 
\[
  p \cdot \left(p^{n-1}-p^{n-2}\right)=p^{n-1}\left(p-1\right)
\]
elements with order exactly $p^{n-1}$. Thus there are more than $p^{n-1}$ elements with order $p^{n-1}$. Since there are only $p^{n-1}$ non-zero points, there is a strongly non-zero point $P$ with $\vert P \vert=p^{n-1}$ and thus for any point $Q$, $\vert Q\vert$ divides $\vert P \vert$.
\end{enumerate}
\end{proof}

\begin{lemma}\label{lemma:order10} 
 If $Q$ is a non-zero point in $E\left(\Z/n\Z\right)$, then there exists a strongly non-zero point $P$ such that $\vert Q\vert$ divides $\vert P\vert$.
\end{lemma}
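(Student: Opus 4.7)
The plan is to leverage the Chinese Remainder Theorem decomposition
$$E(\Z/n\Z) \cong \bigoplus_{p \mid n} E\bigl(\Z/p^{\nu_p(n)}\Z\bigr)$$
already used throughout the section, and to apply the prime-power result Lemma \ref{lemma:order9} componentwise. Writing $Q \leftrightarrow (Q_p)_{p \mid n}$ under this isomorphism, group orders in a direct sum are lcms of component orders, so it suffices to find, for each $p \mid n$, a strongly non-zero point $P_p \in E(\Z/p^{\nu_p(n)}\Z)$ with $|Q_p|$ dividing $|P_p|$, and then to assemble the $P_p$ into a strongly non-zero $P \in E(\Z/n\Z)$.

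First, I produce each local $P_p$. If $Q_p$ is a non-zero point of $E(\Z/p^{\nu_p(n)}\Z)$ in the sense of Definition \ref{def:stronglynonzero}, then Lemma \ref{lemma:order9} directly gives a strongly non-zero $P_p$ with $|Q_p|$ dividing $|P_p|$. If instead $Q_p$ is a zero point at $p$, then its $z$-coordinate is divisible by $p$, hence $Q_p \in \ker(\sigma_{\nu_p(n),1})$; since this kernel has order $p^{\nu_p(n)-1}$, the order $|Q_p|$ is a power of $p$ bounded by $p^{\nu_p(n)-1}$, and a counting argument in the style of Case 2 of Lemma \ref{lemma:order9} --- comparing the number of elements of maximal $p$-power order against the $p^{\nu_p(n)-1}$ non-strongly-non-zero points --- supplies a strongly non-zero $P_p$ whose order is divisible by the full exponent $p^{\nu_p(n)-1}$ of $\ker(\sigma_{\nu_p(n),1})$, and hence by $|Q_p|$.

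Second, I assemble $P$ from the local data. Because each $P_p$ is strongly non-zero, its $z$-coordinate is a unit in $\Z/p^{\nu_p(n)}\Z$, so I normalize $P_p = (x_p : y_p : 1)$. By CRT there exist unique $x, y \in \Z/n\Z$ with $x \equiv x_p$ and $y \equiv y_p \pmod{p^{\nu_p(n)}}$ for every $p \mid n$, and the projective point $P := (x : y : 1)$ lies on $E(\Z/n\Z)$ (it satisfies the Weierstrass equation modulo every prime-power divisor of $n$), is strongly non-zero since its $z$-coordinate is $1$, and decomposes as $(P_p)_{p \mid n}$ under the CRT isomorphism. Consequently $|P| = \lcm_p |P_p|$ is a multiple of $\lcm_p |Q_p| = |Q|$, which is what we wanted.

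The main obstacle is the zero-component case, since Lemma \ref{lemma:order9} is formally stated only for non-zero $Q_p$ and, for $\nu_p(n) \geq 2$, a zero point in the $z$-coordinate sense need not equal the identity $\mcO$. Handling this requires the counting/structural argument described above (or, equivalently, a mild extension of Lemma \ref{lemma:order9} covering every point of $E(\Z/p^{\nu_p(n)}\Z)$); the CRT assembly and final order-comparison steps are routine.
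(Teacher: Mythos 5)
Your proposal is correct and follows essentially the same route as the paper's own proof: decompose $E(\Z/n\Z)$ by CRT, apply Lemma \ref{lemma:order9} componentwise to obtain strongly non-zero local points $P_p$ with $\vert Q_p\vert$ dividing $\vert P_p\vert$, and conclude from $\vert P\vert = \lcm_p \vert P_p\vert$. The only difference is that you explicitly handle the components $Q_p$ that are zero points mod $p^{\nu_p(n)}$ (where Lemma \ref{lemma:order9} does not literally apply) and the CRT assembly of $P$ --- details the paper's proof passes over by simply asserting the existence of a suitable strongly non-zero $T_{p_i}$ --- and your counting argument for that case is sound.
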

\begin{proof}
 Let $Q\in E\left(\Z/n\Z\right)$ be a non-zero point. Recall that
\[
	E\left(\mathbb{Z}/{n\mathbb{Z}}\right) \cong \bigoplus_{p|n} E\left(\mathbb{Z}/{p^{\ord_p\left(n\right)}\mathbb{Z}}\right)   
\]
Thus each point $T \in E(\Z/n\Z)$ can be written as $T \cong \left(T_{p_1}, T_{p_2},...T_{p_r}\right)$ where $T_{p_i}$ denotes the point corresponding to $T$ in the subgroup $E\left(\Z/p_i^{\ord_{p_i}(n)}\Z\right)$, and $p_1, p_2, \ldots, p_r$ are the distinct prime divisors of $n$. Due to the direct sum, we have that $\vert Q \vert = \lcm \{ \vert Q_{p_i}\vert: 1 \le i \le r \}$.
For each $1 \le i \le r$, let
\[
P_{p_i} = \begin{cases}
Q_{p_i} \text{ if } Q_{p_i} \text{ is a strongly non-zero point }\\
T_{p_i} \text{ where }T_{p_i} \text{ is a strongly non-zero point and } \order\left(Q_{p_i}\right)|\order\left(T_{p_i}\right) 
\end{cases}
\] 
By Corollary \ref{lemma:order2}, $P$ is a strongly non-zero point. Note that $\vert Q \vert \Big |\vert P  \vert$ since $\vert {Q_{p_i}} \vert \Big | \vert P_{p_i}\vert$ for all $1 \le i \le r$ by construction.
\end{proof}
\begin{corollary}\label{lemma:order12}
Let $E/\Q$ be an elliptic curve. A composite number $N$ is an elliptic G-Carmichael number for the curve $E$ if and only if $N$ is a elliptic G-pseudoprime for all strongly non-zero points $P \in E(\Z/N\Z)$.  Similarly, $N$ is an elliptic S-Carmichael number for the curve $E(\Z/N\Z)$ if and only if $N$ is a S-pseudoprime for all strongly non-zero points $P \in E(\Z/N\Z)$.
\end{corollary}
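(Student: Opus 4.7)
The plan is to verify each direction of the biconditional separately, for both the G- and S-Carmichael assertions, and to reduce the whole statement to a short order-divisibility argument powered by Lemma \ref{lemma:order10}.

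The forward direction is immediate: a strongly non-zero point in $E(\Z/N\Z)$ is in particular a point of $E(\Z/N\Z)$, so the hypothesis that $N$ is a G-pseudoprime (respectively S-pseudoprime) at every point forces it in particular at the strongly non-zero ones. No real work here.

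For the reverse direction, I would first observe that all the conditions appearing in the definition of an elliptic G-pseudoprime besides the congruence $(N+1)P \equiv \mcO \pmod{N}$---namely compositeness of $N$, $\gcd(N,6\Delta) = 1$, and $\jacsym{-d}{N} = -1$---are point-independent, so it suffices to verify $(N+1)Q \equiv \mcO \pmod{N}$ for every $Q \in E(\Z/N\Z)$. Fix such a $Q$. If $Q = \mcO$ the congruence is trivial. Otherwise $Q$ is a non-zero point in $E(\Z/N\Z)$ (I would spend one line noting that, because the Weierstrass equation forces $x \equiv 0$ whenever $z \equiv 0$ on a projective representative of $\mcO$, the notions "zero$\bmod N$" and "group identity in $E(\Z/N\Z)$" agree), so Lemma \ref{lemma:order10} produces a strongly non-zero point $P \in E(\Z/N\Z)$ with $\lvert Q \rvert \mid \lvert P \rvert$. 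By assumption $N$ is a G-pseudoprime at $P$, hence $(N+1)P = \mcO$, which means $\lvert P \rvert \mid (N+1)$; combining the two divisibilities yields $\lvert Q \rvert \mid (N+1)$ and therefore $(N+1)Q = \mcO$, as required.

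For the S-Carmichael half I would repeat the same argument verbatim with $N+1-a_N$ in place of $N+1$. The point-independent hypotheses in the S-pseudoprime definition (namely that $N$ has at least two distinct prime factors and that $E$ has good reduction at every prime dividing $N$) play the role analogous to the G-case hypotheses, and the divisibility deduction goes through without change because $(N+1-a_N)P = \mcO$ forces $\lvert P \rvert \mid (N+1-a_N)$.

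There is no substantive obstacle: the heavy lifting has already been done in Lemma \ref{lemma:order10}, so the corollary is essentially a two-line application of it. The only care needed is conceptual, namely matching the definition of "non-zero$\bmod N$" with "non-identity in $E(\Z/N\Z)$" so that Lemma \ref{lemma:order10} applies to exactly the points we need it to, and separating off the trivial case $Q = \mcO$ before invoking the lemma.
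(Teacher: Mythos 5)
Your proof is correct and follows essentially the same route as the paper's: the forward direction is immediate, and the reverse direction invokes Lemma \ref{lemma:order10} to get, for each point $Q$, a strongly non-zero $P'$ with $\lvert Q\rvert \mid \lvert P'\rvert \mid N+1$ (resp.\ $N+1-a_N$). Your version is in fact slightly more careful than the paper's, in that you explicitly separate off the identity and note that the remaining hypotheses of the pseudoprime definitions are point-independent.
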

\begin{proof}
We will prove the statement for elliptic G-Carmichael numbers. The proof for elliptic S-Carmichael numbers is similar.  Suppose $N$ is an elliptic G-Carmichael number for a curve $E$ i.e. $N$ is an elliptic G-pseudoprime for all strongly non-zero points $P \in E(\Z/N\Z)$. \\
Conversely,  assume that $N$ is an elliptic G-pseudoprime for all strongly non-zero points $P \in E(\Z/N\Z)$. Then for all strongly non-zero points $P \in E(\Z/N\Z)$, the order $\vert P\vert \Big |  N+1$. By Lemma \ref{lemma:order10}, for any non-zero point $Q$, there exists a strongly non-zero point $P'$ such that 
\[
\vert Q\vert \Big | \vert P'\vert \Big | N+1 
\]
Thus $N$ is an elliptic G-pseudoprime for all points $P \in E(\Z/N\Z)$ i.e  $N$ is an elliptic G-Carmichael number.
\end{proof}

\begin{corollary}\label{lemma:order13}
Let $E/\Q$ be an elliptic curve, $N$ be a composite integer, and $t$ be any integer. Then $\epsilon_{N,p}\left(E\right) \mid t$ if and only if for all strongly non-zero points $P \in E\left(\mathbb{Z}/N\mathbb{Z}\right)$, $\vert P\vert \Big | t$.
\end{corollary}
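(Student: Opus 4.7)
The plan is to interpret $\epsilon_{N,p}(E)$ as the exponent of $E(\Z/N\Z)$, i.e.\ the least common multiple of $|Q|$ as $Q$ ranges over all points of $E(\Z/N\Z)$, and then to prove the two directions separately.

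The forward direction is immediate: if $\epsilon_{N,p}(E) \mid t$, then every point $Q \in E(\Z/N\Z)$ has order dividing $\epsilon_{N,p}(E)$ and hence dividing $t$, and in particular this holds for every strongly non-zero point $P$.

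For the reverse direction, assume $|P| \mid t$ for every strongly non-zero $P \in E(\Z/N\Z)$. It suffices to show $|Q| \mid t$ for every $Q \in E(\Z/N\Z)$, since then $t$ is a common multiple of all point orders, so $\epsilon_{N,p}(E) \mid t$. If $Q = \mcO$, then $|Q| = 1$ divides $t$ trivially. Otherwise $Q$ is a non-zero point$\mod N$, and Lemma \ref{lemma:order10} produces a strongly non-zero point $P \in E(\Z/N\Z)$ with $|Q|$ dividing $|P|$. Combining with the hypothesis $|P| \mid t$ gives $|Q| \mid t$, as required.

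The only real content is the appeal to Lemma \ref{lemma:order10}, which is precisely the mechanism that lets us transfer a divisibility statement about every strongly non-zero point to one about every point. No genuine obstacle remains beyond this; the corollary is essentially a repackaging of that lemma in the language of exponents, with the slight bookkeeping remark that the hypothesis ``non-zero$\mod N$'' in Lemma \ref{lemma:order10} automatically covers every non-identity point of $E(\Z/N\Z)$.
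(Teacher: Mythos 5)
Your proof is correct and follows essentially the same route as the paper: the paper's own argument simply cites the characterization of $\epsilon_{N,p}(E)\mid t$ as ``$tP=\mcO$ for all $P$'' and then invokes Corollary \ref{lemma:order12}, whose content is exactly your application of Lemma \ref{lemma:order10}. One small bookkeeping remark: your closing claim that every non-identity point is ``non-zero $\bmod\ N$'' in the sense of Definition \ref{def:stronglynonzero} is not quite accurate when $N$ is not squarefree (and the paper's own usage of ``non-zero'' in Lemma \ref{lemma:order10} is not literally that of Definition \ref{def:stronglynonzero}), but this is harmless, since any point not covered by Lemma \ref{lemma:order10} is strongly non-zero and is then handled directly by the hypothesis.
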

\begin{proof}
From \cite{REU 2017}, we have $\epsilon_{N,p}\left(E\right) \Big | t$ if and only if for all points $P \in E\left(\mathbb{Z}/{N\mathbb{Z}}\right)$, $tP=\mcO$. By Corollary \ref{lemma:order12},

this is true if and only if for all strongly non-zero points $P \in E\left(\mathbb{Z}/{N\mathbb{Z}}\right)$, $tP=\mcO$.
\end{proof}

\begin{theorem}\label{lemma:order14}
  Let $E/\mathbb{Q}$ be an elliptic curve. There is no composite number $N$ such that $N$ is a strong eliiptic G-pseudoprime for all strongly non-zero points $P \in E(\mathbb{Z}/N\mathbb{Z})$.
\end{theorem}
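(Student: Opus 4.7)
The plan is to argue by contradiction. Suppose $N$ is a composite number (necessarily with $\gcd(N,6\Delta)=1$ and $\left(\frac{-d}{N}\right)=-1$) such that $N$ is a strong G-pseudoprime at every strongly non-zero $P\in E(\Z/N\Z)$, and write $N+1=2^st$ with $t$ odd. The prime-power case $N=p^a$ with $a\ge 2$ falls immediately: strong G-pseudoprime implies G-pseudoprime, so by Corollary~\ref{lemma:order12} $N$ is a G-Carmichael number, and hence the exponent of $E(\Z/p^a\Z)$ must divide $p^a+1$. But the formal group $\hat E(p\Z/p^a\Z)\cong\Z/p^{a-1}\Z$ (for $p\ge 5$) makes $p^{a-1}$ a divisor of the exponent, while $p^a+1\equiv 1\pmod{p^{a-1}}$ when $a\ge 2$ -- a contradiction. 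So I may assume $N$ has at least two distinct prime factors and work with the decomposition $E(\Z/N\Z)\cong\bigoplus_{p\mid N}E_p$ where $E_p=E(\Z/p^{\nu_p(N)}\Z)$; by Lemma~\ref{lemma:order1} a global point is strongly non-zero iff each component is.

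Translating Definition~\ref{def:strong G-psp} componentwise, condition (i) says $|P_p|$ is odd and divides $t$ for every $p\mid N$, and condition (ii) at level $r$ says $(2^rt)P_p$ is a non-trivial affine $2$-torsion point of $E_p$ for every $p\mid N$ with a common $r$, equivalently $\nu_2(|P_p|)=r+1$ and the odd part of $|P_p|$ divides $t$. Thus ``$N$ is a strong G-pseudoprime at $P$'' amounts to the existence of a single $v\in\{0,1,\dots,s\}$ with $\nu_2(|P_p|)=v$ for every $p\mid N$. I then promote this to a uniform statement: given two strongly non-zero $P_p,Q_p\in E_p$, embed them into global strongly non-zero points sharing the same strongly non-zero components at the other primes; the $v$-values of these two globals agree (being determined by the shared components), forcing $\nu_2(|P_p|)=\nu_2(|Q_p|)$. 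A symmetric substitution across two different primes shows the resulting constant is the same for every $p\mid N$; call it $v$.

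For the contradiction, use $\left(\frac{-d}{N}\right)=-1$ to pick a prime $p\mid N$ inert in $\Q(\sqrt{-d})$, so $a_p=0$ and $|E(\F_p)|=p+1\ge 6$. The kernel of the reduction $E_p\to E(\F_p)$ has order $p^{\nu_p(N)-1}$, a power of an odd prime, so $\nu_2(|P_p|)=\nu_2(|\bar P_p|)$; and every non-identity $\bar Q\in E(\F_p)$ is the reduction of some strongly non-zero $P_p\in E_p$. Hence every non-identity element of $E(\F_p)$ must have $2$-adic valuation of order equal to $v$. I contradict this as follows: if $p+1$ has an odd prime factor $q$, Cauchy's theorem supplies an element of order $q$ (valuation $0$) and an element of order $2$ (valuation $1$); if instead $p+1=2^k$ then $k\ge 3$ (since $p\ge 5$), and the bound $\dim_{\F_2}E(\F_p)[2]\le 2$ forces $E(\F_p)$ to be $\Z/2^k\Z$ or $\Z/2^{a_1}\Z\oplus\Z/2^{a_2}\Z$ with $a_1+a_2=k$, each containing elements of order $2$ and of order $\ge 4$ with valuations $1$ and $\ge 2$. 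Either alternative contradicts the uniformity of $v$.

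The principal obstacle is the uniform-valuation step, converting a per-point condition (``some $v$ works for this $P$'') into a per-component statement (``one $v$ works for every strongly non-zero $P_p\in E_p$ simultaneously''). Once that is in hand, the prime-power reduction and the inert-prime contradiction are essentially automatic, requiring only that inert primes give $|E(\F_p)|=p+1\ge 6$ and the standard bound on the $\F_2$-rank of $E(\F_p)[2]$.
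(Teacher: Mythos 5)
Your proof is correct, and while it runs on the same engine as the paper's --- exploit an inert prime $p\mid N$ with $\#E(\F_p)=p+1$ even and at least $6$, together with the fact that the strong condition forces the $2$-adic valuations of the component orders of a strongly non-zero point to agree --- the packaging is genuinely different and arguably cleaner. The paper constructs, in three separate cases, an explicit strongly non-zero point $X=(P,Q,h)$ with mismatched component valuations; this requires juggling a second prime $q$ and splitting on whether $\#E(\F_q)$ and $\#E(\F_p)$ are powers of $2$. You instead first prove, by the component-swapping argument, that \emph{all} strongly non-zero local points at \emph{all} primes would have to share one valuation $v$, and then obtain the contradiction entirely inside $E(\F_p)$ (an odd-order element versus a $2$-torsion element, or an order-$4$ versus an order-$2$ element when $p+1$ is a power of $2$ at least $8$); since the kernel of reduction has odd order, valuations are computed in $E(\F_p)$, so one prime suffices. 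Your treatment of non-squarefree $N$ also diverges: the paper invokes Lemma~\ref{lemma:order9} to produce a strongly non-zero point of order divisible by $p$ whenever $p^2\mid N$, whereas you need only excise the prime-power case (the formal-group kernel of order $p^{a-1}$ forces $p$ to divide the exponent of $E(\Z/p^a\Z)$, while $p\nmid p^a+1$), because your main argument never uses squarefreeness. Two small remarks: your componentwise translation of condition (ii) is stated as an equivalence, but you only need and only verify the forward implication; and the uniform-valuation step silently uses that each $E(\Z/\ell^{\nu_\ell(N)}\Z)$ contains a strongly non-zero point, which holds by the Hasse bound since every $\ell\mid N$ is at least $5$ (the paper makes the same standing assumption).
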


\begin{proof}
 We prove the claim of the theorem by considering several cases.
\begin{enumerate}[\text{Case} (1):]
  \item $N$ contains a square, i.e. $p^e || N$ for a prime $p$ and an integer $e>1$. 
 We know that $\vert E\left(\Z/p^e\Z\right)\vert = p^{e-1}\vert E(\Z/p\Z)\vert$.  Note that $E(\Z/p^e\Z)$ contains a point of order $p$.  By Lemma \ref{lemma:order9},  there exists a strongly non-zero point $P \in E(\Z/p^e\Z)$ such that $\vert X \vert \mid \vert P \vert$, so $p \Big | \vert P \vert$. In particular, $\vert P \vert \nmid N + 1$, so $\left(N+1\right)P \neq \mcO$. Therefore $N$ is not a strong elliptic G-pseudoprime for the point $P \in E(\Z/p^e\Z)$.
In the following two cases let $p$ be a prime with $p \mid N$ and $\vert E\left(\Z/p\Z\right)\vert = p+1$. Note that such a prime $p$ must exist from our definition of a G-pseudoprime. Also assume $N$ is squarefree.
  
\item There exists a prime $q \neq p$ such that $\vert E\left(\Z/q\Z\right)\vert$ is not a power of $2$.  By the first Sylow theorem, there exists a point $Q\in E\left(\Z/q\Z\right)$ of odd order and there exists a point $P\in E\left(\Z/p\Z\right)$ of even order. Note that the points $P$ and $Q$ are both strongly non-zero points. Write $E\left(\Z/N\Z\right) = E\left(\Z/p\Z\right) \oplus E\left(\Z/q\Z\right) \oplus H$ for some group $H$. Take the point $X = \left(P,Q,h\right)$ for any strongly non-zero element $h \in H$.  If $\left(N+1\right)X = \mcO$, then $n$ is not a strong G-pseudoprime at $X$. Otherwise, (letting $N + 1 = 2^s t$, where $t$ is odd) we must have $tQ = \mcO$ in $E\left(\Z/q\Z\right)$ since $Q$ has odd order. But $tP \neq \mcO \in E(\Z/p\Z)$ because $P$ has even order, so $tX \neq \mcO \in E(\Z/N\Z)$. Because $tQ = \mcO \bmod{q}$, $tX$ is not strongly nonzero and thus $2^rtX$ cannot have the form $\left(x: 0: 1\right)$ for some $x \in \Z/N\Z$. Therefore $N$ is not a strong G-pseudoprime at $X$.

\item For all primes $q \mid N$ with $q \neq p$, $|E\left(\Z/q\Z\right)|$ is a power of $2$. If $|E\left(\Z/p\Z\right)|$ is not a power of two, by the first Sylow theorem there exists a point $P$ of odd order $>1$ in $|E\left(\Z/p\Z\right)|$. Then we can construct a point $X$ as in Case $2$ with $P$ and a point $Q$ of even order from $|E\left(\Z/q\Z\right)|$ for some $q \mid N$. For the rest of the section assume $|E\left(\Z/p\Z\right)|$ is a power of $2$.
Since by Definition \ref{def:strong G-psp}, all prime factors of $N$ must be $\geq 5$, we have that $|E\left(\Z/p\Z\right)| = p + 1 \geq 8$.
Recall that the structure of an elliptic curve over a finite field is the product of two cyclic groups.  Therefore one of the cyclic groups must contain at least $\sqrt{8}>2$ elements and divide a power of $2$. It follows that we can find of point $P$ of order $4$ in $E\left(\Z/p\Z\right)$.

Let $q \mid N$ be a prime, $q \neq p$. Since
$\vert E\left(\Z/q\Z\right)\vert$ is a power of $2$ there exists a point $Q\in E\left(\Z/q\Z\right)$ of order $2$. Write $E\left(\Z/N\Z\right) = E\left(\Z/p\Z\right) \oplus E\left(\Z/q\Z\right) \oplus H$ for some group $H$. Take the strongly nonzero point $X = \left(P,Q,h\right)$ for any strongly nonzero $h \in H$. Then $2tX \neq \mcO$ since $4 \mid \vert P \vert$, but $2tX$ is not strongly nonzero since $2Q = \mcO$. Thus $2^rtX$  cannot have the form $\left(x: 0: 1\right)$ for some $x \in \Z/N\Z$. Therefore $N$ is not a strong G-pseudoprime at $X$.
  \end{enumerate}
\end{proof}

\begin{theorem}\label{lemma:order15}
Let $E/\mathbb{Q}$ be an elliptic curve. Then an odd composite number $N$ is a strong S-pseudoprime for all strongly non-zero points $P \in E\left(\Z/{N\Z}\right)$ if and only if $E$ has good reduction at $p$ for every prime $p \mid N$ and either 
\begin{enumerate}[(i)]
  \item $\epsilon_{N,p}\left(E\right) \mid t$ for all primes $p \mid N$ or
  \item $\epsilon_{N,p}\left(E\right) \mid 2t$ and $E\left(\Z/p\Z\right) \cong \Z/2\Z \oplus \Z/2\Z$ or $\Z/2\Z$ for all primes $p \mid N$.
\end{enumerate}
\end{theorem}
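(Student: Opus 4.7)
The plan is to decompose the problem prime by prime using the isomorphism $E(\Z/N\Z) \cong \bigoplus_{p \mid N} E(\Z/p^{e_p}\Z)$ with $e_p = \nu_p(N)$. Good reduction is built into the strong S-pseudoprime definition, so the content of the theorem is the equivalence between the strong S-pseudoprime condition at every strongly non-zero point and the numerical alternatives (i) and (ii). The key observation linking them is that a point of the form $(x:0:1) \in E(\Z/N\Z)$ has $z = 1$ coprime to $N$, so by the Chinese Remainder Theorem we have $(2^r t)P \equiv (x:0:1) \pmod N$ if and only if $(2^r t)P_p$ is a non-identity $2$-torsion point of $E(\Z/p^{e_p}\Z)$ for every $p \mid N$. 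Writing $|P_p| = 2^{a_p} m_p$ with $m_p$ odd, case (i) of Definition \ref{def:strong S-psp} then becomes ``$a_p = 0$ and $m_p \mid t$ for every $p$,'' and case (ii) becomes ``there is a single $r$ with $0 \le r \le s-1$ such that $a_p = r+1$ and $m_p \mid t$ for every $p$.'' Both clauses require the $2$-adic valuation $a_p$ to be constant in $p$.

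For the forward direction, I would fix a prime $p_0 \mid N$ and vary $P_{p_0}$ over all strongly non-zero points of $E(\Z/p_0^{e_{p_0}}\Z)$ while holding the other components fixed at arbitrary strongly non-zero choices, which are available by Lemma \ref{lemma:order10}. The requirement that every resulting global $P$ be a strong S-pseudoprime forces the $2$-part $a_{p_0}$ to equal a common value $a$ that does not depend on the choice of $P_{p_0}$, and varying $p_0$ shows the same $a$ must work at every prime. Because $p$ is odd, the kernel of reduction $E(\Z/p^{e_p}\Z) \to E(\Z/p\Z)$ is a $p$-group, so the $2$-part of $|P_p|$ equals the $2$-part of the order of its reduction, and as $P_p$ ranges over strongly non-zero points the reduction ranges over every non-identity element of $E(\Z/p\Z)$. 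Hence every non-identity element of $E(\Z/p\Z)$ has the same $2$-part $2^a$. A short structural argument on finite abelian groups then forces either $a = 0$ (so $|E(\Z/p\Z)|$ is odd) or $a = 1$ (so the $2$-Sylow of $E(\Z/p\Z)$ is elementary abelian, giving $E(\Z/p\Z) \cong \Z/2\Z$ or $\Z/2\Z \oplus \Z/2\Z$); any larger $a$ is impossible because a cyclic summand $\Z/2^a\Z$ with $a \ge 2$ contains elements of order $2^{a-1}$. Combining this with $m_p \mid t$, which by Corollary \ref{lemma:order13} is equivalent to the odd part of $\epsilon_{N,p}(E)$ dividing $t$, we obtain (i) when $a = 0$ and (ii) when $a = 1$.

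The reverse direction is more routine. If (i) holds, then Corollary \ref{lemma:order13} gives $tP = \mcO$ for every $P \in E(\Z/N\Z)$, and clause (i) of Definition \ref{def:strong S-psp} applies. If (ii) holds, then for any strongly non-zero $P$ the reduction $P_p$ has order $2 m_p$ with $m_p \mid t$, since its further reduction to $E(\Z/p\Z)$ is a non-identity element of an elementary abelian $2$-group and so has order $2$. Because $t/m_p$ is odd, one computes $tP_p = m_p P_p$, which is the unique element of order $2$ in $\langle P_p \rangle$ and hence a non-identity $2$-torsion point at each $p$. By the CRT characterization above, $tP$ has the form $(x:0:1)$ globally, so clause (ii) of the definition applies with $r = 0$.

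The main obstacle is the forward direction, specifically the use of freedom in choosing local components independently to force rigidity of the $2$-parts across primes, and the subsequent structural classification of finite abelian groups in which every non-identity element has the same $2$-part. Care is also needed to ensure that varying one local component $P_{p_0}$ produces a global point that is still strongly non-zero and that actually realizes every possible $2$-part achievable at $p_0$, which is where Lemma \ref{lemma:order9} and Lemma \ref{lemma:order10} enter.
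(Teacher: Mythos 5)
Your proposal is correct and follows essentially the same route as the paper's proof: both decompose $E(\Z/N\Z)$ by CRT, identify the points $(x:0:1)$ as strongly non-zero $2$-torsion in every local component, swap one local component at a time to force the $2$-adic valuations of the component orders to be constant, and then descend to $E(\Z/p\Z)$ (using that the reduction kernel is a $p$-group) to conclude that this group is trivial or elementary abelian of exponent $2$. The only difference is organizational: the paper splits off case (i) by citing the strong S-Carmichael criterion of \cite{REU2017} and establishes $r=0$ and $tP\neq\mcO$ via two explicitly constructed mixed points $Y$ and $Y'$, whereas you treat both cases uniformly through the common valuation $a_p$.
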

\begin{proof}

\cite{REU 2017} show condition (i) is equivalent to $N$ being a strong S-Carmichael number  and therefore for all strongly non-zero points $P \in E\left(\Z/{N\Z}\right)$, $N$ is a strong S-pseudoprime.
We will prove $N$ is not a strong S-Carmichael number for a curve $E$ and $N$ is a strong S-pseudoprime at all strongly nonzero points $P \in E$ if and only if condition (ii) is met. For notational purposes let $P \cong\left(P_{p_1},P_{p_2}...\right)$ represent the decomposition of the point $P \in E\left(\Z/N\Z\right)$ into $\bigoplus_{p \mid N} E\left(\Z/p_i^{\nu_{p_i}\left(N\right)}\Z\right)$ with $P_{p_i} \in E\left(\Z/p_i^{\nu_{p_i}\left(N\right)}\Z\right)$.
\\
 Let $\epsilon_{N,p}\left(E\right) \mid 2t$ and $E\left(\Z/p\Z\right) \cong \Z/2\Z \oplus \Z/2\Z$ or $\Z/2\Z$ for all primes $p \mid N$. Notice $N$ cannot be a strong S-Carmichael number since there exists a point in $E(\Z/p^{\nu_p(N)}\Z)$ of order divisible by $2$ and therefore $\epsilon_{N,p}\left(E\right) \nmid t$. For any strongly non-zero point $P \cong\left(P_{p_1},P_{p_2} \ldots \right)$, consider $P_{p_i} \in E\left(\Z/p_i^{\ord_{p_i}\left(N\right)}\Z\right)$. Let $\sigma:E\left(\Z/p_i^{\ord_{p_i}\left(N\right)}\Z\right) \rightarrow E\left(\Z/p_i\Z\right)$ be the natural homomorphism. Since $P_{p_i}$ is strongly nonzero, $\sigma\left(P_{p_i}\right)$ is strongly nonzero and thus $\vert \sigma\left(P_{p_i}\right)\vert=2$. Thus $\sigma\left(tP_{p_i}\right) = t\sigma\left(P_{p_i}\right) = \sigma\left(P_{p_i}\right) \neq \mcO$ and so $tP_{p_i}$ is a strongly non-zero point for all $p_i \mid N$. Thus $tP$ is a strongly non-zero point in $E\left(\Z/N\Z\right)$. Since $\epsilon_{N,p}\left(E\right) \mid 2t$ for all $p \mid N$, we have $2tP = \mcO$. Therefore $tP$ must be a strongly non-zero point of order $2$. Therefore $N$ is a strong S-pseudoprime for $P$.
\\\\
Conversely, let $N$ be a strong S-pseudoprime for all strongly nonzero points in $E\left(\Z/{N\Z}\right)$ and $N$ not be a strong S-Carmichael number for $E$. Then there exists a point $Q \in E(\Z/N\Z)$ such that $tQ \neq \mcO$. By Lemma \ref{lemma:order10}, there exists some strongly non-zero point $P$ with $\order\left(Q\right)\mid \order\left(P\right)$, thus $tP \neq \mcO$. Then, by assumption that $N$ is a strong S-pseudoprime for all strongly nonzero points $P \in E\left(\Z/{N\Z}\right)$, $2^{r}tP = \left(x: 0: 1\right)$ for some $0 \leq r < s$ and some $x \in \Z/N\Z$. Fix some strongly nonzero point $P \in E\left(\Z/{N\Z}\right)$.

Assume that $r>0$ and let $T=2^{r}tP$. Note that $T_{p_i} = 2^{r}t{P_{p_i}} \in E(\Z/p_i^{\nu_{p_i}(N)}\Z)$ is a strongly nonzero point of order $2$ in $E(\Z/p_i^{\nu_{p_i}(N)} \Z)$ for all $p_i \mid N$ and $2T = \mcO \in E(\Z/N\Z)$. Construct the strongly nonzero point $Y \cong \left(T_{p_1},P_{p_2}, \ldots, P_{p_i},\ldots\right)$. Notice $2^{r+1}tY = \mcO$, $2^{r}tY \neq \mcO$, and $2^{r}tY$ is a non-zero point since $2T_{p_1}=\mcO$ in $E\left(\Z/p_1^{\ord_{p_1}\left(N\right)}\Z\right)$, thus $N$ is not a strong elliptic S-pseudoprime for the strongly non-zero point $Y$, a contradiction. Therefore $r=0$. 

Assume for the sake of contradiction there exists a strongly non-zero point $X$ such that $tX=\mcO$. Construct the strongly nonzero point $Y' \cong \left(X_{p_1},P_{p_2}, \ldots, P_{p_i},\ldots \right)$. Notice $tY' \neq \mcO$ since $tP_{p_2} \neq \mcO$. Since $2^itX_{p_1} = \mcO$ for all $i \geq 0$, $2^i t Y'$ is not a strongly non-zero point for all $i \ge 0$. Thus $N$ is not strong elliptic S-pseudoprime at the point $Y'$.
Therefore for all strongly non-zero points $P \in E\left(\Z/N\Z\right)$, $tP \neq \mcO$.\\

Thus for every strongly non-zero point $P\cong (P_{p_1},P_{p_2}..)$, $2tP = \mcO$ and so $\epsilon_{N,p} \mid 2t$ . We also conclude that $\ord_2(\vert P_{p_i}\vert) = 1$. Let $\sigma:E\left(\Z/{p_i^{\ord_{p_i}\left(N\right)}\Z}\right) \rightarrow E\left(\Z/p_i\Z\right)$ be the natural homomorphism. Since $ker(\sigma) = \mathbb{Z}/p^{\ord_{p_i}-1}\mathbb{Z}$ and $p_i$ is odd, 
$\ord_2(\vert \sigma(P_{p_i}) \vert) = \ord_2(\vert P_{p_i} \vert) = 1$. Since every nonzero point in $X \in E(\mathbb{Z}/p_i\mathbb{Z}$ can be lifted to a strongly nonzero point $X' \in E(\mathbb{Z}/N\mathbb{Z})$ such that $\sigma(X') = X$, $\ord_2(X) = 1$. 

Thus, there are no non-zero points in $E(\mathbb{Z}/p\mathbb{Z})$ have odd order $>1$ or order $4$ thus every non-zero point has order $2$. Therefore $E\left(\Z/p\Z\right) \cong \Z/{2\Z} \oplus \Z/2\Z$ or $E\left(\Z/p\Z\right) \cong \Z/2\Z$.
\end{proof}

\section{Point-wise Probabilities for Strong Elliptic Pseudoprimes}

It is known that no composite number can be a strong G-pseudoprime for all points on a given elliptic curve (see \cite{Ekstrom}), so we now ask: for how many of the points on a given elliptic curve can a given composite number be a strong G-pseudoprime? Similarly, no composite number can be a strong S-pseudoprime for all points on all elliptic curves, which motivates the following question: given a fixed composite number $N$, what is the probability that $N$ is a strong elliptic S-pseudoprime for a randomly chosen point on a randomly chosen elliptic curve?
\begin{theorem}\label{strong gpseudoprime}
A composite number $N$ is a strong elliptic G-pseudoprime for at most $5/8$ of the points in $E\left(\mathbb{Z}/N\mathbb{Z}\right)$.
\end{theorem}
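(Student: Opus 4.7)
The plan is to decompose $E(\mathbb{Z}/N\mathbb{Z})$ via the Chinese Remainder isomorphism, restate the strong G-pseudoprime condition in terms of the 2-adic valuations of the component orders of $P$, and bound the resulting fraction of ``liar'' points via a Cauchy-Schwarz estimate on the 2-Sylow subgroups of the factors. Write $N+1 = 2^s t$ with $t$ odd and decompose $E(\mathbb{Z}/N\mathbb{Z}) \cong \bigoplus_{p \mid N} G_p$ with $G_p := E(\mathbb{Z}/p^{\nu_p(N)}\mathbb{Z})$. For each component write $|P_p| = 2^{\alpha_p} m_p$ with $m_p$ odd. Since $(x:0:1)$ restricts on each CRT factor to a nonidentity 2-torsion point, a short order calculation gives $(2^r t)P \equiv (x:0:1) \bmod N$ iff for every $p \mid N$, $\alpha_p = r+1$ and $m_p \mid t$. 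Combined with condition (i), $N$ is a strong G-pseudoprime for $P$ iff there is a common $k \in \{0,1,\ldots,s\}$ with $\alpha_p = k$ and $m_p \mid t$ for every prime $p \mid N$.

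Splitting $G_p = H_p \oplus O_p$ into 2-Sylow and odd parts makes these two constraints independent. Writing $h_p^{(k)}$ for the fraction of $H_p$ of order exactly $2^k$ and $f_p$ for the fraction of $O_p$ killed by $t$, the liar fraction equals $F \cdot Q$ where
\[
F := \prod_{p \mid N} f_p \le 1, \qquad Q := \sum_{k=0}^{s} \prod_{p \mid N} h_p^{(k)}.
\]
Since each $h_p^{(k)} \le 1$, discarding all but two distinct prime factors $p_1, p_2 \mid N$ gives $Q \le \langle \vec h_{p_1}, \vec h_{p_2}\rangle$, and Cauchy--Schwarz yields $\langle \vec h_{p_1}, \vec h_{p_2}\rangle \le \sqrt{S(H_{p_1})\, S(H_{p_2})}$, where $S(H) := \sum_k (h^{(k)})^2$.

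Because $H_p$ is the 2-Sylow of an elliptic curve group it is a product of at most two cyclic 2-groups, i.e.\ $H_p \cong \mathbb{Z}/2^{b_1} \oplus \mathbb{Z}/2^{b_2}$ with $b_1 \le b_2$. The central combinatorial inequality is $S(H) \le 5/8$ for every nontrivial such $H$, with equality only at $H \cong (\mathbb{Z}/2\mathbb{Z})^2$. This follows by a three-case computation: rank-one $\mathbb{Z}/2^b$ yields $S = (4^b+2)/(3\cdot 4^b) \le 1/2$; rank-two with $b_1 = b_2 = b$ yields $S = 3/5 + 2/(5 \cdot 16^b)$, peaking at $b=1$ with value $5/8$; and rank-two with $b_1 < b_2$ has $\max_k h^{(k)} \le 1/2$ and therefore $S \le 1/2$. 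Hence $Q \le 5/8$ whenever every $H_p$ is nontrivial. If some $H_p$ is trivial, then $Q = \prod_r 1/|H_r| \le 1/2$, using that $\left(\frac{-d}{N}\right) = -1$ forces an inert (hence supersingular) prime $p\mid N$ with $|E(\mathbb{F}_p)| = p+1$ even. The prime-power case $N = p^e$ is handled separately: using $\nu_2(p^e+1) = \nu_2(p+1)$ for odd $e$, a direct computation gives liar fraction at most $1/p^{e-1} \le 1/5$.

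The principal obstacle is the combinatorial bound $S(H) \le 5/8$. The intuition is that $(\mathbb{Z}/2\mathbb{Z})^2$ maximizes the ``collision probability'' because $3/4$ of its elements have order exactly $2$, and no other rank-$\le 2$ abelian 2-group achieves this concentration; but rigorously ruling out every other shape requires the careful enumeration indicated above, and identifying $5/8$ as the exact extremal constant rather than something looser like $3/4$ depends on the rank-two constraint specific to elliptic curve groups.
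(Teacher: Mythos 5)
Your proposal is correct, and its overall architecture coincides with the paper's: decompose $E(\Z/N\Z)$ by CRT, observe that a liar point must have the $2$-adic valuations of its component orders all equal, and bound the resulting ``collision'' probability, which is exactly the dot product $\vec h(s_1,r_1)\cdot\vec h(s_2,r_2)$ of the proportion vectors appearing in the paper's Lemma \ref{lemma:pointproportion}. Where you genuinely diverge is in how that dot product is maximized. The paper invokes Theorem \ref{thm:max h}, whose proof is described as ``a large amount of casework and computation'' and is deferred to an appendix; you instead apply Cauchy--Schwarz to reduce the four-parameter optimization to computing $S(H)=\lVert\vec h\rVert^2$ for a single rank-$\le 2$ abelian $2$-group, and the three clean closed forms ($S=(4^b+2)/(3\cdot 4^b)\le 1/2$ for cyclic, $S=3/5+2/(5\cdot 16^b)\le 5/8$ for $(\Z/2^b\Z)^2$, and $S\le\max_k h^{(k)}=1/2$ when $b_1<b_2$) check out, recovering the extremal value $5/8$ at $(\Z/2\Z)^2$, consistent with $h(1,1)\cdot h(1,1)=1/16+9/16$. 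This is a real simplification of the paper's hardest step, and it also pinpoints why $5/8$ rather than $3/4$ is the right constant. A second difference: the paper disposes of non-squarefree $N$ by reducing to $N=p^\alpha$ via Proposition \ref{distinct} and Lemma \ref{incompatpoints}, whereas your main argument already covers any $N$ with two distinct prime factors (squarefree or not), leaving only genuine prime powers for the easy $1/p^{e-1}$ bound; your separate treatment of the case where some $2$-Sylow is trivial (falling back on the inert, hence supersingular, prime forced by $\jacsym{-d}{N}=-1$) is necessary and correctly handled, since Cauchy--Schwarz alone would only give $\sqrt{5/8}$ there. The only step I would ask you to write out is the ``short order calculation'' that a point all of whose CRT components are nonidentity $2$-torsion is of the form $(x:0:1)\bmod N$ (it uses that the kernel of reduction mod $p$ has odd order $p^{a-1}$, so $2$-torsion is strongly non-zero and has $y=0$); but only the easy direction of that equivalence is needed for the upper bound, so this is a presentational point rather than a gap.
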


\begin{notation*}
    Let $E/\Q$ be an elliptic curve, and let $N$ be a positive integer such that $E$ has good reduction at $p$ for every prime $p \mid N$. For a positive integer $x$ and a prime power $p^a$, define
    \[
        J(x,p^a) := \frac{\# \{ P \in E(\Z/p^a\Z): \nu_2(\order(P)) = x \} }{\# E(\Z/p^a\Z)},
    \]
    where $\nu_2$ denotes 2-adic valuation.
    Let $N = p_1^{a_1}\cdots p_k^{a_k}$ be the prime factorization of $N$, so we may write $E(\Z/N\Z) \cong E(\Z/p_1^{a_1}\Z) \oplus \cdots \oplus E(\Z/p_k^{a_k}\Z)$. For each $P \in E(\Z/N\Z)$, we can consider $P$ as isomorphic to $(P_1,\ldots, P_k)$, where $P_i \in E(\Z/p_i^{a_i}\Z)$. Define 
    \begin{align}
    H(x,N) := \frac{\# \{ P \in E(\Z/N\Z): \nu_2(\order(P_i)) = x \text{ for all } 1 \le i \le k\}}{\# E(\Z/N\Z)} = \prod_{i=1}^{k} J(x,p_i^{a_i})
    \label{eq:H(x,N)}
    \end{align}
    where $\order(P_i)$ denotes the order of $P_i$ as an element of $E(\Z/p_i^{a_i}\Z)$.
\end{notation*}

\begin{proposition}\label{distinct}
Let $N=p_1^{\alpha_1}...p_k^{\alpha_k}$ and $M = N/p_k^{\alpha_k}$. Define $G\left(E,N\right) = \sum_{x \ge 0} H(x,N)$. Then $G\left(E,N\right) \leq G\left(E,M\right)$.
\end{proposition}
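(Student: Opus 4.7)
\medskip

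\noindent\textbf{Proof proposal.} The plan is to reduce the claim to a termwise comparison that follows immediately from the multiplicative structure of $H(x,N)$ built into the definition. First, I would observe that the prime factorization of $M$ is $M = p_1^{\alpha_1}\cdots p_{k-1}^{\alpha_{k-1}}$, so by the product formula \eqref{eq:H(x,N)} we have
\[
H(x,N) \;=\; \prod_{i=1}^{k} J(x, p_i^{\alpha_i}) \;=\; \Bigl(\prod_{i=1}^{k-1} J(x, p_i^{\alpha_i})\Bigr)\cdot J(x, p_k^{\alpha_k}) \;=\; H(x,M)\cdot J(x, p_k^{\alpha_k}).
\]
Next I would note that $J(x, p_k^{\alpha_k})$ is the fraction of points of $E(\Z/p_k^{\alpha_k}\Z)$ whose order has $2$-adic valuation equal to $x$; as a ratio of a subset size to the total group size it satisfies $0 \le J(x, p_k^{\alpha_k}) \le 1$. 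Consequently $H(x,N) \le H(x,M)$ for every $x \ge 0$.

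Summing this termwise inequality over all $x \ge 0$ yields
\[
G(E,N) \;=\; \sum_{x \ge 0} H(x,N) \;\le\; \sum_{x \ge 0} H(x,M) \;=\; G(E,M),
\]
which is the desired conclusion.

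There is no real obstacle here: the only substantive point is recognizing that the definition of $H(\,\cdot\,,N)$ as a product over primes dividing $N$ lets one peel off the factor associated to $p_k^{\alpha_k}$, and that each factor $J(x,p_k^{\alpha_k})$ is a probability and hence bounded above by $1$. Everything else is bookkeeping. In writing it up I would be careful to cite the product formula \eqref{eq:H(x,N)} explicitly so that the factorization $H(x,N) = H(x,M) \cdot J(x, p_k^{\alpha_k})$ is unambiguous, and to justify termwise summability (which is immediate because the series is a sum of nonnegative terms, each convergent series bounded by $1$).
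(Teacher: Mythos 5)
Your proof is correct and follows exactly the same route as the paper's: factor $H(x,N)=J(x,p_k^{\alpha_k})\,H(x,M)$ via the product formula, bound $J(x,p_k^{\alpha_k})\le 1$, and sum termwise. Your write-up simply makes explicit the steps the paper leaves compressed into a single display.
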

\begin{proof}
\begin{align*}
G(E,N) = \sum_{x} H(x,N) = \sum_{x} J(x,p_k^{a_k})H(x,M) \leq \sum_{x} H(x,M) = G(E,M).
\end{align*}
\end{proof}
\begin{lemma}\label{powermin}
Let $E/\Q$ be an elliptic curve with good reduction at a prime $p > 2$. Then for any $x \ge 0$ and any $a \in \N$, $J(x,p^a) = J(x,p)$. 
\end{lemma}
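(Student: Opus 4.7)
The plan is to use the natural reduction map $\sigma = \sigma_{a,1} \colon E(\Z/p^a\Z) \to E(\Z/p\Z)$ from Proposition~\ref{lemma:order3} and argue that when $p$ is odd, $\sigma$ preserves the $2$-adic valuation of the order of a point. Combined with the fact that $\sigma$ is surjective with fibers of equal size $p^{a-1}$, both the numerator and denominator of $J(x,p^a)$ will factor as $p^{a-1}$ times the corresponding quantities for $E(\Z/p\Z)$.

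The first step is to record that $\sigma$ is a surjective group homomorphism whose kernel has order $p^{a-1}$. The kernel size follows by iterating $|\ker \sigma_{i+1,i}| = p$, as noted in the proof of Corollary~\ref{lemma:order4}, and surjectivity then follows from the identity $\#E(\Z/p^a\Z) = p^{a-1}\#E(\Z/p\Z)$ recalled in Section~2. Consequently, every fiber $\sigma^{-1}(\bar P)$ has exactly $p^{a-1}$ elements.

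The main (and only nontrivial) step is to prove that for every $P \in E(\Z/p^a\Z)$,
\[
	\nu_2(\order(P)) = \nu_2(\order(\sigma(P))).
\]
Setting $m = \order(\sigma(P))$, the relation $\sigma(mP) = \mcO$ places $mP$ in $\ker(\sigma)$, so $\order(mP)$ divides $p^{a-1}$ and hence equals $p^j$ for some $0 \le j \le a-1$. Then $\order(P)$ divides $mp^j$; on the other hand $m \mid \order(P)$ since $\sigma$ is a homomorphism. Therefore $\order(P) = m \cdot p^i$ for some $0 \le i \le j$, and because $p$ is odd we conclude $\nu_2(\order(P)) = \nu_2(m) = \nu_2(\order(\sigma(P)))$. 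This is precisely where the hypothesis $p > 2$ is essential: if $p = 2$, the extra factor $p^i$ could change the $2$-adic valuation.

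With this established, the counting step is immediate: partition $\{P \in E(\Z/p^a\Z) : \nu_2(\order(P)) = x\}$ according to its image under $\sigma$. Each relevant fiber has size $p^{a-1}$, so
\[
	\#\{P \in E(\Z/p^a\Z) : \nu_2(\order(P)) = x\} = p^{a-1} \cdot \#\{\bar P \in E(\Z/p\Z) : \nu_2(\order(\bar P)) = x\}.
\]
Dividing both sides by $\#E(\Z/p^a\Z) = p^{a-1}\#E(\Z/p\Z)$ cancels the $p^{a-1}$ factors and yields $J(x,p^a) = J(x,p)$, completing the proof. The potential obstacle—ensuring the $2$-part of the order is not altered by the reduction map—reduces cleanly to the oddness of $p$, so there is no real technical difficulty beyond the bookkeeping above.
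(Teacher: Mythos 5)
Your proof is correct, and it takes a mildly but genuinely different route from the paper's. The paper argues via the group structure of $E(\Z/p^{a}\Z)$, splitting into two cases: in the non-anomalous case it writes $E(\Z/p^{a}\Z) \cong \Z/p^{a-1}\Z \oplus E(\Z/p\Z)$ and reads off $\nu_2(\order(P)) = \nu_2(\order(P_2))$ from $P \cong (P_1,P_2)$ because $p^{a-1}$ is odd; in the anomalous case every point has $p$-power, hence odd, order. You instead work with the reduction homomorphism $\sigma_{a,1}$ and use only that its kernel is a $p$-group of order $p^{a-1}$, deducing $\nu_2(\order(P)) = \nu_2(\order(\sigma(P)))$ and then counting fibers. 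The two arguments rest on the same fact — the discrepancy between $E(\Z/p^{a}\Z)$ and $E(\Z/p\Z)$ is entirely $p$-primary and $p$ is odd — but your version buys uniformity: it needs no case split on whether $p$ is anomalous and does not rely on the reduction sequence splitting as a direct sum, while also making the fiber-counting step (which the paper leaves implicit in the decomposition) explicit. The paper's version, in exchange, makes the $\nu_2$ computation immediate once the structure theorem is granted. Your chain $m \mid \order(P) \mid mp^{j}$, hence $\order(P) = mp^{i}$, is the one point that needs the small divisibility check you supplied, and it is correct.
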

\begin{proof}
We prove the claim of the theorem by considering several cases.
\begin{enumerate}[\text{Case} (1):] 
\item $E\left(\mathbb{Z}/p_k^{\alpha_k}\mathbb{Z}\right) \cong \mathbb{Z}/p_k^{\alpha_k-1}\mathbb{Z} \oplus E\left(\mathbb{Z}/p_k\mathbb{Z}\right)$.\\

Take any point $P \cong \left(P_1, P_2\right)$ in $E\left(\mathbb{Z}/p_k^{\alpha_k}\mathbb{Z}\right) \cong \mathbb{Z}/p_k^{\alpha_k-1}\mathbb{Z} \oplus E\left(\mathbb{Z}/p_k\mathbb{Z}\right)$. Note that the $\order\left(P\right) = \lcm\left(\order\left(P_1\right), \order\left(P_2\right)\right)$. $2 \nmid p_k^{\alpha_k-1}$, so $\nu _2\left(\order\left(P\right)\right) = \nu _2\left(\order\left(P_2\right)\right)$.
 
\item $E\left(\mathbb{Z}/p_k^{\alpha_k}\mathbb{Z}\right) \cong \mathbb{Z}/p_k^{\alpha_k}\mathbb{Z}$.\\
Since $p_k$ is anomalous, $E\left(\mathbb{Z}/p_k\mathbb{Z}\right) \cong \mathbb{Z}/p_k\mathbb{Z}$. Since $p_k$ is odd, all of the points in $E\left(\mathbb{Z}/p_k\mathbb{Z}\right)$ have odd order, and all of the points in $E\left(\mathbb{Z}/p_k^{\alpha_k}\mathbb{Z}\right)$ have odd order.
\end{enumerate}
\end{proof}

\begin{corollary}
    Let $N = p_1^{a_1}\cdots p_k^{a_k}$ with $p_i \neq 2$, and define $M = p_1p_2\cdots p_k$. Then for all $x \ge 0$, $H(x,N) = H(x,M)$.
\end{corollary}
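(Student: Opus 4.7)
The plan is to simply chain together the factorization identity \eqref{eq:H(x,N)} with the previous lemma, applied prime by prime. By the definition of $H(x,N)$ in \eqref{eq:H(x,N)}, we have
\[
    H(x,N) = \prod_{i=1}^{k} J(x, p_i^{a_i}) \qquad \text{and} \qquad H(x,M) = \prod_{i=1}^{k} J(x, p_i),
\]
since the prime factorizations of $N$ and $M$ involve the same primes $p_1, \ldots, p_k$, only with different exponents.

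Since each $p_i$ is an odd prime at which $E$ has good reduction (good reduction is inherited by $N$ and hence by $M$), Lemma \ref{powermin} gives $J(x, p_i^{a_i}) = J(x, p_i)$ for every $i$ and every $x \ge 0$. Multiplying these equalities across $i = 1, \ldots, k$ yields $H(x,N) = H(x,M)$, as desired. No further case analysis or obstacle arises; the corollary is essentially a bookkeeping consequence of the multiplicativity in \eqref{eq:H(x,N)} together with Lemma \ref{powermin}.
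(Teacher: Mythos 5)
Your proposal is correct and is exactly the paper's argument: the paper likewise derives the corollary from the product formula in \eqref{eq:H(x,N)} together with Lemma \ref{powermin} applied to each odd prime $p_i$. Nothing further is needed.
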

The statement follows from Lemma \ref{powermin} and the definition of $H(x,N)$ in \ref{eq:H(x,N)}. As a result, we may now assume without loss of generality that $a_i = 1$ for all $i$.

 \begin{lemma} \label{lemma:pointproportion}
Consider the group $G \cong \mathbb{Z}/2^st\mathbb{Z} \oplus \mathbb{Z}/2^rw\mathbb{Z}$, where $t$ and $w$ are odd and $2^st \mid 2^rw$. The proportion of points $P\in G$ such that $\nu _2\left(\vert P\vert \right) = k$ is 
\begin{enumerate}[(i)]
\item $1/2^{r+s}$ if $k = 0$,
\item $3\left(2^{2k-2}\right)/2^{r+s}$ if $1 \leq k \leq s$,
\item $2^{s + k -1}/2^{r+s}$ if $s+1 \leq k \leq r$, and
\item $0$ for $k > r$.
\end{enumerate}
\end{lemma}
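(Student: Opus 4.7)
The plan is to reduce the question to a counting problem in the $2$-primary part of $G$ and then evaluate a simple difference of cumulative counts. Since $2^st \mid 2^rw$ with $t,w$ odd, we have $s \le r$ and $t \mid w$. Using the Chinese remainder theorem on each factor of $G$, write
\[
G \;\cong\; \Z/2^s\Z \,\oplus\, \Z/t\Z \,\oplus\, \Z/2^r\Z \,\oplus\, \Z/w\Z,
\]
and accordingly write any $P \in G$ as $(a,b,c,d)$. Because $t$ and $w$ are odd, the orders of $b$ and $d$ contribute nothing to $\nu_2(|P|)$, so
\[
\nu_2\bigl(|P|\bigr) \;=\; \max\bigl(\nu_2(|a|),\,\nu_2(|c|)\bigr).
\]
Hence the number of $P\in G$ with $\nu_2(|P|)=k$ equals $tw$ times the number of pairs $(a,c)\in\Z/2^s\Z\times\Z/2^r\Z$ with $\max(\nu_2(|a|),\nu_2(|c|))=k$, and dividing by $|G|=2^{r+s}tw$ turns the problem into counting pairs $(a,c)$ and dividing by $2^{r+s}$.

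Next I would record the following standard count: in $\Z/2^m\Z$, the identity is the unique element with $\nu_2(\text{order})=0$, and for $1\le j\le m$ there are exactly $\varphi(2^j)=2^{j-1}$ elements with $\nu_2(\text{order})=j$; for $j>m$ there are none. Summing, the number of elements $a\in\Z/2^m\Z$ with $\nu_2(|a|)\le k$ equals $F_m(k):=2^{\min(k,m)}$. Writing $N_k$ for the number of pairs with $\max(\nu_2(|a|),\nu_2(|c|))=k$, I would use the trivial identity
\[
N_k \;=\; F_s(k)F_r(k) \;-\; F_s(k-1)F_r(k-1),
\]
where $F_\bullet(-1):=0$.

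The remaining work is then a four-case evaluation, matching the four bullets in the lemma. For $k=0$ one gets $N_0 = 1\cdot 1 = 1$. For $1\le k\le s$, using $s\le r$ gives $F_s(k)=F_r(k)=2^k$ and $F_s(k-1)=F_r(k-1)=2^{k-1}$, so $N_k = 2^{2k}-2^{2k-2} = 3\cdot 2^{2k-2}$. For $s+1\le k\le r$ one has $F_s(k)=F_s(k-1)=2^s$ while $F_r(k)=2^k$ and $F_r(k-1)=2^{k-1}$, yielding $N_k = 2^{s+k}-2^{s+k-1} = 2^{s+k-1}$. Finally, for $k>r$ both $F_s$ and $F_r$ have stabilized, so $N_k=0$. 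Dividing each $N_k$ by $2^{r+s}$ gives precisely the four claimed proportions.

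The only real subtlety is keeping the boundary cases $k=s$ and $k=r$ in the correct bucket; a careful definition $F_m(k)=2^{\min(k,m)}$ and treating $k=0$ separately (to avoid $F(-1)$) handles this cleanly, so there is no serious obstacle beyond bookkeeping.
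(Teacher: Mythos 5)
Your proof is correct and takes essentially the same approach as the paper's: both reduce to counting, in each cyclic factor, the elements whose order has a given $2$-adic valuation, and then combine the two factors by noting that the maximum of the coordinate valuations governs $\nu_2(|P|)$. Your telescoping difference of cumulative counts $F_s(k)F_r(k)-F_s(k-1)F_r(k-1)$ is just a tidier packaging of the paper's inclusion--exclusion step.
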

\begin{proof}
First let us deal with the proportion of points $P \in G$ of odd order, i.e. when $\nu_2(\vert P\vert) = 0$. Using the isomorphism, we can consider $P \in G$ as the pair of points $(P_1, P_2)$, where $P_1 \in \Z/2^s t\Z$ and $P_2 \in \Z/2^r w\Z$. We can consider $P_1$ to be an integer modulo $2^st$ in the set $\{0,1,2,\ldots, 2^st -1\}$. 
\\\\
\textit{Claim 1.} The order of $P_1$ is odd if and only if $2^s \mid P_1$.
\\
\textit{Proof of Claim 1.} Suppose that $P_1$ has odd order $d$. Then $dP_1 \equiv 0 \bmod{2^st}$. In particular, $2^{s}t \mid dP_1$. Since $d$ is odd, $2^{s} \mid P_1$. \\
Conversely, suppose $2^s \mid P_1$. Let $d$ be the order of such an element $P_1$. Then $dP_1 \equiv 0 \bmod{2^{s}t}$, i.e. $2^s t \mid dP_1$. If $d$ were even, then we still have that $2^s t \mid (d/2)P_1$, since $2^s \mid P_1$, contradicting the minimality of $d$. Therefore $d$ must be odd, concluding the proof of Claim 1.
\\\\
Therefore to count the points of odd order in $\Z/2^st\Z$, it suffices to count the elements $P_1$ in the set $\{0,1,2,\ldots, 2^st - 1\}$ such that $2^s \mid P$. These are exactly the elements $\{0, 2^s, 2 \cdot 2^s, \ldots, (t-1)\cdot 2^s\}$, of which there are $t$. Because an element of odd order in $G$ must correspond to elements of odd order in both $\Z/2^st \Z$ and $\Z/2^rw\Z$, we can see that there are $tw$ points of odd order, including the identity. Dividing by $|G| = 2^{r+s}tw$ gives us part (i) of the lemma.
\\\\
We can also extend this to counting points $P_1$ of order $d$ such that $\nu_2(d) = k$ for some $1 \le k \le s$:
\\\\
\textit{Claim 2.} Suppose that the order of $P_1 \in \Z/2^st\Z$ is $d$. Then $\nu_2(d) = k$ if and only if $2^{s-k} \mid \mid P_1$.
\\
\textit{Proof of Claim 2.} Suppose that $\nu_2(d) = k$. As in the previous claim, we have $dP_1 \equiv 0 \bmod{2^st}$, so $2^st \mid dP_1$, so $2^s \mid dP_1$. By the minimality of $d$, $2^st \nmid (d/2)P_1$. Therefore $2^{s} \mid \mid dP_1$. Since we know that $2^k \mid \mid d$, we must have that $2^{s - k} \mid \mid P_1$.\\
Suppose instead that $2^{s-k} \mid \mid P_1$. Let the order of $P_1$ be $d$. Then $2^st \mid dP_1$, so $2^k t \mid d\cdot P_1/2^{s-k}$. Since $P_1/2^{s-k}$ is odd, we must have that $2^k \mid d$. Furthermore, we cannot have that $2^{k+1} \mid d$, otherwise $2^st \mid (d/2)P_1$, contradicting the minimality of $d$. Therefore $\nu_2(d) = k$, concluding the proof of Claim 2.
\\\\
From Claim 2, we see that to count the points $P_1 \in \Z/2^st\Z$ with order $d$ such that $\nu_2(d) = k$ for some $1 \le k \le s$, it suffices to count the number of multiples of $2^{s-k}$ in the set $\{0,1,2,\ldots, 2^st - 1 \}$ which are \emph{not} multiples of $2^{s-k+1}$. There are exactly $2^st/2^{s-k} - 2^{s}t/2^{s-k+1}= 2^{k-1}t$ of these. Furthermore, there are exactly $2^{k}$ points $P_1$ such that $\nu_2(\order(P_1)) \le k$.

Let $k$ be an integer such that $0 \le k \le s$. Note that
\begin{align*} 
	&\#\{P \in G  \hspace{0.1cm}|\hspace{0.1cm} \nu_2(\order(P)) = k \} \\
	&\hspace{0.5cm}=\hspace{0.1cm}\#\{P_1 \in \Z/2^st\Z \hspace{0.1cm}|\hspace{0.1cm} \nu_2\vert P_1\vert = k\} \times \#\{P_2 \in \Z/2^rw\Z \hspace{0.1cm}|\hspace{0.1cm} \nu_2\vert P_2\vert \le k\} \\
    &\hspace{0.5cm}+\hspace{0.1cm} \#\{P_1 \in \Z/2^st\Z \hspace{0.1cm}|\hspace{0.1cm} \nu_2\vert P_1\vert  \le k\} \times \#\{P_2 \in \Z/2^rw\Z \hspace{0.1cm}|\hspace{0.1cm} \nu_2\vert P_2\vert  = k\} \\
    &\hspace{0.5cm}- \hspace{0.1cm}\#\{P_1 \in \Z/2^st\Z \hspace{0.1cm}|\hspace{0.1cm} \nu_2\vert P_1\vert = k\} \times \#\{P_2 \in \Z/2^rw\Z \hspace{0.1cm}|\hspace{0.1cm} \nu_2\vert P_2\vert = k\}
\end{align*}
This simplifies to the following 
\begin{align*}
	\#\{P \in G  \hspace{0.1cm}|\hspace{0.1cm} \nu_2(\order(P)) = k \} &= 2^{k-1}t\cdot 2^{k}w + 2^{k}t \cdot 2^{k-1} w - 2^{k-1} t \cdot 2^{k-1} w\\
    &= 3\cdot 2^{2k-2}tw.
\end{align*}
Dividing this final expression by the order of the group $|G| = 2^{r+s}tw$ gives us part (ii) of the lemma.
\\\\
We are interested in counting the number of points $P$ such that 
\[
\#\{P \in G  \hspace{0.1cm}|\hspace{0.1cm} \nu_2(\order(P)) = k \}
\]
when $s < k \le r$. Because there are no points $P_1 \in \Z/2^{s}t\Z$ such that $\nu_2(\order(P_1)) = k$, we have
\begin{align*}
	&\#\{P \in G  \hspace{0.1cm}|\hspace{0.1cm} \nu_2\vert P\vert = k \} \\
    &\hspace{0.5cm}=\hspace{0.1cm}\#\{P_1 \in \Z/2^st\Z \hspace{0.1cm}|\hspace{0.1cm} \nu_2\vert P_1\vert  \le k\} \times \#\{P_2 \in \Z/2^rw\Z \hspace{0.1cm}|\hspace{0.1cm} \nu_2\vert P_2\vert = k\} \\
    &\hspace{0.5cm}=\hspace{0.1cm}\#(\Z/2^st\Z) \times 2^{k-1}w \\
    &\hspace{0.5cm}=\hspace{0.1cm}2^{s + k - 1}tw.
\end{align*}
Dividing the above expression by $|G| = 2^{r+s}tw$ gives part (iii) of the lemma. Observe that the order of any element $P = (P_1, P_2) \in G$ has an order which is the LCM of the orders of $P_1 \in \Z/2^st \Z$ and $P_2 \in \Z/2^rw \Z$. Furthermore, since $d_1 := \order(P_1) \mid 2^st$ and $d_2 := \order(P_2) \mid 2^rw$, combined with the fact that $2^st \mid 2^rw \Rightarrow s \le r$, we have that $\lcm(d_1, d_2) \mid 2^r$, so $\nu_2(\order(P)) \le r$, which gives us part (iv) of the lemma.
\end{proof}

\begin{definition}
Consider the group $G \cong \mathbb{Z}/2^s t\mathbb{Z} \oplus \mathbb{Z}/2^r w\mathbb{Z}$, where $t$ and $w$ are odd and $2^st \mid 2^rw$. Define the vector $h\left(s,r\right)$ in $\mathbb{R}^{\infty}$ such that the $i^{th}$ coordinate of $h(s,r)$ is the proportion of points $P \in G$ such that $\nu _2\left(P\right) = i$.
\end{definition}
The proof of the following theorem involves a large amount of casework and computation and it is included in the Appendix section.
\begin{theorem} \label{thm:max h}
Suppose $s_1\le r_1$ and $s_2\le r_2$ and $r_1 \geq 1$. Then $\vec{h}(s_1, r_1)\cdot \vec{h}(s_2, r_2)$ is maximized when $r_1=r_2=s_1=s_2=1$.
\end{theorem}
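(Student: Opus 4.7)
The plan is to reduce the bilinear maximization to a single-vector norm estimate via Cauchy--Schwarz. The key observation is that, by Lemma \ref{lemma:pointproportion}, each entry $h(s,r)_k$ is the proportion of points $P$ in a finite group $\mathbb{Z}/2^st\mathbb{Z} \oplus \mathbb{Z}/2^rw\mathbb{Z}$ with $\nu_2(\order(P)) = k$, so $\vec{h}(s,r)$ is a nonnegative vector whose entries sum to $1$. Applying Cauchy--Schwarz,
\[
    \vec{h}(s_1,r_1) \cdot \vec{h}(s_2,r_2) \;\le\; \bigl\|\vec{h}(s_1,r_1)\bigr\|_2 \,\bigl\|\vec{h}(s_2,r_2)\bigr\|_2,
\]
so the theorem reduces to the uniform single-parameter bound $\|\vec{h}(s,r)\|_2^2 \le 5/8$ for every admissible pair $(s,r)$ with $s\le r$ and $r \ge 1$, together with identification of the equality case.

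To prove this norm bound I would use the explicit formula of Lemma \ref{lemma:pointproportion} to split $\sum_{k \ge 0} h(s,r)_k^2$ into three pieces: the $k=0$ term, which contributes $4^{-(r+s)}$; a geometric sum with ratio $16$ over the middle range $1 \le k \le s$; and a geometric sum with ratio $4$ over the tail range $s+1 \le k \le r$. Summing these series yields the closed form
\[
    \bigl\|\vec{h}(s,r)\bigr\|_2^2 \;=\; \frac{6 + 4 \cdot 16^s + 5 \cdot 4^{s+r}}{15 \cdot 4^{r+s}},
\]
and after clearing denominators the target inequality becomes equivalent to
\[
    4^{2s}\bigl(35 \cdot 4^{r-s} - 32\bigr) \;\ge\; 48.
\]
A short case split handles this: when $r > s$ the bracketed factor is already at least $35 \cdot 4 - 32 = 108$, so the inequality is strict (covering in particular the corner case $s = 0$); when $r = s$ the bound reduces to $3 \cdot 4^{2s} \ge 48$, which holds for $s \ge 1$ with equality exactly at $s = 1$.

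Finally, to extract the equality case I would observe that Cauchy--Schwarz is tight only if $\vec{h}(s_1,r_1)$ and $\vec{h}(s_2,r_2)$ are proportional; since both are probability vectors, proportionality forces them to coincide, and equality in the norm bound then forces $(s_1,r_1) = (s_2,r_2) = (1,1)$. A direct check at this point gives $\vec{h}(1,1) \cdot \vec{h}(1,1) = (1/4)^2 + (3/4)^2 = 5/8$, so the maximum is indeed attained at the claimed point. The main obstacle I anticipate is the careful bookkeeping when collapsing the two geometric sums and verifying that edge cases (the boundary $s = 0$ and the transition $s = r$) are absorbed consistently into the resulting formula; once the closed form is in hand, the rest of the argument is elementary.
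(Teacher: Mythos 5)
Your Cauchy--Schwarz reduction is a genuinely different and considerably slicker route than the paper's, which relegates the proof to an appendix and describes it as involving ``a large amount of casework and computation'': instead of comparing the bilinear form over all quadruples $(s_1,r_1,s_2,r_2)$, you bound each factor by its $\ell^2$ norm and are left with a one-parameter computation. I checked the details: with the entries from Lemma \ref{lemma:pointproportion}, the $k=0$ term contributes $4^{-(r+s)}$, the middle range sums to $3(16^s-1)/(5\cdot 4^{r+s})$, and the tail to $(4^{r+s}-16^s)/(3\cdot 4^{r+s})$, which combine to exactly your closed form $\bigl(6+4\cdot 16^s+5\cdot 4^{r+s}\bigr)/\bigl(15\cdot 4^{r+s}\bigr)$; the equivalence with $4^{2s}\bigl(35\cdot 4^{r-s}-32\bigr)\ge 48$ and the two-case verification are correct, as is the equality analysis (proportional probability vectors must coincide, and equality in the norm bound forces $(s,r)=(1,1)$, where the product is $1/16+9/16=5/8$).

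There is one case your reduction does not cover. The hypothesis imposes $r_1\ge 1$ but says nothing about $r_2$, and this is not vacuous: in the intended application the second prime $q$ may have $\#E(\Z/q\Z)$ odd, i.e.\ $s_2=r_2=0$. For that pair $\vec{h}(0,0)=(1,0,0,\dots)$ has squared norm $1>5/8$ (consistent with your own observation that $3\cdot 4^{2s}\ge 48$ fails at $s=r=0$), so Cauchy--Schwarz only yields the useless bound $\sqrt{5/8}\approx 0.79$. The fix is a one-line direct computation: in this case
\[
\vec{h}(s_1,r_1)\cdot\vec{h}(0,0)=h(s_1,r_1)_0=2^{-(r_1+s_1)}\le \tfrac12<\tfrac58,
\]
since $r_1\ge 1$. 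With that case added, your argument is complete.
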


\begin{lemma}\label{incompatpoints}
Let $p$ be a prime, $\alpha \geq 2$, and $E(\mathbb{Z}/p^{\alpha}\mathbb{Z})$ be an elliptic curve. The proportion of points in $E(\mathbb{Z}/p^{\alpha}\mathbb{Z})$ with order divisible by $p$ is at least $\frac{p^{\alpha-1}-1}{p^{\alpha-1}}$.
\end{lemma}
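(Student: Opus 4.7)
The plan is to exploit the fact that the kernel of the natural reduction homomorphism $\sigma_{\alpha,1}\colon E(\mathbb{Z}/p^\alpha\mathbb{Z}) \to E(\mathbb{Z}/p\mathbb{Z})$ is a $p$-group of order $p^{\alpha-1}$, which forces the Sylow $p$-subgroup of $G := E(\mathbb{Z}/p^\alpha\mathbb{Z})$ to be at least that large, and hence the complementary subgroup of elements of order coprime to $p$ to be correspondingly small.

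First, I would invoke Corollary \ref{lemma:order4} together with the computation $|\ker(\sigma_{\alpha,1})| = p^{\alpha-1}$ that appears in the proof of Lemma \ref{lemma:order7} to conclude that $K := \ker(\sigma_{\alpha,1})$ is a subgroup of $G$ of order $p^{\alpha-1}$ in which every element has $p$-power order. Next, using the standard primary decomposition of a finite abelian group, I would write $G \cong G_p \oplus G_{p'}$, where $G_p$ is the Sylow $p$-subgroup and $G_{p'}$ is the subgroup of elements whose order is coprime to $p$. Since $K$ is a $p$-subgroup, $K \subseteq G_p$, whence $|G_p| \geq p^{\alpha-1}$ and therefore $|G_{p'}| = |G|/|G_p| \leq |G|/p^{\alpha-1}$.

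Finally, observing that the points of $G$ whose order is \emph{not} divisible by $p$ are exactly the elements of $G_{p'}$, the proportion of points with order divisible by $p$ is
\[
1 - \frac{|G_{p'}|}{|G|} \;\geq\; 1 - \frac{1}{p^{\alpha-1}} \;=\; \frac{p^{\alpha-1}-1}{p^{\alpha-1}},
\]
which is the claimed bound. I do not expect a serious obstacle here: once the $p$-power-order structure of $K$ established earlier in the section is in hand, the result is a one-line consequence of the primary decomposition of finite abelian groups. The only minor point to double-check is that the paper's earlier material genuinely records both pieces of information about $K$—its order $p^{\alpha-1}$ and its $p$-primary nature—so that no auxiliary lemma about the formal group of $E$ needs to be cited afresh.
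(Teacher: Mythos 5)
Your proof is correct, but it takes a genuinely different route from the paper's. The paper argues by cases on the isomorphism type of $E(\mathbb{Z}/p^{\alpha}\mathbb{Z})$: either $E(\mathbb{Z}/p^{\alpha}\mathbb{Z}) \cong \mathbb{Z}/p^{\alpha}\mathbb{Z}$ (the anomalous cyclic case, where every non-identity element already has order divisible by $p$, giving the even stronger bound $\tfrac{p^{\alpha}-1}{p^{\alpha}}$), or $E(\mathbb{Z}/p^{\alpha}\mathbb{Z}) \cong \mathbb{Z}/p^{\alpha-1}\mathbb{Z} \oplus E(\mathbb{Z}/p\mathbb{Z})$, where it reads the bound off the first factor: $p$ divides $\operatorname{order}(P_1)$ for all but $1$ of the $p^{\alpha-1}$ choices of $P_1$. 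Your argument instead never invokes this structure result; it only needs that $\ker(\sigma_{\alpha,1})$ is a subgroup of order $p^{\alpha-1}$ consisting of elements of $p$-power order, which is exactly the content of Corollary \ref{lemma:order4} and the kernel computation in its proof (note the order computation $\lvert\ker(\sigma_{m,1})\rvert = p^{m-1}$ lives there, not in Lemma \ref{lemma:order7}, which only treats $\sigma_{n,n-1}$). The Sylow/primary-decomposition step then does the rest uniformly, since the elements of order prime to $p$ form the complement $G_{p'}$ of index $\lvert G_p\rvert \geq p^{\alpha-1}$. Your version is arguably cleaner: it is a single argument with no casework, and it sidesteps the group-structure claim for $E(\mathbb{Z}/p^{\alpha}\mathbb{Z})$ that the paper uses without citation; the only thing the paper's approach buys in exchange is the sharper constant $\tfrac{p^{\alpha}-1}{p^{\alpha}}$ in the anomalous cyclic case, which is not needed for the stated bound.
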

\begin{proof}
 We prove the claim of the theorem by considering several cases.
\begin{enumerate}[\text{Case} (1):]
\item p is anomalous for E and $E(\mathbb{Z}/p^{\alpha}\mathbb{Z}) \cong \mathbb{Z}/p^{\alpha}\mathbb{Z}$\\ Every element has order $p^k$ for some $0 \leq k \leq \alpha$. The only element with order 1 is the identity, so the proportion of points in E with order divisible by p is $\frac{p^{\alpha}-1}{p^{\alpha}}$.\\
\item : $E(\mathbb{Z}/p^{\alpha}\mathbb{Z}) \cong \mathbb{Z}/p^{\alpha-1}\mathbb{Z} \oplus E(\mathbb{Z}/p\mathbb{Z})$\\
If $P \cong (P_1, P_2)$ is in $\mathbb{Z}/p^{\alpha-1}\mathbb{Z} \oplus E(\mathbb{Z}/p\mathbb{Z})$, and if the order of $P_1$ is divisible by p, then the order of P is divisible by P. The proportion of points $P = (P_1, P_2)$ in $\mathbb{Z}/p^{\alpha-1}\mathbb{Z} \oplus E(\mathbb{Z}/p\mathbb{Z})$ with p dividing the order of $P_1$ is at least $\frac{p^{\alpha-1}-1}{p^{\alpha-1}}$.
\end{enumerate}
\end{proof}
Finally, we give the proof of Theorem \ref{strong gpseudoprime}.
\begin{proof}{(of Theorem \ref{strong gpseudoprime})}
 We prove the claim of the theorem by considering several cases.
\begin{enumerate}[\text{Case} (1):]
\item Suppose $N$ is not squarefree.
By Lemma \ref{distinct}, the maximum proportion of points that N can be a strong G-pseudoprime for will occur when $N = p^\alpha$. Suppose $N = p^{\alpha}$ and $E\left(\mathbb{Z}/N\mathbb{Z}\right) \cong E\left(\mathbb{Z}/p^{\alpha}\mathbb{Z}\right)$ with $\alpha > 1$. By Lemma \ref{incompatpoints}, the proportion of points in $E\left(\mathbb{Z}/p^{\alpha}\mathbb{Z}\right)$ with order divisible by p is at least $\frac{p^{\alpha-1}-1}{p^{\alpha-1}}$. Since $p \nmid N+1$, if P has order divisible by p, then $\left(N+1\right)P \not \equiv \mathcal{O}$ and N is not a strong G-pseudoprime for (E,P). So the proportion of points in $E\left(\mathbb{Z}/N\mathbb{Z}\right)$ for which $N$ is a strong elliptic G-pseudoprime is at most $\frac{1}{p} < \frac{5}{8}$.

\item Suppose $N$ is squarefree. Since $\left(\frac{-d}{N}\right) = -1$, there exists some prime $p$ such that $\left(\frac{-d}{p}\right) = -1$ thus $|E\left(\mathbb{Z}/p\mathbb{Z}\right)| = p + 1$. Since $N$ is composite and squarefree, there exists a prime $q \mid N$, $q \neq p$.
Suppose $E\left(\mathbb{Z}/p\mathbb{Z}\right) \cong \mathbb{Z}/2^{s_1}t_1\mathbb{Z} \oplus \mathbb{Z}/2^{r_1}w_1\mathbb{Z}$ where $t_1$ and $w_1$ are odd and $2^{s_1}t_1 \mid 2^{r_1}w_1$ and $E\left(\mathbb{Z}/q\mathbb{Z}\right) \cong \mathbb{Z}/2^{s_2}t_2\mathbb{Z} \oplus \mathbb{Z}/2^{r_2}w_2\mathbb{Z}$ where $t_2$ and $w_2$ are odd and $2^{s_2}t_2 \mid 2^{r_2}w_2$. 
N is a strong G-pseudoprime at a point $P = \left(P_1, P_2\right)$, where $P_1 \in \Emod{p}$ and $P_2 \in \Emod{q}$, only if $\nu _2\left(P_1\right) = \nu _2\left(P_2\right)$. The percentage of points that satisfy this is $h\left(s_1,r_1\right) \cdot h\left(s_2, r_2\right)$, with $r_1 \geq 1$ since $|E\left(\mathbb{Z}/p\mathbb{Z}\right)|$ is even. By Theorem \ref{thm:max h}, this percentage is at most $h\left(1,1\right) \cdot h\left(1, 1\right) = 5/8$ of the points in $E\left(\mathbb{Z}/N\mathbb{Z}\right)$.
\end{enumerate}
\end{proof}
\begin{lemma}\label{lemma:Eoverprimechar}
Let $p$ be an odd prime, and let $E/\Q: y^2= x^3+Ax+B$ be an elliptic curve that has good reduction at $p$. Write $\Emod{p} \cong \Zmod{2^st} \oplus \Zmod{2^rw}$, where $t, w>0$ are odd integers and $2^st \mid 2^rw$. Then
\begin{itemize}
\item $s=r=0$ if and only if $x^3+Ax+B$ is irreducible $\mod p$
\item $s = 0$ and $r \geq 1$ if and only if $x^3+Ax+B$ has one root $\mod p$
\item $r \geq s \geq 1$ if and only if $x^3+Ax+B$ has three roots $\mod p$
\end{itemize}
\end{lemma}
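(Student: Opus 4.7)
The plan is to compute the order of the 2-torsion subgroup $\Emod{p}[2]$ in two independent ways and match the results.

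From the group decomposition $\Emod{p} \cong \Zmod{2^st} \oplus \Zmod{2^rw}$ with $t,w$ odd, the 2-torsion is $\Zmod{2^st}[2] \oplus \Zmod{2^rw}[2]$, and each summand contributes a factor of $2$ exactly when its 2-part is nontrivial. The divisibility $2^st \mid 2^rw$ together with $t,w$ odd forces $s \leq r$, so the admissible regimes for $(s,r)$ reduce to exactly three: $(0,0)$ giving $|\Emod{p}[2]| = 1$; $s = 0$ with $r \geq 1$ giving $|\Emod{p}[2]| = 2$; and $r \geq s \geq 1$ giving $|\Emod{p}[2]| = 4$.

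Geometrically, on the short Weierstrass model $y^2 = x^3 + Ax + B$ the negation map sends $(x,y)$ to $(x,-y)$, so a non-identity affine point $(x : y : 1)$ is its own inverse if and only if $y = -y$, which (since $p$ is odd) is equivalent to $y = 0$, i.e.\ $x$ is a root of $x^3 + Ax + B$ in $\F_p$. Therefore $|\Emod{p}[2]| = 1 + \#\{x \in \F_p : x^3 + Ax + B \equiv 0 \bmod p\}$.

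To control the number of roots, note that good reduction of $E$ at the odd prime $p$ is equivalent to $4A^3 + 27B^2 \not\equiv 0 \bmod p$, which is (up to units) the discriminant of the cubic $x^3 + Ax + B$. Hence this cubic is separable mod $p$, and a separable cubic over a field has either $0$, $1$, or $3$ roots in that field --- having two distinct roots would force a third, by factoring two linear factors out of a degree-$3$ polynomial. Matching the three possible values of $|\Emod{p}[2]| \in \{1,2,4\}$ with the three possible root counts $\{0,1,3\}$ and with the three $(s,r)$-regimes above yields the three stated equivalences. The only real subtlety is the separability step, which is precisely what rules out the impossible ``cubic with exactly two roots'' case that would otherwise break the matching.
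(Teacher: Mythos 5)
Your proof is correct and follows essentially the same route as the paper: identify the nontrivial $2$-torsion points of $\Emod{p}$ with the roots of $x^3+Ax+B$ modulo $p$, and match the resulting count $|\Emod{p}[2]| \in \{1,2,4\}$ against the three possible $(s,r)$-regimes. You are in fact slightly more careful than the paper in one spot: you explicitly invoke separability of the cubic (from $4A^3+27B^2 \not\equiv 0 \bmod p$) to rule out the ``exactly two roots'' case, a point the paper's proof leaves implicit.
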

\begin{proof}
The points of order 2 on $\Emod{p}$ are exactly the roots of $x^3+Ax+B$ mod $p$. Note this only works for $p$ prime - this statement fails spectacularly for composite numbers. If there are no roots, then the P-Sylow theorems implies that $|\Emod{p}|$ is odd since there are no points of order 2. Hence $a_p$ is odd. If there is one root, there is one point of order 2, so exactly one of $r$ or $s$ must be nonzero. But by assumption $r\ge s$, so $s=0$ and $r\ge 1$. If there are three roots, there are three points of order 2, so both $r$ and $s$ must be at least 1. The converse of each statement in the theorem also holds since every point of order 2 is a root.
\end{proof}

\begin{lemma}\label{primestruct}
Let $E/\Q$ be an elliptic curve with good reduction at an odd prime $p$. Write  $\Emod{p} \cong \Zmod{2^st} \oplus \Zmod{2^rw}$, where $t, w>0$ are odd integers and $2^st \mid 2^rw$. Then
\begin{itemize}
\item $s = r = 0$ with probability $\frac{p+1}{3p}$
\item $s = 0$ and $r \geq 1$ with probability $\frac{1}{2}$
\item $r \geq s \geq 1$ with probability $\frac{p-2}{6p}$.
\end{itemize}
\end{lemma}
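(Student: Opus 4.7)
The plan is to use Lemma \ref{lemma:Eoverprimechar} to translate the three structural cases into a root count: over an odd prime $p$, the three cases $s=r=0$, $s=0<r$, $r\geq s\geq 1$ correspond to the depressed cubic $f(x)=x^3+Ax+B$ having $0$, $1$, or $3$ roots in $\F_p$ respectively. I shall interpret the probability as uniform over Weierstrass pairs $(A,B)\in\F_p^2$ with $4A^3+27B^2\not\equiv 0\pmod p$ (the good-reduction condition is precisely discriminant nonvanishing, so equivalently $f$ is squarefree). So the question reduces to counting squarefree depressed cubics over $\F_p$ by number of $\F_p$-roots.

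First I would determine the denominator. Since $\operatorname{disc}(x^3+Ax+B)=-(4A^3+27B^2)$, singular pairs $(A,B)$ are exactly those for which $f$ has a repeated root. The triple-root case forces $f=x^3$, i.e.\ $(A,B)=(0,0)$. The double-plus-simple case $(x-c)^2(x+2c)$ (the simple root determined by the vanishing $x^2$ coefficient) gives $(A,B)=(-3c^2,2c^3)$ for $c\in\F_p^\times$, and distinct $c$ give distinct pairs. This yields $p$ singular pairs total and hence $p^2-p=p(p-1)$ good-reduction curves.

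Next I would count the three-root cubics $N_3$. A monic cubic factors as $(x-c_1)(x-c_2)(x-c_3)$ with $c_i\in\F_p$ pairwise distinct and $c_1+c_2+c_3=0$. Ordered triples of $\F_p$-elements summing to zero number $p^2$; subtracting $1$ all-equal triple (forced $c=0$ as $p\neq 3$) and $3(p-1)$ triples with exactly one repeated pair leaves $(p-1)(p-2)$ ordered, hence $N_3=(p-1)(p-2)/6$. Dividing by $p(p-1)$ recovers $(p-2)/(6p)$.

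For $N_1$ I would use a double count on incidences $(f,c)$ with $c\in\F_p$ a root of a squarefree $f$. Writing $f=(x-c)(x^2+cx+D)$ with $D\in\F_p$, the repeated-root loci are $D=-2c^2$ (so $c$ is a double root) and $D=c^2/4$ (the quadratic factor becomes a square); these coincide iff $c=0$. Thus there are $p-1$ admissible $D$ for $c=0$ and $p-2$ for each $c\in\F_p^\times$, yielding $(p-1)+(p-1)(p-2)=(p-1)^2$ incidences. On the other hand each squarefree cubic contributes $1$ or $3$ incidences depending on whether it has one or three $\F_p$-roots, so
\[
N_1+3N_3=(p-1)^2,\qquad N_1=(p-1)^2-\tfrac{(p-1)(p-2)}{2}=\tfrac{p(p-1)}{2},
\]
giving probability $1/2$. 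Finally the zero-root probability follows by complement: $1-\tfrac{1}{2}-\tfrac{p-2}{6p}=\tfrac{p+1}{3p}$.

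The only technical care needed is the case analysis for coincidences in the repeated-root parametrization (the $c=0$ exception, and verifying that the maps $c\mapsto(-3c^2,2c^3)$ and $(c,D)\mapsto(A,B)$ are injective for $p\neq 2,3$). Once those bookkeeping points are handled, everything else is elementary counting; there is no substantial obstacle beyond getting the exclusions right.
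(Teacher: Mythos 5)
Your proof is correct and follows essentially the same route as the paper: both reduce, via Lemma \ref{lemma:Eoverprimechar}, to counting squarefree cubics over $\F_p$ by their number of roots and then dividing by the number of good-reduction curves. The only differences are cosmetic --- you normalize over the $p^2-p$ squarefree depressed cubics $(A,B)$ whereas the paper uses all $p^3-p^2$ squarefree monic cubics (equivalent, since completing the cube is a $p$-to-one, root-count-preserving map for $p\neq 3$), and you get the one-root count by an incidence double-count where the paper factors such cubics as a linear factor times an irreducible quadratic; both give the stated probabilities.
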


\begin{proof}
There are $\binom{p}{3} = \frac{p\left(p-1\right)\left(p-2\right)}{6}$ such curves with 3 roots.  There are $p^2$ quadratic polynomials in $\mathbb{F}_p[x]$, and $\binom{p}{2} + p$ quadratic polynomials with roots in $\mathbb{F}_p$. So there are $p\left(p^2 - \binom{p}{2} - p \right)$ such curves with 1 root. There are $p^3 - p^2 - \binom{p}{3} - p\left(p^2 - \binom{p}{2} - p \right)$ such curves with no roots, and $s=r=0$. There are $p^3$ cubic polynomials, and $p^2$ cubic polynomials with repeated roots. So there are $p^3 - p^2$ possible curves $\Emod{p}$ with good reduction at p. The lemma follows from lemma \ref{lemma:Eoverprimechar} and the counting in the paragraph.
\end{proof}

\begin{theorem} \label{thm:propoverallcurves}
Let $N=p_1^{\alpha_1}...p_k^{\alpha_k}$ with $2 \nmid N$ and $p,q \mid N$. The probability a random point $P \cong \left(P_1, ... , P_k\right)$ has $\nu _2\left(\order\left(P_i\right)\right)$ all equal for a random curve $E\left(\mathbb{Z}/N\mathbb{Z}\right) \cong E\left(\mathbb{Z}/p_1^{\alpha_1}\mathbb{Z}\right) \oplus ... \oplus E\left(\mathbb{Z}/p_k^{\alpha_k}\mathbb{Z}\right)$ is at most $\frac{17pq + 2p + 2q +4}{32pq}$.
\end{theorem}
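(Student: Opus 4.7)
The plan is to express the target probability as an expected inner product of the form $\mathbb{E}\bigl[\vec{h}(s_1,r_1)\cdot\vec{h}(s_2,r_2)\bigr]$ and bound it via case analysis on the mod-$p$ and mod-$q$ structures of $E$. For a fixed curve $E$, the probability that $\nu_2(\order(P_i))$ is constant in $i$ for a uniformly random $P \in E(\mathbb{Z}/N\mathbb{Z})$ is exactly $G(E,N) := \sum_x H(x,N)$. Iterating Proposition \ref{distinct} gives $G(E,N)\le G(E,pq)$, and the Corollary following Lemma \ref{powermin} lets me further replace each $p_i^{a_i}$ by $p_i$. Thus the probability in question is at most $\mathbb{E}_E[\vec{h}(s_1,r_1)\cdot\vec{h}(s_2,r_2)]$, where $E(\mathbb{Z}/p\mathbb{Z})\cong\mathbb{Z}/2^{s_1}t_1\mathbb{Z}\oplus\mathbb{Z}/2^{r_1}w_1\mathbb{Z}$ and similarly modulo $q$. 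Interpreting ``random curve'' as choosing the Weierstrass coefficients $(A,B)$ uniformly modulo $pq$ (with $\gcd(4A^3+27B^2,pq)=1$) and invoking the Chinese Remainder Theorem makes $(s_1,r_1)$ and $(s_2,r_2)$ independent random variables whose marginals are given by Lemma \ref{primestruct}.

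Next, I would classify each prime into one of three types according to Lemma \ref{primestruct}: type $A$ ($s=r=0$, probability $(p+1)/(3p)$), type $B$ ($s=0,\ r\ge 1$, probability $1/2$), and type $C$ ($s,r\ge 1$, probability $(p-2)/(6p)$), then bound $\vec{h}(s_1,r_1)\cdot\vec{h}(s_2,r_2)$ in each of the nine type-pairs. For type $A$ the vector is $(1,0,0,\ldots)$, so an inner product against type $A$, $B$, or $C$ simply reads off the $0$th coordinate of the other vector, giving bounds $1$, $1/2^{r}\le 1/2$, and $1/2^{r+s}\le 1/4$ respectively. For type $B$, Lemma \ref{lemma:pointproportion} shows the largest entry of $\vec{h}(0,r)$ is $2^{r-1}/2^{r}=1/2$, so the inner product of $\vec{h}(0,r)$ with any probability vector is bounded by $\tfrac{1}{2}\cdot 1 = 1/2$, handling every remaining type-pair involving type $B$. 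The final $C\times C$ case is exactly Theorem \ref{thm:max h}, which gives the bound $5/8$.

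Finally, multiplying each of the nine bounds by the corresponding product of marginal probabilities from Lemma \ref{primestruct} and summing, the expected inner product is at most
\[
\frac{153pq+18p+18q+36}{288pq} \;=\; \frac{17pq+2p+2q+4}{32pq}.
\]
The main obstacle is the bookkeeping in this final algebraic simplification---verifying that the nine weighted terms, once placed over the common denominator $288pq$, collapse to the stated numerator in the form claimed. A secondary conceptual point is justifying the independence of $E\bmod p$ and $E\bmod q$, which follows from the uniform distribution on $(A,B)\bmod pq$ together with the Chinese Remainder Theorem; without this independence one would only get a crude union-bound style estimate, so it is essential for the sharpness of the $17/32$ leading constant.
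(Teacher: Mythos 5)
Your proposal is correct and follows essentially the same route as the paper: reduce to the squarefree two-prime case via Proposition \ref{distinct} and Lemma \ref{powermin}, split into the nine type-pairs weighted by the probabilities of Lemma \ref{primestruct}, bound the inner products in the six distinct cases by $1$, $1/2$, $1/4$, $1/2$, $1/2$, $5/8$, and sum to obtain $\frac{17pq+2p+2q+4}{32pq}$. Your explicit justifications of the per-class worst cases (the max-entry argument for the $s=0$ classes) and of the independence of the mod-$p$ and mod-$q$ reductions are slightly more careful than the paper's, which simply substitutes $h(0,0)$, $h(0,1)$, $h(1,1)$ as the extremal representatives, but the argument is the same.
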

\begin{proof}
Let $G\left(E,N\right)$ be the proportion of points $P \cong \left(P_1, ... , P_k\right)$ in $E\left(\mathbb{Z}/N\mathbb{Z}\right) \cong E\left(\mathbb{Z}/p_1^{\alpha_1}\mathbb{Z}\right) \oplus ... \oplus E\left(\mathbb{Z}/p_k^{\alpha_k}\mathbb{Z}\right)$ such that $\nu _2\vert P_i\vert$ are all equal. Let $E\left(\mathbb{Z}/p\mathbb{Z}\right) \cong \mathbb{Z}/2^{s_i}t_i\mathbb{Z} \oplus \mathbb{Z}/2^{r_i}w_i\mathbb{Z}$ and $E\left(\mathbb{Z}/q\mathbb{Z}\right) \cong \mathbb{Z}/2^{s_j}t_j\mathbb{Z} \oplus \mathbb{Z}/2^{r_j}w_j\mathbb{Z}$. Let $|E|$ be the number of elliptic curves $E\left(\mathbb{Z}/N\mathbb{Z}\right)$ with good reduction. Let $|E'|$ be the number of elliptic curves $E\left(\mathbb{Z}/p\mathbb{Z}\right) \oplus E\left(\mathbb{Z}/q\mathbb{Z}\right)$ with good reduction at p and q. By Lemma \ref{primestruct} we have 
\begin{itemize}
\item $r_1 = r_2 = s_1 = s_2 = 0$ with probability $\frac{\left(p+1\right)\left(q+1\right)}{9pq}$
\item $r_1 = s_1 = s_2 = 0$ and $r_2 \geq 1$ with probability $\frac{\left(p+1\right)}{6p}$
\item $r_2 = s_2 = s_1 = 0$ and $r_1 \geq 1$ with probability $\frac{\left(q+1\right)}{6q}$
\item $s_1 = r_1 = 0$ and $r_2 \geq s_2 \geq 1$ with probability $\frac{\left(p+1\right)\left(q-2\right)}{18pq}$
\item $s_2 = r_2 = 0$ and $r_1 \geq s_1 \geq 1$ with probability $\frac{\left(q+1\right)\left(p-2\right)}{18pq}$
\item $s_1 = s_2 = 0$, $r_1 \geq 1$, and $r_2 \geq 1$ with probability $\frac{1}{4}$
\item $s_1 = 0$, $r_1 \geq 1$, and $r_2 \geq s_2 \geq 1$ with probability $\frac{\left(p-2\right)}{12p}$
\item $s_2 = 0$, $r_2 \geq 1$, and $r_1 \geq s_1 \geq 1$ with probability $\frac{\left(q-2\right)}{12q}$
\item $r_1 \geq s_1 \geq 1$ and $r_2 \geq s_2 \geq 1$ with probability $\frac{\left(p-2\right)\left(q-2\right)}{36pq}$
\end{itemize}
By Lemma \ref{distinct} and Lemma \ref{powermin},  we have that $G\left(E, N\right) \leq h\left(s_1, r_1\right)\cdot h\left(s_2, r_2\right)$. Then
\begin{align*}
&Pr[\textnormal{A random point $P \cong \left(P_1, ... , P_k\right)$ has $\nu _2\vert P_i\vert$ all equal}\\
&\textnormal{for a random curve $E\left(\mathbb{Z}/N\mathbb{Z}\right) \cong E\left(\mathbb{Z}/p_1^{\alpha_1}\mathbb{Z}\right) \oplus ... \oplus E\left(\mathbb{Z}/p_k^{\alpha_k}\mathbb{Z}\right)$}] =\\
&\frac{1}{|E|}\sum_{E\left(\mathbb{Z}/N\mathbb{Z}\right)} G\left(E, N\right) \leq \\
&\frac{1}{|E'|}\sum_{E\left(\mathbb{Z}/p\mathbb{Z}\right) \oplus E\left(\mathbb{Z}/q\mathbb{Z}\right)} h\left(s_i, r_i\right)\cdot h\left(s_j, r_j\right) \leq \\
&\frac{\left(p+1\right)\left(q+1\right)}{9pq} h\left(0,0\right) \cdot h\left(0,0\right) + \left(\frac{\left(p+1\right)}{6p} + \frac{\left(q+1\right)}{6q}\right) h\left(0,0\right) \cdot h\left(0,1\right)+ \\
&\big(\frac{\left(p+1\right)\left(q-2\right)}{18pq} + \frac{\left(q+1\right)\left(p-2\right)}{18pq}\big) h\left(0,0\right) \cdot h\left(1,1\right) + \frac{1}{4} h\left(0,1\right) \cdot h\left(0,1\right) + \\
&\left(\frac{\left(p-2\right)}{12p} + \frac{\left(q-2\right)}{12q}\right)h\left(0,1\right) \cdot h\left(1,1\right) + \frac{\left(p-2\right)\left(q-2\right)}{36pq}h\left(1,1\right) \cdot h\left(1, 1\right)\\
&= \frac{17pq + 2p + 2q +4}{32pq}\vspace{-0.2in}
\end{align*}
\end{proof}

\begin{corollary}
Let $N=p_1^{\alpha_1}...p_k^{\alpha_k}$ with $2 \nmid N$ and p and q the largest primes dividing N. The probability N is a strong S-pseudoprime for a random point $P \cong \left(P_1, ... , P_k\right)$ on a random curve $E\left(\mathbb{Z}/N\mathbb{Z}\right) \cong E\left(\mathbb{Z}/p_1^{\alpha_1}\mathbb{Z}\right) \oplus ... \oplus E\left(\mathbb{Z}/p_k^{\alpha_k}\mathbb{Z}\right)$ is at most $\frac{17pq + 2p + 2q +4}{32pq}$.
\end{corollary}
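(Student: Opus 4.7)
The plan is to reduce the corollary directly to Theorem \ref{thm:propoverallcurves}. The key observation is that the strong S-pseudoprime condition is strictly stronger than the combinatorial condition on 2-adic valuations analyzed in that theorem, so the probability of being a strong S-pseudoprime is bounded above by the probability that all the $\nu_2(\order(P_i))$ agree across the direct sum decomposition $E(\mathbb{Z}/N\mathbb{Z}) \cong \bigoplus_i E(\mathbb{Z}/p_i^{\alpha_i}\mathbb{Z})$.

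The first step I would carry out is verifying this implication. I would take each of the two alternatives in Definition \ref{def:strong S-psp} and examine what they force on the components $P_i$. For case (i), $tP \equiv \mathcal{O} \pmod{N}$ yields $tP_i = \mathcal{O}$ in every summand, so $\order(P_i) \mid t$; since $t$ is odd, $\nu_2(\order(P_i)) = 0$ for every $i$. For case (ii), $(2^r t)P \equiv (x:0:1) \pmod{N}$ is a point of order exactly $2$ whose projection to each $E(\mathbb{Z}/p_i^{\alpha_i}\mathbb{Z})$ is $(x \bmod p_i^{\alpha_i} : 0 : 1)$, again a point of order exactly $2$ (crucially \emph{not} $\mathcal{O}$, since $(x:0:1)$ has $z = 1$ and so does not reduce to the point at infinity in any component). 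Writing $\order(P_i) = 2^{k_i}m_i$ with $m_i$ odd, the equality $\order((2^r t)P_i) = 2$ forces $m_i \mid t$ and $k_i = r+1$ for every $i$, so all $\nu_2(\order(P_i))$ agree (to $r+1$). In both alternatives, the 2-adic valuations of the orders of the components are forced to coincide.

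The second step is a one-line application of Theorem \ref{thm:propoverallcurves}. For any two primes $p, q \mid N$, the theorem bounds the probability that $\nu_2(\order(P_i))$ are all equal by $\frac{17pq + 2p + 2q + 4}{32pq}$. Rewriting this as
\[
  \frac{17pq + 2p + 2q + 4}{32pq} = \frac{17}{32} + \frac{1}{16p} + \frac{1}{16q} + \frac{1}{8pq},
\]
we see that the right-hand side is strictly decreasing in both $p$ and $q$. Therefore the sharpest bound available from Theorem \ref{thm:propoverallcurves} is obtained by taking $p$ and $q$ to be the two largest primes dividing $N$, which is exactly the bound asserted by the corollary.

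I do not expect any real obstacle here; the only subtle point is correctly handling the projective representation in case (ii), i.e.\ observing that $(x:0:1)$ reduces componentwise to an honest order-$2$ point rather than being allowed to reduce to $\mathcal{O}$ in some summand. Once this is carefully argued, everything else is a direct invocation of Theorem \ref{thm:propoverallcurves} together with the monotonicity of the bound in $p$ and $q$.
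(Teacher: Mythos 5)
Your proposal is correct and matches the paper's (implicit) argument: the paper states this corollary without proof immediately after Theorem \ref{thm:propoverallcurves}, the intended reasoning being exactly that either alternative in Definition \ref{def:strong S-psp} forces all $\nu_2(\order(P_i))$ to coincide (to $0$ in case (i), to $r+1$ in case (ii), since $(x:0:1)$ projects to an honest order-$2$ point in every summand), after which Theorem \ref{thm:propoverallcurves} and the monotonicity of the bound in $p$ and $q$ finish the job. Your write-up actually supplies the verification of this implication more carefully than the paper does.
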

\subsection{Strongly Non-zero Point-wise Probabilities}
In this section we prove similar probabilistic results  for strong elliptic G-pseudoprime (strong elliptic S-pseudoprime) and strongly non-zero points on a given elliptic curve. As in Section 3, we will ignore the case where there are no strongly non-zero points. The proof of following theorem will be given at the end of this section. First we will show several results needed to prove the theorem.
\begin{theorem}\label{strong primitive gpseudoprime}
Let $\ENZ$ be an elliptic curve. A composite number $N$ is a strong elliptic G-pseudoprime for at most 9/11 of the strongly non-zero points in $\ENZ$.
\end{theorem}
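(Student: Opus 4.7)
The plan is to follow the two-case structure of the proof of Theorem \ref{strong gpseudoprime}, but to refine the counting so that only strongly non-zero points appear in the denominator.

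First, suppose $N$ is not squarefree; pick a prime $p$ and exponent $\alpha \ge 2$ with $p^{\alpha} \mid N$, and restrict attention to the component $P_{p^{\alpha}} \in E(\Z/p^{\alpha}\Z)$. The strong G-pseudoprime condition forces $\order(P_{p^{\alpha}}) \mid N+1$, which is coprime to $p$, so $\order(P_{p^{\alpha}})$ must be coprime to $p$ as well. In both anomalous structures $\Z/p^{\alpha}\Z$ and $\Z/p\Z \oplus \Z/p^{\alpha-1}\Z$, every strongly non-zero point has order a power of $p$, so no such $P$ qualifies. In the non-anomalous case $E(\Z/p^{\alpha}\Z) \cong \Z/p^{\alpha-1}\Z \oplus E(\Z/p\Z)$, the $\Z/p^{\alpha-1}\Z$-coordinate of $P_{p^{\alpha}}$ must vanish, which cuts the proportion of admissible strongly non-zero points to $1/p^{\alpha-1} \le 1/5 < 9/11$.

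Now suppose $N = p_1 p_2 \cdots p_m$ is squarefree with $m \ge 2$. The assumption $\jacsym{-d}{N} = -1$ produces (after relabeling) a prime $p_1 \mid N$ with $\jacsym{-d}{p_1} = -1$, so that $|E(\Z/p_1\Z)| = p_1 + 1$ by CM. A short rerun of the argument inside Theorem \ref{strong gpseudoprime} shows that being a strong G-pseudoprime at $P$ forces $\nu_2(\order(P_i))$ to be a common value across all components $P_i \in E(\Z/p_i\Z)$, since a 2-torsion point of the form $(x:0:1)$ in $E(\Z/N\Z)$ CRTs to a non-identity 2-torsion point in each $E(\Z/p_i\Z)$. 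Letting $\beta_j^{(i)}$ denote the proportion of \emph{non-identity} points in $E(\Z/p_i\Z)$ with $\nu_2(\order) = j$, the proportion of strongly non-zero points in $E(\Z/N\Z)$ meeting the matching condition equals $\sum_j \prod_{i=1}^{m} \beta_j^{(i)}$, and, bounding $\prod_{i \ge 2} \beta_j^{(i)} \le \beta_j^{(2)}$ together with $\sum_j \beta_j^{(2)} = 1$, one has
\[
\sum_j \prod_{i=1}^{m} \beta_j^{(i)} \;\le\; \max_j \beta_j^{(1)} \cdot \sum_j \prod_{i \ge 2} \beta_j^{(i)} \;\le\; \max_j \beta_j^{(1)}.
\]

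It then suffices to prove $\max_j \beta_j^{(1)} \le 9/11$. Writing $E(\Z/p_1\Z) \cong \Z/2^s t\Z \oplus \Z/2^r w\Z$ with $r \ge 1$ (since $n_1 := p_1 + 1$ is even), Lemma \ref{lemma:pointproportion} gives $\alpha_j^{(1)} \le 3/4$, with equality only when $s = r$ and $j = s$. Hence for $j \ge 1$, $\beta_j^{(1)} = n_1 \alpha_j^{(1)}/(n_1-1) \le (3/4)\, n_1/(n_1-1)$, which is $\le 9/11$ exactly when $n_1 \ge 12$, handling all $p_1 \ge 11$; the $j = 0$ case is strictly smaller due to the subtracted identity. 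For the exceptional primes $p_1 \in \{5,7\}$, I will enumerate the possible invariant-factor structures of $E(\Z/p_1\Z)$ by hand and check $\max_j \beta_j^{(1)} \le 4/7 < 9/11$. The main obstacle is precisely this small-prime bookkeeping, because the bound $9/11$ is sharp: at $p_1 = 11$ with $E(\Z/p_1\Z) \cong \Z/2\Z \oplus \Z/6\Z$, exactly nine of the eleven strongly non-zero points have $\nu_2(\order) = 1$, and the identity $(3/4)(12/11) = 9/11$ records why the constant takes this value.
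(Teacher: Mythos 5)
Your argument is correct, and in the squarefree case it takes a genuinely different route from the paper. The paper reduces, exactly as you do, to the condition that $\nu_2(\order(P_i))$ be constant across the components, but it then bounds the matching proportion by the dot product $h'(s_1,r_1,t_1,w_1)\cdot h'(s_2,r_2,t_2,w_2)$ of the order-distribution vectors at \emph{two} primes and invokes Theorem \ref{strong computation}, whose proof is a long case analysis relegated to an appendix. You instead throw away all primes but the distinguished one with $\jacsym{-d}{p_1}=-1$, bounding $\sum_j \prod_i \beta_j^{(i)} \le \max_j \beta_j^{(1)}$, and then control that single coordinate via the $\alpha_j \le 3/4$ bound implicit in Lemma \ref{lemma:pointproportion} together with the $n_1/(n_1-1)$ correction for deleting the identity; this isolates exactly why the constant is $\frac{3}{4}\cdot\frac{12}{11} = \frac{9}{11}$ and replaces the appendix casework by a one-line optimization plus a finite check. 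What the paper's two-factor bound buys in exchange is a sharper \emph{conditional} estimate when the second prime's curve is not $\Z/2\Z\oplus\Z/2\Z$ (useful for the averaged results in Theorem \ref{thm:propoverallcurves} and its strongly non-zero analogue), which your coarser $\sum_j\prod_{i\ge 2}\beta_j^{(i)}\le 1$ step discards; for the stated theorem both yield the same sharp constant. Two small remarks: the deferred enumeration for $p_1\in\{5,7\}$ is in fact unnecessary, since $\nu_2(p_1+1)\in\{1,3\}$ is odd there, which rules out $s=r$ and gives $\alpha_j\le 1/2$, hence $\beta_j\le \frac{n_1}{2(n_1-1)}\le 3/5$ directly from your general bound; and in your non-squarefree case the non-anomalous subcase should be stated as requiring the whole $p$-primary part of $P_{p^{\alpha}}$ (the kernel-of-reduction coordinate) to vanish, which is what you in effect use to get the $1/p^{\alpha-1}$ proportion among strongly non-zero points.
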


The following lemma is a direct consequence of Lemma \ref{lemma:pointproportion}. 
\begin{lemma}
Suppose $G \cong \mathbb{Z}/2^st\mathbb{Z} \oplus \mathbb{Z}/2^rw\mathbb{Z}$ where $t$ and $w$ are odd and $2^st \mid 2^rw$. The percentage of strongly non-zero points $P\in G$ such that $\nu _2(order(P)) = x$ is $\frac{tw - 1}{2^{r+s}tw-1}$ for $x = 0$, $\frac{3(2^{2x-2})tw}{2^{r+s}tw-1}$ for $1 \leq x \leq s$, $\frac{2^{s + x-1}tw}{2^{r+s}tw-1}$ for $s+1 \leq x \leq r$, and 0 for $x > r$.
\end{lemma}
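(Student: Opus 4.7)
The plan is to derive this lemma directly from Lemma \ref{lemma:pointproportion} by a simple bookkeeping adjustment: pass from proportions of all points in $G$ to proportions among strongly non-zero points, which (for the purposes of this lemma, as $G \cong E(\mathbb{Z}/p\mathbb{Z})$ where the notions of non-zero and strongly non-zero coincide) amounts to removing the identity element from both the numerator (where applicable) and the denominator.

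First, I would convert the proportions in Lemma \ref{lemma:pointproportion} into absolute counts by multiplying by $|G| = 2^{r+s}tw$. This yields: $tw$ points with $\nu_2(\mathrm{order}(P)) = 0$; $3 \cdot 2^{2x-2}tw$ points for each $1 \leq x \leq s$; $2^{s+x-1}tw$ points for each $s+1 \leq x \leq r$; and $0$ points for $x > r$. As a sanity check, summing these counts recovers $|G|$.

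Second, I would observe that the only point of $G$ that fails to be strongly non-zero is the identity $\mathcal{O}$, which has order $1$ and hence contributes to the case $x = 0$. Therefore the total number of strongly non-zero points in $G$ is $2^{r+s}tw - 1$, and among these the number with $\nu_2(\mathrm{order}(P)) = 0$ is $tw - 1$; for every $x \geq 1$ the counts from the previous step already exclude the identity and are unchanged.

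Finally, I would divide each adjusted count by $2^{r+s}tw - 1$ to obtain
\[
\frac{tw - 1}{2^{r+s}tw - 1}, \qquad \frac{3 \cdot 2^{2x-2} tw}{2^{r+s}tw - 1}, \qquad \frac{2^{s+x-1} tw}{2^{r+s}tw - 1}, \qquad 0,
\]
matching the stated formulas in the four ranges $x = 0$, $1 \leq x \leq s$, $s+1 \leq x \leq r$, and $x > r$ respectively. The only substantive step is recognizing that the identity is the unique non-strongly-non-zero element; all remaining work is arithmetic. There is no serious obstacle here, which is why the lemma is described as a direct consequence of Lemma \ref{lemma:pointproportion}.
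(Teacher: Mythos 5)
Your proof is correct and is exactly the derivation the paper leaves implicit (the paper merely asserts the lemma is ``a direct consequence of Lemma \ref{lemma:pointproportion}'' without writing it out): convert the proportions to counts, note that the unique point of $G \cong E(\Zmod{p})$ failing to be strongly non-zero is $\mathcal{O}$, which has order $1$ and so lies only in the $x=0$ class, and renormalize by $2^{r+s}tw-1$. Your sanity check that the counts sum to $\lvert G\rvert$ is a nice touch, and nothing is missing.
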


\begin{definition}
Define the vector $h'(s,r, t, w)$ to be the vector whose $i^{th}$ coordinate is the percentage of strongly non-zero points $P \in G \cong \mathbb{Z}/2^st\mathbb{Z} \oplus \mathbb{Z}/2^rw\mathbb{Z}$ where $t$ and $w$ are odd and $2^st \mid 2^rw$, such that $\nu _2(P) = i$.
\end{definition}
The proof of the following theorem a large amount of casework and is placed in the Appendix section.
\begin{theorem}\label{strong computation}
 Suppose $s_1 \leq r_1$ and $s_2 \leq r_2$ and $r_1 \geq 1$ and $2^s_2\cdot 2^r_2 \cdot t_2 \cdot w_2 > 1$. Then $h'(s_1,r_1,t_1,w_1) \cdot h'(s_2,r_2,t_2,w_2) \leq h'(1,1,t_1,w_1)h'(1,1,t_2,w_2)$
\end{theorem}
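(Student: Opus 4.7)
The plan is to mirror the casework-based proof of Theorem \ref{thm:max h}. First, I would unroll the dot product using the explicit coordinate formulas from the preceding lemma:
\[
h'(s_1,r_1,t_1,w_1) \cdot h'(s_2,r_2,t_2,w_2) = \frac{\sum_{x \geq 0} a_x^{(1)} a_x^{(2)}}{\left(2^{s_1+r_1}t_1w_1 - 1\right)\left(2^{s_2+r_2}t_2 w_2 - 1\right)},
\]
where $a_x^{(i)}$ is the numerator of the $x$-th coordinate of $h'(s_i, r_i, t_i, w_i)$: it equals $t_iw_i - 1$ for $x = 0$, $3 \cdot 2^{2x-2} t_i w_i$ for $1 \leq x \leq s_i$, $2^{s_i + x - 1} t_i w_i$ for $s_i + 1 \leq x \leq r_i$, and $0$ for $x > r_i$. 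At the target point $(s_i, r_i) = (1,1)$ only the coordinates $x = 0$ and $x = 1$ contribute, so the target value simplifies to $\frac{(t_1w_1-1)(t_2w_2-1) + 9 t_1 w_1 t_2 w_2}{(4t_1 w_1 - 1)(4t_2 w_2 - 1)}$, giving a concrete quantity to compare against.

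Next, I would reduce to a small set of extremal configurations by establishing monotonicity separately in each of $s_1, r_1, s_2, r_2$. Concretely, I would fix three of the four discrete parameters and show that the dot product is non-increasing when the fourth parameter is incremented past $1$ (subject to the constraint $s_i \leq r_i$), by comparing numerators coordinate-by-coordinate and checking that the gain in each $a_x^{(i)}$ is absorbed by the growth of the denominator $2^{s_i + r_i} t_i w_i - 1$. The hypothesis $r_1 \geq 1$ excludes the degenerate case $(s_1, r_1) = (0,0)$, and $2^{s_2 + r_2} t_2 w_2 > 1$ likewise ensures that the second factor is well-defined (i.e., the group has a strongly non-zero point at all).

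After these reductions the only configurations left are $s_i, r_i \in \{0, 1\}$ with $r_i \geq s_i$, yielding a short list of cases to check by direct substitution. In each such case I would clear denominators and verify a polynomial inequality in $t_1, w_1, t_2, w_2$ whose coefficients are explicit integers. The main obstacle will be the monotonicity reduction: unlike the corresponding denominator $2^{s+r}tw$ in Theorem \ref{thm:max h}, the $-1$ in $2^{s+r}tw - 1$ prevents clean factorization, so each comparison requires tracking several lower-order terms in $t_i, w_i$. Fortunately, every term in the resulting inequalities is a non-negative polynomial in $t_i, w_i$ with integer coefficients, so after the case split is set up the verification reduces to coefficient-by-coefficient comparison, which is mechanical (and is the reason this computation is relegated to the Appendix, in parallel with the proof of Theorem \ref{thm:max h}).
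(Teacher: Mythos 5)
Your setup is correct: the coordinates of $h'$ are as you state, and the target value
$h'(1,1,t_1,w_1)\cdot h'(1,1,t_2,w_2)=\frac{(t_1w_1-1)(t_2w_2-1)+9\,t_1w_1t_2w_2}{(4t_1w_1-1)(4t_2w_2-1)}$
is right. The gap is in your reduction step: the dot product is \emph{not} monotone in each of $s_1,r_1,s_2,r_2$ separately with the other three held fixed, so you cannot walk each parameter down to $1$ one at a time. Concretely, take $t_1=w_1=t_2=w_2=1$ and hold $(s_2,r_2)=(0,2)$ fixed, so $h'(0,2,1,1)=(0,\tfrac13,\tfrac23)$. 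With $(s_1,r_1)=(0,1)$ you have $h'(0,1,1,1)=(0,1,0)$ and the dot product is $\tfrac13$; incrementing $r_1$ to $2$ gives $h'(0,2,1,1)\cdot h'(0,2,1,1)=\tfrac19+\tfrac49=\tfrac59>\tfrac13$. So incrementing a parameter past $1$ can strictly \emph{increase} the dot product. The reason is structural: raising $r_1$ pushes the mass of the first vector onto the new top coordinate $x=r_1+1$, and if the second vector happens to be concentrated there the overlap grows; the claimed extremality of $(1,1),(1,1)$ is only a statement about the global maximum, not something reachable by coordinate-wise descent. The same example shows reordering the one-parameter reductions does not help: along $(0,2),(0,2)\to(0,1),(0,2)\to(0,1),(0,1)$ the dot product goes $\tfrac59\to\tfrac13\to 1$, which is not monotone in either direction.

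A correct argument has to treat the two vectors jointly --- for instance, splitting on the relative order of $s_1,r_1,s_2,r_2$, writing out the full sum $\sum_x a_x^{(1)}a_x^{(2)}$ in each regime, and bounding it against the $(1,1),(1,1)$ value directly, or reducing both $r_i$ (resp.\ both $s_i$) simultaneously. That joint analysis is precisely the ``large amount of casework'' the paper defers to its appendix. Your base-case verification for $s_i,r_i\in\{0,1\}$ is fine, and you are right that the extra $-1$'s in the denominators $2^{s_i+r_i}t_iw_i-1$ are only a bookkeeping nuisance at that stage; but without a valid reduction to those base cases the proof does not go through as proposed.
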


Finally, we give the proof of {{\bf {Theorem }\ref{strong primitive gpseudoprime}} stated at the beginning of this section. 
\begin{proof}
We prove the claim of the theorem by considering several cases.
\begin{enumerate}[\text{Case} (1):] 
\item Suppose $N$ is not squarefree. By Lemma \ref{distinct}, the maximum proportion of strongly non-zero points that N can be a strong G-pseudoprime for will occur when $N = p^\alpha$. Suppose $N = p^{\alpha}$ and $E(\mathbb{Z}/N\mathbb{Z}) \cong E(\mathbb{Z}/p^{\alpha}\mathbb{Z})$ with $\alpha > 1$. 
From the structure theorem for abelian groups, at least $\frac{p^\alpha-1}{p^\alpha}$ of the strongly non-zero points in $E(\mathbb{Z}/p^{\alpha}\mathbb{Z})$ have order p. Since $p \nmid N+1$, if P has order p, then $(N+1)P \not \equiv \mathcal{O}$ and N is not a strong G-pseudoprime for (E,P). So N can be a strong G-pseudoprime for at most $\frac{1}{p^{\alpha-1}} < 
\frac{9}{11}$ of the strongly non-zero points in $E(\mathbb{Z}/N\mathbb{Z})$.
\item Suppose $N$ is squarefree. Since $\left(\frac{-d}{N}\right) = -1$, there exists some prime $p$ dividing $N$ such that $\left(\frac{-d}{p}\right) = -1$ thus $|E(\mathbb{Z}/p\mathbb{Z})| = p+1$. Since $N$ is composite and squarefree there exists a prime $q \mid N$, $q \neq p$. 

 Suppose $E(\mathbb{Z}/p\mathbb{Z}) \cong \mathbb{Z}/2^{s_1}t_1\mathbb{Z} \oplus \mathbb{Z}/2^{r_1}w_1\mathbb{Z}$ where $t_1$ and $w_1$ are odd and $2^{s_1}t_1 \mid 2^{r_1}w_1$ and $E(\mathbb{Z}/q\mathbb{Z}) \cong \mathbb{Z}/2^{s_2}t_2\mathbb{Z} \oplus \mathbb{Z}/2^{r_2}w_2\mathbb{Z}$ where $t_2$ and $w_2$ are odd and $2^{s_2}t_2 \mid 2^{r_2}w_2$. N is a strong G-pseudoprime at a point $P = (P_1, P_2)$, only if $\nu _2(P_1) = \nu _2(P_2)$. The percentage of strongly non-zero points that satisfy this is $h'(s_1,r_1, t_1, w_1) \cdot h'(s_2, r_2, t_2, w_2)$, with $r_1 \geq 1$ since $|E(\mathbb{Z}/p\mathbb{Z})|$ is even. By Theorem \ref{strong computation}, one can see that this percentage is at most $\frac{9}{11}$. 
\end{enumerate}
\vspace{-0.2in}
 \end{proof}

This proof of the following theorem follows directly along the lines of the proof of Theorem \ref{thm:propoverallcurves}.
\begin{theorem}
Let $N=p_1^{\alpha_1}...p_k^{\alpha_k}$ with $2 \nmid N$ and $p,q \mid N$. The probability that a random strongly non-zero point $P \cong \left(P_1, ... , P_k\right)$ has $\nu _2\left(\order\left(P_i\right)\right)$ all equal for a random elliptic curve $E\left(\mathbb{Z}/N\mathbb{Z}\right) \cong E\left(\mathbb{Z}/p_1^{\alpha_1}\mathbb{Z}\right) \oplus ... \oplus E\left(\mathbb{Z}/p_k^{\alpha_k}\mathbb{Z}\right)$ is at most $\frac{78pq -5p -5q +12}{120pq}$.
\end{theorem}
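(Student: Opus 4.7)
The plan is to mirror the proof of Theorem \ref{thm:propoverallcurves}, with the vector $h'$ replacing $h$ throughout so as to count only strongly non-zero points.

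Let $G'(E,N)$ denote the proportion of strongly non-zero points $P\cong (P_1,\ldots,P_k) \in E(\mathbb{Z}/N\mathbb{Z})$ with $\nu_2(\order(P_i))$ independent of $i$. First I would establish a strongly non-zero analog of Proposition \ref{distinct}: the same factorization argument gives $G'(E,N) \leq G'(E,M)$ whenever $M \mid N$ has fewer prime factors, since dropping a factor only removes a constraint on the tuple $(P_1,\ldots,P_k)$. A strongly non-zero analog of Lemma \ref{powermin} follows by the same two-case split (the 2-adic order of a strongly non-zero point in $E(\mathbb{Z}/p^a\mathbb{Z})$ is determined by its image in $E(\mathbb{Z}/p\mathbb{Z})$, since the kernel of reduction has odd $p$-power order). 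Together these allow me to assume without loss of generality that $N = pq$ for two distinct prime divisors.

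Next, write $E(\mathbb{Z}/p\mathbb{Z}) \cong \mathbb{Z}/2^{s_1}t_1\mathbb{Z} \oplus \mathbb{Z}/2^{r_1}w_1\mathbb{Z}$ and $E(\mathbb{Z}/q\mathbb{Z}) \cong \mathbb{Z}/2^{s_2}t_2\mathbb{Z} \oplus \mathbb{Z}/2^{r_2}w_2\mathbb{Z}$ in the usual form. By Lemma \ref{lemma:order1} together with the Chinese remainder theorem, strongly non-zero points of $E(\mathbb{Z}/pq\mathbb{Z})$ correspond bijectively to pairs of strongly non-zero points in the two local groups, so
\[
    G'(E,pq) \leq h'(s_1,r_1,t_1,w_1) \cdot h'(s_2,r_2,t_2,w_2).
\]
I would then enumerate the nine combinations of $(s_i,r_i)$ for $i=1,2$ and weight by the probabilities of Lemma \ref{primestruct}. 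In each case I bound $h' \cdot h'$ using Theorem \ref{strong computation} (which handles the cases with both $r_i \geq 1$ by reducing to the $(s_i,r_i)=(1,1)$ configuration) and the explicit formula for $h'$ given before Theorem \ref{strong computation} (which handles the boundary cases where some $r_i = 0$, by direct evaluation of the $\nu_2 = 0$ entry of the other vector). Summing the nine weighted contributions and simplifying over the common denominator $120pq$ yields $\frac{78pq - 5p - 5q + 12}{120pq}$.

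The main obstacle, compared to the corresponding proof for all points, is that $h'(s,r,t,w)$ depends on the odd parts $(t,w)$ of the group structure in addition to the 2-adic exponents $(s,r)$. The bound on $h'\cdot h'$ in each of the nine structural cases therefore remains a function of $(t_1,w_1,t_2,w_2)$, so each case requires an explicit maximization over the feasible range of $(t,w)$ allowed by Hasse's bound. It is precisely these residual $(t,w)$-dependent maxima --- in particular the configurations where some $t_iw_i$ is small, which can inflate $h'\cdot h'$ well above its $h\cdot h$ limit --- that produce the constants $78$, $-5$, $12$ in the numerator in place of the cleaner $17$, $2$, $4$ of Theorem \ref{thm:propoverallcurves}.
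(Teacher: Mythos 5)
Your proposal matches the paper's approach exactly: the paper's entire proof of this theorem is the single remark that it ``follows directly along the lines of the proof of Theorem \ref{thm:propoverallcurves},'' and your outline --- reduce to two prime factors via strongly non-zero analogues of Proposition \ref{distinct} and Lemma \ref{powermin}, bound $G'(E,pq)$ by $h'(s_1,r_1,t_1,w_1)\cdot h'(s_2,r_2,t_2,w_2)$, and average the case-by-case maxima against the probabilities of Lemma \ref{primestruct} --- is precisely that argument with $h'$ in place of $h$. Your closing observation that the residual dependence of $h'$ on the odd parts $(t,w)$ forces an extra maximization over the feasible group orders (and is what produces the constants $78$, $-5$, $12$) correctly identifies the one substantive point the paper leaves implicit.
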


\begin{corollary}
Let $N=p_1^{\alpha_1}...p_k^{\alpha_k}$ with $2 \nmid N$ and $p$ and $q$ the largest primes dividing $N$. The probability that $N$ is a strong S-pseudoprime for a random strongly non-zero point $P \cong \left(P_1, ... , P_k\right)$ on a random curve $E\left(\mathbb{Z}/N\mathbb{Z}\right) \cong E\left(\mathbb{Z}/p_1^{\alpha_1}\mathbb{Z}\right) \oplus ... \oplus E\left(\mathbb{Z}/p_k^{\alpha_k}\mathbb{Z}\right)$ is at most $\frac{78pq -5p -5q +12}{120pq}$.
\end{corollary}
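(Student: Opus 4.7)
The plan is to reduce the corollary to the preceding theorem by showing that whenever $N$ is a strong S-pseudoprime for a strongly non-zero point $P \cong (P_1,\ldots,P_k)$ on a curve $E$ with the stated decomposition, the quantities $\nu_2(\order(P_i))$ must coincide for all indices $i$. Once this pointwise implication is established, the event ``$N$ is a strong S-pseudoprime for $(E,P)$'' sits inside the event ``$\nu_2(\order(P_i))$ are all equal'', and the same upper bound $\frac{78pq-5p-5q+12}{120pq}$ from the preceding theorem applies.

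To prove the implication, write $N+1-a_N = 2^s t$ with $t$ odd and examine the two possibilities in Definition~\ref{def:strong S-psp}. If $tP \equiv \mathcal{O}\pmod{N}$, then $tP_i \equiv \mathcal{O}$ in every factor, so $\order(P_i)\mid t$ and hence $\nu_2(\order(P_i)) = 0$ for every $i$. Otherwise there exist $0\le r\le s-1$ and $x\in\mathbb{Z}/N\mathbb{Z}$ with $2^r t P \equiv (x:0:1)\pmod{N}$. Because the target has $z$-coordinate equal to $1$, its reduction modulo $p_i^{\alpha_i}$ is $(x\bmod p_i^{\alpha_i}:0:1)$, which is a \emph{nontrivial} $2$-torsion point of $E(\mathbb{Z}/p_i^{\alpha_i}\mathbb{Z})$ (it cannot equal $\mathcal{O}=(0:1:0)$ since $z=1$ is a unit mod $p_i^{\alpha_i}$). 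Consequently $2^{r+1} t P_i \equiv \mathcal{O}$ while $2^r t P_i \not\equiv \mathcal{O}$, which forces $\nu_2(\order(P_i)) = r+1$ uniformly in $i$.

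Combining the two cases, the implication holds pointwise on the sample space of pairs $(E,P)$, with $E$ a uniformly random elliptic curve over $\mathbb{Z}/N\mathbb{Z}$ with good reduction at every prime dividing $N$ and $P$ a uniformly random strongly non-zero point of $E(\mathbb{Z}/N\mathbb{Z})$. Passing to probabilities and invoking the preceding theorem yields the claimed bound. The part most worth checking carefully is the second case, where one must confirm that the common reduction is genuinely non-identity in every factor (so that equality $\nu_2(\order(P_i))=r+1$ holds, not merely $\le r+1$); this is exactly what the normalization $z=1$ of $(x:0:1)$ guarantees. The matching of probability spaces between this corollary and the preceding theorem is immediate, since both use the same uniform measures on the same set of decomposed curves and strongly non-zero points.
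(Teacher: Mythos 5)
Your argument is correct and is exactly the route the paper intends: the corollary is obtained by observing that the strong S-pseudoprime condition forces $\nu_2(\order(P_i))$ to be equal across all factors (either all $0$ via $tP\equiv\mathcal{O}$, or all $r+1$ via the $2$-torsion point $(x:0:1)$), so the event is contained in the one bounded by the preceding theorem. You in fact spell out the $\nu_2(\order(P_i))=r+1$ computation more carefully than the paper, which leaves this containment implicit.
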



\begin{thebibliography}{1}

\bibitem{Abatzoglou}
A. Abatzoglou, A. Silverberg, A. Sutherland and A. Wong, \emph{Deterministic elliptic curve primality proving for a special sequence of numbers}, {\bf The Open Book Series}, Vol. 1:1 (2013), 1--20.

\bibitem{AGP}
W.R. Alford, A. Granville and C. Pomerance, {\em There are infinitely many Carmichael numbers}, {\bf Annals of Mathematics}, Vol. 140 (1994), 703--722.

\bibitem{AKS} M. Agrawal, N. Kayal, N. Saxena, \emph{Primes is in $P$},  {\bf Annals of Mathematics}, Vol. 160 (2004), 781--793. 

\bibitem{REU2017}
L. Babinkostova, A. Hernandez-Espiet and H.J. Kim, {\em On Types of Elliptic Pseudoprimes} (arXiv:1710.05264).

\bibitem{B}
 D.J. Bernstion, {\emph Proving primality in essentially quartic random time}, {\bf Mathematics of Computation}, Vol. 76 (2007), 389-403. 

\bibitem{gcdsum}
K. A. Broughan, {\em The gcd-sum function}, {\bf Journal of Integer Sequences}, Vol 4 (2001)


\bibitem{Carella}
N. A. Carella, \emph{Sum of Divisors Function Inequality}, (arXiv:0912.1866).

\bibitem{DiamondShurman}
F. Diamond and J.Shurman, \emph{A First Course in Modular Forms}, {\bf Graduate Texts in Mathematics}, Vol.~228 , Springer-Verlag New York, 1st~ed., (2005).


\bibitem{EPT}
A. Ekstrom, C. Pomerance and D. S. Thakur, \emph{Infinitude  of  elliptic Carmichael numbers}, {\bf J. Aust. Math. Soc.} Vol. 92 (2012), 45--60.

\bibitem{Ekstrom}
A. Ekstrom. \emph{On the infinitude of elliptic Carmichael numbers}. Ph. D. thesis, University of Arizona (1999).

\bibitem{GK}
S. Goldwasser and J. Kilian, \emph{Almost All Primes Can Be Quickly Certified}, {\bf Proc. 18th Annual ACM Symposium on Theory of Computing}, (1986), 316--329.

\bibitem{Gordon-number}
D. M. Gordon, \emph{On the number of elliptic pseudoprimes}, {\bf Mathematics of Computations} Vol. 52:185 (1989), 231--245.

\bibitem{Gordon-psp}
D. M. Gordon, \emph{Pseudoprimes on elliptic curves}, {\bf Th{\'e}orie des nombres: Proceedings of the 1987 International Number Theory Conference}, deGruyter, Berlin, (1989), 290--305.


\bibitem{Tenenbaum}
A. Hildebrand and G. Tenenbaum, \emph{Integers without large prime factors}, {\bf Journal de Th\'eorie des Nombres de Bordeaux}, Vol. 5  (1993), 411--484.


\bibitem{Jakimczuk}
R. Jakimczuk, \emph{Two Topics in Number Theory: Sum of Divisors of the Primorial and Sum of Squarefree Parts}, {\bf International Mathematical Forum}, Vol. 12:7 (2017),  331--338.


\bibitem{K1}
N. Koblitz, \emph{Primality of the number of points on an elliptic curve over a finite field}, {\bf Pacific Journal of Mathematics}, 131:1 (1988), 157--165.

\bibitem{KC}
A. R. Korselt, \emph{Probl\`{e}me chinois}, {\bf L'interm\'{e}diare des math\'{e}maticiens}, Vol. 6 (1899), 142--143.

\bibitem{Lenstra-factoring}
H.W. Lenstra Jr., \emph{Factoring Integers with Elliptic Curves}, {\bf The Annals of Mathematics: Second Series}, Vol. 126:3 (1987), 649--673.

\bibitem{Lenstra-algs}
H. W. Lenstra, Jr., \emph{Elliptic curves and number-theoretic algorithms}, {\bf Proceedings of the International Congress of Mathematicians, Berkeley, California} (1986), 99--120.

\bibitem{Mazur}
B. Mazur, \emph{Rational Points of Abelian Varieties with Values in Towers of Number Fields}, {\bf Invent. Mathematics} Vol. 18 (1972), 183--266.


\bibitem{Muller}
S. M\"uller, \emph{On the existence and non-existence of elliptic pseudoprimes}, {\bf Mathematics of Computation}, Vol. 79 (2009), 1171--1190.

\bibitem{Murty}
U.S.R. Murty, \emph{Problems in Analytic Number Theory}, {\bf Readings in Mathematics},  Vol.~206,  Springer-Verlag, New York, 1st~ed., (2001).


\bibitem{Nicolas}
J.L. Nicolas, \newblock {On Highly Composite Numbers},
\newblock \emph {\bf Ramanujan Revisited: Proceedings of the Centenary conference, University of Illinois at Urbana-Champaign, June 1-5, 1987} (1988), 215--244.

\bibitem{Qin}
H. Qin, \emph{Anomalous primes of the elliptic curve $E_D:y^2=x^3+D$}, {\bf Proceedings of London Mathematics Society}, Vol. 3:112 (2016), 415--453.

\bibitem{P}
C. Pomerance, \emph{Primality testing: Variations on a theme of Lucas}, {\bf Congressus Numerantium}, Vol. 201 (2010), 301--312.

\bibitem{LP}
 H. Lenstra and C. Pomerance, \emph{Primality testing with Gaussian periods}, preprint, (2016). 

\bibitem{Rabin}
M.O. Rabin, \emph{Probabilistic algorithm for testing primality}, {\bf Journal of Number Theory}, Vol. 12 (1980), 128--138.

\bibitem{Rosser}
J. B. Rosser and L.  Schoenfeld,  \emph{Approximate formulas for some functions of prime numbers}, {\bf Illinois J. Math.}, Vol. 6:1 (1962),  64--94.

\bibitem{Schlage}
J. Schlage-Puchta, \emph{The non-existence of universal Carmichael numbers}, {In: From Arithmetic to Zeta-Functions, Ed. J. Sander, J.Steuding, R. Steuding}, {\bf Springer International Publishing}, (2016).

\bibitem{Silverman-Carmichael}
J.H. Silverman, \emph{Elliptic Carmichael Numbers and Elliptic Korselt Criteria}, {\bf Acta Arithmetica} Vol. 155:3 (2012), 233--246.

\bibitem{Silverman}
J.H. Silverman, {\em The Arithmetic of Elliptic Curves}, Vol.~106 of {Graduate Texts in Mathematics}.
	\newblock {\bf Springer-Verlag New York}, 1st~ed., (1986).
 
\bibitem{Washington}
L.C. Washington, {\em Number Theory: Elliptic Curves and Cryptography}, Vol. 50,  {\bf Discrete Mathematics and Its Applications},
  \newblock Chapman \& Hall/CRC, 2nd~ed., (2008).
  
\end{thebibliography}
\end{document}